%%%%%%%%%%%%%%%%%%%%%%%%%%%%%%%%%%%%%%%%%%%%%%%%%%%%%%%%%%%%%%%%%%%%%%%%%%%%
%% Author template for Operations Research (opre) for articles with no e-companion (EC)
%% Mirko Janc, Ph.D., INFORMS, mirko.janc@informs.org
%% ver. 0.95, December 2010
%%%%%%%%%%%%%%%%%%%%%%%%%%%%%%%%%%%%%%%%%%%%%%%%%%%%%%%%%%%%%%%%%%%%%%%%%%%%
%\documentclass[opre,blindrev]{informs3}
\documentclass[letterpaper,11 pt]{article}
%\documentclass[opre,nonblindrev]{informs3} % current default for manuscript submission`

%\OneAndAHalfSpacedXI % current default line spacing
%%\OneAndAHalfSpacedXII
%%\DoubleSpacedXII
%%\DoubleSpacedXI

% If hyperref is used, dvi-to-ps driver of choice must be declared as
%   an additional option to the \documentclass. For example
%\documentclass[dvips,opre]{informs3}      % if dvips is used
%\documentclass[dvipsone,opre]{informs3}   % if dvipsone is used, etc.

%%% OPRE uses endnotes. If you do not use them, put a percent sign before
%%% the \theendnotes command. This template does show how to use them.
%\usepackage{endnotes}
\usepackage{subfig}
\usepackage{algorithmic}
\usepackage{pifont}
\newcommand{\cmark}{\ding{51}}%
\newcommand{\xmark}{\ding{55}}%

\usepackage{tabularx}

\usepackage{boxedminipage}
\usepackage[justification=centering]{caption}
\def\bko{{\rm 1\kern-.17em l}}
\usepackage{wrapfig}
\usepackage{lipsum}\usepackage{subfig}
\usepackage{algorithmic}
\usepackage{boxedminipage}
\usepackage[justification=centering]{caption}
\def\bko{{\rm 1\kern-.17em l}}
\usepackage{wrapfig}
\usepackage{lipsum}
\usepackage{amsfonts,amsmath,graphicx,fullpage,bbm,amsthm,graphics}
\usepackage{amssymb,multirow,verbatim}
\usepackage{acronym,wrapfig,plain,mathrsfs,enumerate,relsize,color}
\newtheorem{theorem}{Theorem}

\newtheorem{assumption}{Assumption}

\newtheorem{algorithm}{Algorithm}

\newtheorem{lemma}{Lemma}

\newtheorem{proposition}{Proposition}
\newtheorem{remark}{Remark}

\usepackage{algorithm}
\usepackage{subfig}
\usepackage{algorithmic}
\usepackage{boxedminipage}
\usepackage[justification=centering]{caption}
\def\bko{{\rm 1\kern-.17em l}}
\usepackage{wrapfig}
\usepackage{lipsum}

\def\be{\begin{enumerate}}
\def\ee{\end{enumerate}}

\def\argmin{\mathop{\rm argmin}}

 \newcommand{\remove}[1]{}
\newcommand{\EXP}[1]{\mathsf{E}\!\left[#1\right] }

\allowdisplaybreaks

\def\g{\gamma}

\def\argmin{\mathop{\rm argmin}}

\pagenumbering{arabic}
\begin{document}
	\title{An iterative regularized mirror descent method for ill-posed nondifferentiable stochastic optimization
	}
	
	% Block of authors and their affiliations starts here:
	% NOTE: Authors with same affiliation, if the order of authors allows,
	%   should be entered in ONE field, separated by a comma.
	%   \EMAIL field can be repeated if more than one author
	%\ARTICLEAUTHORS{%
	\author{Mostafa Amini\thanks{School of Industrial Engineering \& Management, Oklahoma State University, Stillwater, OK 74074, USA,  \texttt{moamini@okstate.edu};}, {Farzad Yousefian}\thanks{School of Industrial Engineering \& Management, Oklahoma State University, Stillwater, OK 74074, USA,
			\texttt{farzad.yousefian@okstate.edu};}
	} % end of the block
	
	\setlength{\textheight}{9in} \setlength{\topmargin}{0in}\setlength{\headheight}{0in}\setlength{\headsep}{0in}
	\setlength{\textwidth}{6.5in} \setlength{\oddsidemargin}{0in}\setlength{\marginparsep}{0in}

	\maketitle
	\thispagestyle{empty}

\begin{abstract} 
A wide range of applications arising in machine learning and signal processing can be cast as convex optimization problems. These problems are often ill-posed, i.e., the optimal solution lacks a desired property such as uniqueness or sparsity. In the literature, to address ill-posedness, a bilevel optimization problem is considered where the goal is to find among optimal solutions of the inner level optimization problem, a solution that minimizes a secondary metric, i.e., the outer level objective function. In addressing the resulting bilevel model, the convergence analysis of most existing methods is limited to the case where both inner and outer level objectives are differentiable deterministic functions. While these assumptions may not hold in big data applications, to the best of our knowledge, no solution method equipped with complexity analysis exists to address presence of uncertainty and nondifferentiability in both levels in this class of problems. Motivated by this gap, we develop a first-order method called Iterative Regularized Stochastic Mirror Descent (IR-SMD). We establish the global convergence of the iterate generated by the algorithm to the optimal solution of the bilevel problem in an almost sure and a mean sense. We derive a convergence rate of ${\cal O}\left(1/N^{0.5-\delta}\right)$ for the inner level problem, where $\delta>0$ is an arbitrary small scalar. Numerical experiments for solving two classes of bilevel problems, including a large scale binary text classification application, are presented.
\end{abstract}
	
\section{Introduction}\label{sec:intro}
Consider the following canonical stochastic convex optimization problem

\begin{align}\label{def:firstlevel} \tag{$P_f$}
	\displaystyle \mbox{minimize}& \qquad f(x)\triangleq  \EXP{F(x,\xi)}\\
	\mbox{subject to} &\qquad x \in X, \notag
\end{align}
where $X \subseteq \mathbf{R}^n $ is a nonempty, closed and convex set, $f:X \rightarrow \mathbf{R}$ is a convex function given as an expected value of a stochastic function $F:X\times{\mathbf{R}^{d}} \rightarrow \mathbf{R}$,  $\xi: \Omega \to \mathbf{R}^{d}$ is a random variable, and $(\Omega, {\cal F}, \mathbf{P})$ represents the associated	probability space. In addressing  \eqref{def:firstlevel}, Monte Carlo sampling methods have been very successful in the literature (\cite{Ermoliev69,Ermoliev83}). Of these, the stochastic approximation (SA) method, developed by Robbins and Monro \cite{robbins51sa}, has been applied extensively to solve stochastic optimization and equilibrium problems (\cite{Houyuan08,Farzad3}). Acceleration of SA methods first was introduced by Polyak and Juditsky in '90s \cite{Polyak92} and was carried out by employing averaging techniques. The extension of SA scheme in non-Euclidean spaces was developed by Nemirovski et al. in \cite{Nemir09} and is called stochastic mirror descent (SMD) method. In \cite{Nemir09}, SMD method is applied to solve problem \eqref{def:firstlevel} where function $F(x,\xi)$ is assumed to be nondifferentiable and convex. An optimal convergence rate of  ${\cal O}\left({1}/{\sqrt{N}}\right)$ is derived under averaging. Nedi\'c and Lee \cite{Nedic14} developed SMD methods with an optimal convergence rate under a different set of averaging weights. To address high dimensionality in stochastic optimization, Dang and Lan \cite{Dang15} developed a randomized block-coordinate SMD method in that only a block of the iterate is updated. Optimal non-averaging SMD methods for smooth, nonsmooth, and high dimensional problems with strongly convex objective functions have been also developed (see \cite{Farzad1,Farzad2,Nahid17}).

Often in applications arising from machine learning and signal processing, problem \eqref{def:firstlevel} is ill-posed, i.e., the optimal solution lacks a desired property such as uniqueness or sparsity (see \cite{Tikhonov77} and \cite{Friedlander07} for a detailed review of ill-posed problems and their applications). To address ill-posedness in optimization, a secondary metric is employed that quantifies the desired property. The goal is then to obtain a solution among the optimal solution set of problem \eqref{def:firstlevel} that minimizes the secondary metric. Let function $h: X \rightarrow \mathbf R$ denote the secondary performance measure of interest. Consequently, the following optimization problem is considered 
\begin{align}\label{def:SL} \tag{$P_f^h$}
	\displaystyle \mbox{minimize}& \qquad h(x)\\
	\mbox{subject to} & \qquad x \in \argmin_{y\in X}\EXP{F(y,\xi)} \notag.
\end{align}
Problem \eqref{def:SL} has a bilevel structure and is referred to as the ``selection problem'' (e.g., see \cite{Friedlander07}). The main goal in this paper is to develop a first-order method equipped with complexity analysis for solving problem \eqref{def:SL}.

\begin{remark}
	In some applications, function $h$ can be given in the form of an expectation. As such, throughout the paper, we assume function $h$ is given as $h(x)\triangleq \EXP{H(x,\xi)}$. A motivating example to this case is two-stage stochastic nonlinear programming that will be discussed in the following section (see Lemma \ref{two-stage2bilevel}).
	%	 	 	\begin{align}\label{eqn:SONLC} 
	%	\displaystyle \mbox{minimize}& \qquad h(x)\triangleq \EXP{H(x,\xi)}\\
	%	\mbox{subject to} &\qquad g(x) \leq 0 \notag \\
	%	&\qquad x \in X\notag,
	%	\end{align}
	%	where $g$ is a deterministic convex function. When problem \eqref{eqn:SONLC}  is feasible,  under convexity assumptions of the set $X$, and functions $f$ and $h$,  problem \eqref{eqn:SONLC} can be reformulated equivalently as 
	%		 	 	\begin{align}
	%	\displaystyle \mbox{minimize}& \qquad h(x)\triangleq \EXP{H(x,\xi)}\\
	%	\mbox{subject to} &\qquad  x \in \argmin_{y\in X} f(x), \notag
	%	\end{align}
	%	where we define $f$ as $f(x)\triangleq \max\{0,g(x)\}$. Therefore, problem \eqref{eqn:SONLC} is a special case of the formulation \eqref{def:SL}.
\end{remark}
\begin{remark}
	We note that the term ``bilevel'' has been often used in the literature to refer to a more general formulation, where functions $h$ and $f$ are each characterized in terms of two groups of variables, e.g., $x$ and $y$ (cf. \cite{Dempe12}). However, similar to the terminology used in \cite{Solodov07,Beck14, Sabach17}, throughout this paper, the term ``bilevel'' is used to refer to the specific formulation \eqref{def:SL}. 
\end{remark}

\subsection{Example problems} We discuss two classes of problems that can be formulated using the model \eqref{def:SL}.\\
\noindent (i) \textbf{Ill-posed empirical loss minimization (ELM):}
Given a training set $\{(a_i,b_i)\}_{i=1}^N \subset \mathcal{A}\times \mathcal{B}$ consisting of input objects $a_i$ and their associated output values $b_i$ for datum $i$, the goal in the ELM model lies in learning a function ({e.g.,} a hyperplane in linear regression) in order to classify new observations. The resulting problem is cast as the following convex optimization problem
\begin{align}\label{eqn:problem3}\tag{ELM}
	\displaystyle \mbox{minimize}& \qquad\frac{1}{N}\sum_{i=1}^N \mathcal{L}(a_i^Tx,b_i)\\
	\mbox{subject to} &\qquad x \in X \subseteq \mathbf{R}^n,\notag
\end{align} 
where $\mathcal{L}:\mathbf{R}\times \mathcal{B}\to \mathbf{R}$ is a convex loss function. Depending on the type of the application, a variety of choices for $\mathcal{L}$ have been employed. For instance, in binary classification problems, given an output $b_i \in \{-1,+1\}$, the logistic regression problem is characterized by $\mathcal{L}(z,b_i)=\log(1+\exp(-b_iz))$, while {the hinge loss} is given by $\mathcal{L}(z,b_i)=\max\{0,1-b_iz\}$. Challenges arise when the resulting large-scale problem of the form \eqref{eqn:problem3} is ill-posed. To address ill-posedness, a secondary metric $h(x)$ can be considered. The goal is then to find among optimal solutions of \eqref{eqn:problem3}, one that minimizes $h(x)$ \cite{Mangasarian91,Friedlander07}. For example, to induce sparsity, the \textit{elastic net} regularizer \cite{Zou05} can be considered % which is a summation of $\ell_1$ and $\ell_2$ norm with proper multipliers 
as the secondary metric. Consequently, the following bilevel optimization model is considered \cite{Friedlander07,Sabach17}
\begin{align}\label{prob:ELM-Bilevel}
	\displaystyle \mbox{minimize}& \qquad h(x)\triangleq \|x\|_1 + \mu \|x\|_2^2\\
	\mbox{subject to} &\qquad x \in \arg \min_{y \in X} \EXP{F(y,\xi)}, \nonumber
\end{align}
where, $\xi \in \{\xi_1, \cdots, \xi_N\}$ has a finite support with $Prob(\xi=\xi_i)=1/N$, $F(y,\xi_i)=\mathcal{L}(a_i^Tx,b_i)$, and $\mu>0$ regulates the trade-off between $\ell_1$ and $\ell_2$ norms.

\noindent (ii) \textbf{Two-stage stochastic nonlinear programming:}
In this part, we first consider two-stage stochastic programming (cf. \cite{Birge11} and Ch. 2 of \cite{Shapiro09}) which has a wide area of applications specially in transportation, logistics, finance and power systems \cite{Ankur12,Lan16,Junyi18}. We provide the required preliminaries that help us write a nonlinear two-stage program in the form of a single-stage problem. Then, we show that under some mild assumptions, we can reformulate it as a bilevel problem of the form \eqref{def:SL}. %in Lemmas \eqref{lemma 2-stage 2} and \eqref{lemma 2-stage 1}

Consider the following two-sate stochastic nonlinear programming	
\begin{align}\label{single stage}
	\displaystyle \mbox{minimize}& \qquad   c(z)+\EXP{Q(z,\xi)}  \\
	\mbox{subject to} &\qquad u_\ell(z) \leq 0 , \qquad \hbox{for } \ell=1,\cdots, L, \nonumber \\ 
	&\qquad z \in Z, \nonumber 
\end{align}
for $Z \subseteq \mathbf{R}^n$, functions $c,u_\ell:\mathbf{R}^n \rightarrow  \mathbf{R}$, and a random variable $\xi \in \mathbf{R}^d$ with a finite support $\{\xi_1, \cdots, \xi_N \}$. Here, $Q(z,\xi_i)$ is the optimal value of the following second-stage problem for $i=1,\cdots, N$
\begin{align}\label{second stage}
	\displaystyle \mbox{minimize}& \qquad  q(y_i,\xi_i)  \\
	\mbox{subject to} &\qquad t_j(z) + w_j(y_i,\xi_i) \leq 0, \qquad \hbox{for } j=1,\cdots, J,\nonumber \\ &\qquad y_i \in Y, \nonumber
\end{align}
for the set $Y\subseteq \mathbf{R}^m$, and functions $t_j:\mathbf{R}^n \rightarrow  \mathbf{R}$, $w_j:\mathbf{R}^{m\times d}  \rightarrow  \mathbf{R}$. Note that we assume the random vector here has a finite support. The analysis where it has an infinite support i.e., $\xi \in \{\xi_1, \cdots, \xi_N \}$, is discussed in \cite{Birge11}.

In the following lemma, we show that how we can write the two-stage stochastic programming \eqref{single stage} in a compact form. The proof is provided in Appendix \ref{proof of lemma 2-stage 2}.% To simplify the result, however, we will need the following lemmas. In Lemma \eqref{lemma 2-stage 2} we state when we can interchange summation and minimum functions. Then in Lemma \eqref{lemma 2-stage 1} we explain how we can merge two minimization problems to one.
\begin{lemma} \label{lemma 2-stage 2}
	Let $Z \subseteq \mathbf{R}^n$ and $Y \subseteq \mathbf{R}^m$ be nonempty, closed and convex sets, functions $c,u_\ell, t_j:\mathbf{R}^n \rightarrow  \mathbf{R}$ be convex over the set $Z$, and function $w_j$ be convex over $Y$ for all $j=1,\cdots,J$. Also, assume $\xi$ is a random variable with finite support $\{\xi_1, \cdots, \xi_N \}$ with $Prob(\xi=\xi_i)=p_i$ for $i=1,\cdots, N$. In addition, suppose $Y_i(z) \triangleq \{y_i\in Y| t_j(z)+w_j(y_i,\xi_i)\leq 0 \hbox{ for } j=1,\cdots,J\}$ is a nonempty set and $q$ is a real-valued convex function over $Y_i(x)$ for $i=1,\cdots, N$. Then, model \eqref{single stage} can be rewritten as follows
	\begin{align}\label{compact two-stage}
		\displaystyle \mbox{minimize}& \qquad  c(z)  + \sum_{i=1}^{N} p_i  q(y_i,\xi_i) \\
		\mbox{subject to} &\qquad u_\ell(z) \leq 0,\qquad \hbox{for } \ell=1, \cdots, L \nonumber \\
		&\qquad t_j(z) + w_j(y_i,\xi_i) \leq  0, \qquad \hbox{for } i=1, \cdots, N,\ j=1,\cdots,J, \nonumber \\ &\qquad \ z \in Z, \ y_i \in Y, \qquad \hbox{for } i=1, \cdots, N. \nonumber
	\end{align}	
\end{lemma}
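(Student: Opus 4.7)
The plan is to show the equivalence of \eqref{single stage} and \eqref{compact two-stage} by rewriting the expected recourse cost as a finite sum, invoking separability of the second-stage problems across scenarios, and then interchanging the outer minimization with the sum of inner minimizations.

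First, I would exploit the finite support assumption to rewrite
\[
\EXP{Q(z,\xi)} \;=\; \sum_{i=1}^{N} p_i\, Q(z,\xi_i),
\]
where by definition $Q(z,\xi_i) = \min\{q(y_i,\xi_i) : y_i \in Y_i(z)\}$ for each scenario $i$. The nonemptiness assumption on $Y_i(z)$ together with the convexity of $q(\cdot,\xi_i)$ over $Y_i(z)$ ensures that each inner minimum is well-defined (and attained, or at least the infimum is finite) so that the sum is meaningful.

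Next, I would use the key structural observation that the second-stage decision variables $y_1,\dots,y_N$ are \emph{scenario-separable}: the variable $y_i$ appears only in the $i$-th term of the objective and only in the $i$-th block of constraints $t_j(z)+w_j(y_i,\xi_i)\le 0$, $j=1,\dots,J$. This separability lets one interchange the weighted sum with the minimization,
\[
\sum_{i=1}^{N} p_i \min_{y_i \in Y_i(z)} q(y_i,\xi_i) \;=\; \min_{\substack{y_i \in Y,\\ t_j(z)+w_j(y_i,\xi_i)\le 0}} \sum_{i=1}^{N} p_i\, q(y_i,\xi_i),
\]
because any choice made in one scenario does not constrain the others. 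Combining this with the outer minimization over $z \in Z$ subject to $u_\ell(z)\le 0$, and pooling all the constraints, produces exactly the joint formulation \eqref{compact two-stage}. The convexity hypotheses on $c,u_\ell,t_j,w_j,q$ further guarantee that the resulting joint problem is a convex program, although convexity is not strictly required for the equivalence itself.

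The step I expect to require the most care is justifying the interchange of $\min$ and $\sum$ rigorously, and confirming that the feasible sets match: specifically, that $(z,y_1,\dots,y_N)$ is feasible for \eqref{compact two-stage} if and only if $z$ is feasible for \eqref{single stage} and each $y_i \in Y_i(z)$. This is a direct set-equality argument using the definition of $Y_i(z)$ and the nonemptiness assumption, so it is conceptually simple but needs to be written out explicitly. Once the interchange and the feasibility equivalence are in place, equality of optimal values (and the correspondence of optimal solutions) follows immediately, completing the proof.
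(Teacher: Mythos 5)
Your proposal is correct and follows the same two-step decomposition as the paper: (1) use the finite support to turn the expected recourse into a weighted sum, then interchange that sum with the minimization over the second-stage variables, and (2) collapse the resulting nested minimization over $z$ and $y$ into a single joint program by matching feasible sets. Where you differ is in how the two sub-steps are justified, and your route is the more elementary one. For the interchange $\min_{y\in\prod_i Y_i(z)}\sum_i p_i q(y_i,\xi_i)=\sum_i p_i\min_{y_i\in Y_i(z)}q(y_i,\xi_i)$, you argue directly from scenario-separability of the objective and of the feasible (product) set, which is a one-line set/infimum argument valid for any functions with $p_i\ge 0$. The paper instead proves this via first-order optimality conditions written with $\nabla q(y_i^*,\xi_i)$, which is technically delicate since $q$ is only assumed convex (not differentiable); your separability argument sidesteps that issue entirely. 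For the second step, the paper deploys a Lagrangian $\sup$--$\inf$ computation to show $\min_z\min_{y\in Y(z)}=\min_{z,\,y\in Y(z)}$, whereas you treat it as a direct verification that $(z,y_1,\dots,y_N)$ is feasible for \eqref{compact two-stage} iff $z$ is feasible for \eqref{single stage} and each $y_i\in Y_i(z)$; for this particular identity the direct argument suffices and the duality machinery buys nothing extra. The one point you should make explicit when writing it up is attainment: you correctly flag that $Y_i(z)$ nonempty and $q$ real-valued only guarantee a finite infimum, so either state the interchange at the level of infima or add a compactness/attainment hypothesis, a caveat the paper's proof glosses over as well.
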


In the following lemma, we state how we can reformulate the compact model \eqref{compact two-stage} as a bilevel problem of the form \eqref{def:SL}. The proof is given in Appendix \ref{proof of lemma two-stage2bilevel}.
\begin{lemma}\label{two-stage2bilevel}
	Let the random variable $\xi$ have a distribution with a finite support $\{\xi_1,\cdots, \xi_N\}$, and $Prob(\xi=\xi_i)=p_i$ for $i=1, \cdots, N$. Assume $Z \subseteq \mathbf{R}^n$ and $Y \subseteq \mathbf{R}^m$ are nonempty, closed and convex sets. Then, under assumptions given in Lemma \ref{lemma 2-stage 2}, model \eqref{single stage} is equivalent to the following bilevel optimization problem
	\begin{align}\label{bilevel two-stage2}
		\displaystyle \mbox{minimize}& \qquad \EXP{H(x,\xi)}  \\
		\mbox{subject to} &\qquad x \in \argmin_{x \in X } \EXP{F(x,\xi)}, \nonumber 
	\end{align}
	where $x^T \triangleq (z^T,y_1^T,\cdots,y_N^T)$, $X \triangleq Z\times Y^N$, and 
	\begin{align*}
		F(x,\xi_i) &\triangleq \sum_{j=1}^{J} \max \{0, t_j(z)+w_j(y_i,\xi_i) \} + \sum_{\ell=1}^{L} \max \{0,u_\ell(z) \}, \\
		H(x,\xi_i) &\triangleq c(z)+ q(y_i,\xi_i).
	\end{align*}
\end{lemma}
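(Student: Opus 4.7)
By Lemma \ref{lemma 2-stage 2}, it suffices to show that the compact model \eqref{compact two-stage} and the bilevel program \eqref{bilevel two-stage2} are equivalent. The plan is to establish two facts: (i) the inner-level $\argmin$ set in \eqref{bilevel two-stage2} coincides exactly with the feasible set of \eqref{compact two-stage}; and (ii) on this set, the outer-level objective $\EXP{H(x,\xi)}$ agrees with the objective of \eqref{compact two-stage}. Given these two facts, the two optimization problems have the same admissible decisions and the same objective values, so they share an optimal solution set.

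For (i), I would exploit the nonnegativity of the $\max\{0,\cdot\}$ terms. Since every summand in $F(x,\xi_i)$ is nonnegative, we have $F(x,\xi_i)\geq 0$ for all $x\in X$ and all $i$, and therefore $\EXP{F(x,\xi)}=\sum_{i=1}^N p_i F(x,\xi_i)\geq 0$ whenever $p_i>0$. Under the assumptions of Lemma \ref{lemma 2-stage 2}, the feasible set of \eqref{compact two-stage} is nonempty, so there exists some $\bar x=(\bar z,\bar y_1,\ldots,\bar y_N)\in X$ for which every constraint in \eqref{compact two-stage} holds; at such an $\bar x$, each $\max$ term vanishes, giving $\EXP{F(\bar x,\xi)}=0$. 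Thus the inner-level optimal value is $0$, and since $\EXP{F(x,\xi)}=0$ forces $\max\{0,t_j(z)+w_j(y_i,\xi_i)\}=0$ and $\max\{0,u_\ell(z)\}=0$ termwise, the $\argmin$ set is exactly
\begin{align*}
\{x\in X:\ t_j(z)+w_j(y_i,\xi_i)\leq 0,\ u_\ell(z)\leq 0,\ \forall i,j,\ell\},
\end{align*}
which is precisely the feasible set of \eqref{compact two-stage}.

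For (ii), I would compute directly: for any $x\in X$,
\begin{align*}
\EXP{H(x,\xi)}=\sum_{i=1}^{N} p_i\bigl(c(z)+q(y_i,\xi_i)\bigr)=c(z)+\sum_{i=1}^{N}p_i q(y_i,\xi_i),
\end{align*}
which matches the objective of \eqref{compact two-stage}. Combining (i) and (ii), a point $x^\star$ is optimal for \eqref{bilevel two-stage2} if and only if it is optimal for \eqref{compact two-stage}, and then invoking Lemma \ref{lemma 2-stage 2} yields the equivalence with \eqref{single stage}.

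The argument is conceptually straightforward once one notices that $F$ acts as an exact, nonsmooth indicator-style reformulation of the joint feasibility constraints; no penalty-parameter tuning or Lagrangian duality is required. The main step that deserves care is ensuring that $\EXP{F(x,\xi)}=0$ genuinely enforces all constraints simultaneously and almost-surely, which hinges on $p_i>0$ for each scenario, and on the nonemptiness of $Y_i(z)$ from the hypothesis of Lemma \ref{lemma 2-stage 2} to guarantee that the inner-level minimum value $0$ is attained; otherwise, the inner $\argmin$ set could be empty and the bilevel problem itself ill-defined.
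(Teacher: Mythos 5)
Your proposal is correct and follows essentially the same route as the paper's proof: both reduce to comparing \eqref{compact two-stage} with \eqref{bilevel two-stage2}, both exploit that $F(\cdot,\xi_i)\geq 0$ so that $\EXP{F(x,\xi)}=0$ characterizes exactly the feasible set of \eqref{compact two-stage} (with attainment guaranteed by nonemptiness from Lemma \ref{lemma 2-stage 2}), and both use that the outer objective coincides with the compact model's objective. Your direct identification of the inner-level $\argmin$ set with the feasible set is a slightly cleaner packaging than the paper's two-directional argument by contradiction, and your explicit remark that $p_i>0$ is what forces the constraints to hold for every scenario is a point the paper leaves implicit.
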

\subsection{Existing methods}  In addressing problem \eqref{def:SL}, challenges may arise due to: (i) the bilevel structure of the problem, (ii) uncertainty, and (iii) nondifferentiability of functions $f$ and $h$. Next, we discuss some of the standard approaches in addressing these challenges for solving \eqref{def:SL} and explain their limitations.
\subsubsection{Sequential regularization (SR)} When problem \eqref{def:firstlevel} is ill-posed, a standard approach is to employ the regularization technique, where a regularized optimization problem of the following form is considered
\begin{align}\label{regularized form} \tag{$P_\lambda$}
	\displaystyle \mbox{minimize}& \qquad f_\lambda(x)\triangleq f(x)+\lambda h(x)\\
	\mbox{subject to} &\qquad x \in X, \notag
\end{align}
where $\lambda>0$ is a (user-specific) regularization parameter and provides a trade-off between the two metrics $f$ and $h$. Examples of this technique include the celebrated Tikhonov regularization \cite{Tikhonov77} where we have $h(x)=\|x\|_2^2$. In signal processing applications, $l_1$ regularization (i.e., $h(x)=\|x\|_1$) has been used extensively to find sparse solutions, i.e. \cite{Boyd07,Beck09}. See \cite{Wright12,Lin14,Kimon16} for a more detailed discussion of the types of regularizers. 
%The main drawback of the regularization technique is that despite the sensitivity of problem \eqref{regularized form} with respect to $\lambda$, there is not enough guidance on the choice of the regularization parameter.
In addressing problem \eqref{def:SL}, one may solve a sequence of the regularized problems \eqref{regularized form} for $\lambda \in\{\lambda_k\} \subset \mathbf R_{++}$ with $\lambda_k \to 0$. This necessitates implementation of a two-loop scheme where in the inner loop, \eqref{regularized form} is solved for a fixed $\lambda$, and in the outer loop, $\lambda$ is updated. As such the sequential regularization scheme is computationally expensive compared to single-loop schemes (see Ch.  12 of \cite{facchinei02finite} for more details). 
\subsubsection{Exact regularization} In addressing ill-posed problems, Mangasarian et al. \cite{Mangasarian79,Mangasarian91} studied ``exact regularization'' of linear and nonlinear programs. A regularization is said to be exact when an optimal solution of \eqref{regularized form}, is also an optimal solution of problem \eqref{def:firstlevel}. Friedlander and Tseng \cite{Friedlander07} showed that the regularization of convex programs is exact when $\lambda$ is below some threshold, and derived error bounds for an inexact regularization. Extensions of this work to variational inequality problems is studied in \cite{Charitha17}. The main drawback of the exact regularization approach is that the threshold on the regularization parameter is not known and is often hard to determine a priori in practice (cf. \cite{Friedlander07}). 
\subsubsection{Iterative regularization (IR)} Another avenue for addressing ill-posedness is the iterative regularization technique. A key difference between SR and IR schemes is that in the latter, the regularization parameter is updated iteratively during the algorithm. As such, IR schemes have a single-loop structure and prove to be computationally more efficient than their SR counterparts. In \cite{Solodov07}, an IR scheme is developed where at the $k$th iteration, an approximate solution to problem \eqref{regularized form} with $\lambda=\lambda_k$ is generated. It is shown that when $\sum_{k=0}^\infty \lambda_k= \infty$, the iterate generated by the proposed method converges to the optimal solution of \eqref{def:SL}. In \cite{Yamada11}, under a similar set of conditions on $\{\lambda_k\}$, a ``hybrid steepest descent method" is developed and the convergence to an optimal solution of the  problem is established. Other papers on addressing problem \eqref{def:SL} include \cite{Elias11,Helou17}. In all the aforementioned papers, the complexity analysis is not addressed. 
Our work in this paper builds on the work in \cite{Farzad3}, where Yousefian et al. considered ill-posed stochastic variational inequality problems where the mapping is merely monotone and possibly non-Lipschitz. In \cite{Farzad3}, an iterative regularized smoothing stochastic approximation scheme, called RSSA, is developed where at each iteration, a noisy observation of the stochastic mapping is used. It is shown that the generated sequence by the RSSA method converges to the least $\ell_2$ norm solution of the VI in an almost sure and a mean sense. Also, a convergence rate of the order $1/\sqrt{k^{1/6-\epsilon}}$ is derived in terms of a suitably defined gap function, where $\epsilon>0$ is an arbitrary small scalar. The main drawback of the RSSA scheme is the degraded convergence rate due to employment of a smoothing scheme. In this paper, this rate is improved to  $1/\sqrt{k^{0.5-\epsilon}}$. Importantly, while in \cite{Farzad3}, the regularizer $h$ is assumed to be $\ell_2$ norm, in this work we allow the function $h$ to be given in the form of an expectation of a nondifferentiable stochastic and strongly convex function. Among the other papers that address the complexity analysis for solving \eqref{def:SL}, \cite{Beck14} and \cite{Sabach17} are described next. 
\subsubsection{Minimal norm gradient method (MNG)} In \cite{Beck14}, a ``minimal norm gradient algorithm'' is developed for solving problem \eqref{def:SL} where $f$ and $h$ are both assumed to be deterministic and differentiable. It is shown that the sequence generated by the algorithm converges to the optimal solution of \eqref{def:SL} (see Theorem 4.1 in \cite{Beck14}). A convergence rate of the order $\frac{1}{\sqrt{k}}$ is derived in terms of $f$ values (see Theorem 4.2 in \cite{Beck14}). The main drawback is that MNG is a two-loop scheme where at each iteration, an optimization problem characterized by the function $h$ needs to be solved. This is computationally expensive in the case that $h$ is complicated by uncertainty. 

\subsubsection{Sequential averaging method (SAM)} In \cite{Sabach17}, a method called BiG-SAM, with an improved convergence rate of the order $\frac{1}{{k}}$ in terms of $f$ values is developed (see Theorem 1 in \cite{Sabach17}). In contrast with \cite{Beck14}, BiG-SAM is a single-loop scheme. An underlying assumption in BiG-SAM is that the function $f$ is of the form $f_1(x)+f_2(x)$, where $f_1$ is continuously differentiable and has Lipschitz gradients and $f_2$ is an extended-valued and possibly nonsmooth function. The differentiability of $f_1$ plays a key role in deriving the sublinear convergence rate in \cite{Sabach17}. Another limitation to \cite{Sabach17} is that both $f$ and $h$ are assumed to be deterministic. In big data applications, $f$ may be stochastic and nondifferentiable. Note that, in solving \eqref{prob:ELM-Bilevel}, the implementation of BIG-SAM becomes challenging due to nondifferentiablity of $h$ and the large sample size $N$.

\begin{table}[t]
	\caption{Comparison of methods in addressing \eqref{def:SL}}
	\centering
	\scalebox{0.85}[0.85]{
		\label{table1}
		\begin{tabular}{c|c|c|c|c|c|c|c|c|c}
			\multirow{2}{*}{Reference}                                             & \multicolumn{3}{c|}{Assump. on $f$}                                             & \multicolumn{3}{c|}{Assump. on $h$}                                              & \multirow{2}{*}{Method}                                                                 & \multirow{2}{*}{Metric} & \multirow{2}{*}{Converg.}   \\ \cline{2-7}
			& con.              & dif.              & form                                  & con.               & dif.              & form                                  &                                                                                         &                         &                             \\ \hline \hline
			\multirow{2}{*}{{\scriptsize Solodov} \cite{Solodov07}}             & \multirow{2}{*}{C} & \multirow{2}{*}{\cmark} & \multirow{2}{*}{$f$}                  & \multirow{2}{*}{C}  & \multirow{2}{*}{\cmark} & \multirow{2}{*}{$h$}                  & \multirow{2}{*}{iter. regu.}                                                            & $f_k-f^*$               & \multirow{2}{*}{asympt.} \\ \cline{9-9}
			&                    &                    &                                       &                     &                    &                                       &                                                                                         & $h_k-h^*$               &                             \\ \hline
			\multirow{2}{*}{{\scriptsize Solodov} \cite{Solodov072}}            & \multirow{2}{*}{C} & \multirow{2}{*}{\xmark} & \multirow{2}{*}{$f$}                  & \multirow{2}{*}{C}  & \multirow{2}{*}{\xmark} & \multirow{2}{*}{$h$}                  & \multirow{2}{*}{iter. regu.}                                                            & $f_k-f^*$               & \multirow{2}{*}{asympt.} \\ \cline{9-9}
			&                    &                    &                                       &                     &                    &                                       &                                                                                         & $h_k-h^*$               &                             \\ \hline
			\multirow{2}{*}{{\scriptsize Beck \& Sabach} \cite{Beck14}}         & \multirow{2}{*}{C} & \multirow{2}{*}{\cmark} & \multirow{2}{*}{$f$}                  & \multirow{2}{*}{SC} & \multirow{2}{*}{\cmark} & \multirow{2}{*}{$h$}                  & \multirow{2}{*}{MNG}                                                                    & $f_k-f^*$               & \scriptsize ${\cal O}(1/\sqrt k)$                 \\ \cline{9-10} 
			&                    &                    &                                       &                     &                    &                                       &                                                                                         & $h_k-h^*$               & asympt.                 \\ \hline
			\multirow{2}{*}{{\scriptsize Sabach \& Shtern} \cite{Sabach17}}     & \multirow{2}{*}{C} & \multirow{2}{*}{\cmark} & \multirow{2}{*}{$f_1+f_2$}            & \multirow{2}{*}{SC} & \multirow{2}{*}{\cmark} & \multirow{2}{*}{$h$}                  & \multirow{2}{*}{SAM}                                                                    & $f_k-f^*$               &\scriptsize  ${\cal O}(1/k)$                       \\ \cline{9-10} 
			&                    &                    &                                       &                     &                    &                                       &                                                                                         & $h_k-h^*$               & asympt.                  \\ \hline
			\multirow{2}{*}{{\scriptsize Garrigos et al.}\cite{Guil18}}        & \multirow{2}{*}{C} & \multirow{2}{*}{\cmark} & \multirow{2}{*}{$f$}                  & \multirow{2}{*}{SC} & \multirow{2}{*}{\cmark} & \multirow{2}{*}{$h$}                  & \multirow{2}{*}{iter. regu}                                                             & $f_k-f^*$               & asymp.                  \\ \cline{9-10} 
			&                    &                    &                                       &                     &                    &                                       &                                                                                         & $h_k-h^*$               & \scriptsize ${\cal O}(1/k)$                       \\ \hline
			\multirow{2}{*}{{\scriptsize Yousefian et al.}\cite{Farzad3}}      & \multirow{2}{*}{C} & \multirow{2}{*}{\cmark} & \multirow{2}{*}{$f$}                  & \multirow{2}{*}{SC} & \multirow{2}{*}{\cmark} & \multirow{2}{*}{$h$}                  & \multirow{2}{*}{iter. regu}                                                             & $f_k-f^*$               & asympt.                  \\ \cline{9-10} 
			&                    &                    &                                       &                     &                    &                                       &                                                                                         & $h_k-h^*$               & \scriptsize ${\cal O}(1/k^{(1/6-\delta)})$        \\ \hline
			\multirow{2}{*}{{\scriptsize Amini \& Yousefian} \cite{Amini18}}     & \multirow{2}{*}{C} & \multirow{2}{*}{\xmark} & \multirow{2}{*}{$\sum_i f_i$}         & \multirow{2}{*}{SC} & \multirow{2}{*}{\xmark} & \multirow{2}{*}{$h$}                  & \multirow{2}{*}{\begin{tabular}[c]{@{}c@{}}incremental\\ iter. regu.\end{tabular}}      & $f_k-f^*$               & \scriptsize ${\cal O}(1/k^{(0.5-\delta)})$        \\ \cline{9-10} 
			&                    &                    &                                       &                     &                    &                                       &                                                                                         & $h_k-h^*$               & asympt.                  \\ \hline
			\multirow{2}{*}{{\scriptsize Kaushik \& Yousefian} \cite{Harshal18}} & \multirow{2}{*}{C} & \multirow{2}{*}{\xmark} & \multirow{2}{*}{high-dim}             & \multirow{2}{*}{SC} & \multirow{2}{*}{\xmark} & \multirow{2}{*}{$h$}                  & \multirow{2}{*}{\begin{tabular}[c]{@{}c@{}}block-coord.\\ iter. regu.\end{tabular}} & $f_k-f^*$               & \scriptsize ${\cal O}(1/k^{(0.5-\delta)})$        \\ \cline{9-10} 
			&                    &                    &                                       &                     &                    &                                       &                                                                                         & $h_k-h^*$               & asympt.                  \\ \hline
			\multirow{2}{*}{\bf This work}                                             & \multirow{2}{*}{C} & \multirow{2}{*}{\xmark} & \multirow{2}{*}{$\EXP{F(\cdot,\xi)}$} & \multirow{2}{*}{SC} & \multirow{2}{*}{\xmark} & \multirow{2}{*}{$\EXP{H(\cdot,\xi)}$} & \multirow{2}{*}{iter. regu}                                                             &\scriptsize $\EXP{f_k}-f^*$               &\scriptsize ${\cal O}(1/k^{(0.5-\delta)})$ a.s.        \\ \cline{9-10} 
			&                    &                    &                                       &                     &                    &                                       &                                                                                         & \scriptsize $\EXP{h_k}-h^*$               & asympt. a.s.                  \\ \hline \hline
		\end{tabular}
	}
		\caption*{{\scriptsize C: Convex, SC: Strongly Convex}}
\end{table}
\subsection{Main contributions}\label{sec:main}
To describe the contributions of our work, we provide Table \ref{table1}. The references \cite{Solodov07,Solodov072} provide no rate statements, while the analysis in \cite{Beck14,Sabach17} relies extensively on differentiability of functions $f$ and $h$. Moreover, in all the references listed in Table \ref{table1} functions in both levels of problem \eqref{def:SL}  are assumed to be deterministic. In this paper,  we allow both functions $f$ and $h$ to be nondifferentiable and complicated by uncertainty. We develop a first-order method called iterative regularized stochastic mirror descent (IR-SMD) (see Algorithm \ref{algorithm:SMD}) where at each iteration, a subgradient of function $f$ is regularized using a regularization parameter and a subgradient of function $h$. This regularization is iterative in the sense that the regularization parameter is updated at each iteration.

Our work is motivated by the idea of iterative regularization which has been studied recently in \cite{Farzad3} and \cite{Garrigos17} for solving variational inequality and optimization problems in ill-posed regimes. Here we apply this technique in solving the optimization problem \eqref{def:SL}. We establish the convergence of the iterate generated by the IR-SMD algorithm to the optimal solution of problem \eqref{def:SL} in both an almost sure and a mean sense (see Theorem \ref{thm conv for xbar}). To perform complexity analysis, we derive a rate of ${\cal O}\left(1/N^{0.5-\delta}\right)$ with respect to function $f$ in the inner level, where $\delta$ is an arbitrary small scalar (see Theorem \ref{thm rate}). To the best of our knowledge, the proposed method in this paper appears to be the first that addresses problem \eqref{def:SL} with rate analysis, when both $f$ and $h$ are nondifferentiable and stochastic.

The remainder of the paper is organized as follows. After presenting the notation, in Section \ref{assum:Prel}, we provide the setup for the prox mapping and outline its main properties. We also discuss the main assumptions on the problem, and show  properties of the sequence of optimal solutions to the regularized problem \eqref{regularized form} (see Proposition \ref{prop:xk_estimate}). In Section \ref{sec:alg}, we present the proposed IR-SMD algorithm and outline the main assumptions of this scheme. In Section \ref{sec:conv}, we prove convergence of the averaging sequence generated by Algorithm \ref{algorithm:SMD} (see Theorem \ref{thm conv for xbar}). The rate of convergence of the proposed method is derived in Section \ref{sec:rate}. Numerical experiments on different nonsmooth problems including a big data text classification application are presented in Section \ref{sec:num}. The paper is ended by some concluding remarks in Section \ref{sec:rem}.

\textbf{Notation:}
The inner product of two vectors $x$ and $y$, both in $ \mathbf R^n$, is shown as $\langle x,y\rangle$. $\EXP{x}$ denotes the expectation of a random variable $x$. We let $\|\cdot\|$ and $\|\cdot\|_*$ denote a general norm and its dual, respectively. The dual norm is defined as $\|x\|_* = \sup \{ \langle x,y \rangle | \ \|y\|\leq 1\}$, for all $x \in \mathbf R^n$. For a convex function $f$ with the domain dom$(f)$, any vector $g_f$ that satisfies $f(x)+\langle g_f, y-x \rangle \leq f(y)$ for all $x,y \in \hbox{dom}(f)$, is called a subgradient of $f$ at $x$. We let $\partial f(x)$ and $\partial  h(x)$ denote the set of all subgradients of functions $f$ and $h$ at $x$. Also, we let $\partial F(x,\xi), \partial H(x,\xi)$ denote the set of all subgradients of functions $F,H$ at $x$ for some $\xi$. Throughout the paper, we let $X^*$ and $x^* \in X^*$ denote the set of optimal solutions and an optimal solution of problem \eqref{def:firstlevel}, respectively. Similarly, we let $X^*_h$ and $x^*_h \in X^*_h$ denote the optimal solution set and an optimal solution of problem \eqref{def:SL}, respectively. We let $x^*_{\lambda}$ denote the optimal solution of problem \eqref{regularized form} and $f^*$ denote the optimal value of problem \eqref{def:firstlevel}.  We use ``a.s.'' to denote almost sure convergence. 
\section{Preliminaries}\label{assum:Prel}
In this section, we present an introduction to the basic concepts that will be employed in our analysis in the subsequent sections.

A distance generating function with respect to norm $\|\cdot\|$ is defined as $\omega:X \rightarrow \mathbf{R}$ when function $\omega$ is smooth and strongly convex with parameter $\mu_\omega>0$, i.e.,
\begin{align}\label{distance1}
	\omega(y) \geq \omega(x) + \langle \nabla \omega(x), y-x \rangle + \frac{\mu_\omega}{2} \|x-y\|^2 \qquad \hbox{for all } x,y \in X.	
\end{align}
Throughout, we assume
\begin{align}\label{distance2}
	\omega(y) \leq \omega(x) + \langle \nabla \omega(x), y-x \rangle + \frac{L_\omega}{2} \|x-y\|^2 \qquad \hbox{for all } x,y \in X,
\end{align}
i.e., $\omega$ has Lipschitz gradients with parameter $L_\omega$. These assumptions have been considered in \cite{Nedic14,Dang15} and hold for example when $\omega(x)=\frac{1}{2}\|x\|^2_2$ for $\mu_\omega=L_\omega=1$.
The Bregman distance $D: X\times X \rightarrow \mathbf{R}$ associated with $\omega$ is defined as follows:
\begin{align}\label{distance3}
	D(x,y) \triangleq \omega (y) - \omega(x)- \langle \nabla \omega(x) , y-x \rangle \qquad \hbox{for all } x,y \in X.	\end{align}
We also define the prox mapping $\mathcal P:X \times \mathbf{R}^n \rightarrow X$ as follows: 
\begin{align}\label{distance4}
	\mathcal{P}_X(x,y) \triangleq \argmin _{z \in X} \{ \langle y,z \rangle + D(x,z)\} \qquad \hbox{for all } x\in X, y \in \mathbf{R}^n.	
\end{align}

In the following lemma, we state some properties of Bregman distance that will be used in this paper. A more comprehensive discussion of these properties can be found in \cite{Nemir09}. 
\begin{lemma}[\bf{Properties of Bregman distance}] \label{D prop}
	Let $D$ be the Bregman distance given by \eqref{distance3}. Then, the following relations hold:
	\begin{itemize}
		\item[(a)] $\frac{\mu_\omega}{2}\|x-y\|^2 \leq D(x,y) \leq \frac{L_\omega}{2}\|x-y\|^2 \qquad$for all $x,y \in X$.
		\item[(b)] $D(x,z)=D(x,y)+D(y,z)+\langle\nabla\omega(y)-\nabla\omega(x),z-y\rangle \qquad$for all $x,y,z \in X$.
		\item[(c)] $\nabla_z D(x,z)=\nabla\omega(z)-\nabla\omega(x) \qquad$for all $x,z \in X$.
	\end{itemize}
\end{lemma}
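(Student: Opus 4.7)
The plan is to prove each of the three properties by direct manipulation of the definition \eqref{distance3} of the Bregman distance, using only the strong convexity and Lipschitz-gradient assumptions \eqref{distance1}--\eqref{distance2} placed on $\omega$. Each part is a short calculation, so the main task is to be careful with signs and substitutions rather than to design a nontrivial argument.

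For part (a), I would start by rewriting $D(x,y)$ as $\omega(y)-\omega(x)-\langle\nabla\omega(x),y-x\rangle$, which is exactly the residual appearing on the right-hand side of the first-order Taylor-type inequalities \eqref{distance1} and \eqref{distance2}. Subtracting $\omega(x)+\langle\nabla\omega(x),y-x\rangle$ from both sides of \eqref{distance1} yields $D(x,y)\ge\tfrac{\mu_\omega}{2}\|x-y\|^2$, and doing the same to \eqref{distance2} yields $D(x,y)\le\tfrac{L_\omega}{2}\|x-y\|^2$.

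For part (b), I would substitute the definition of $D$ into the right-hand side and expand. Writing
\[
D(x,y)+D(y,z)=\bigl[\omega(y)-\omega(x)-\langle\nabla\omega(x),y-x\rangle\bigr]+\bigl[\omega(z)-\omega(y)-\langle\nabla\omega(y),z-y\rangle\bigr],
\]
the $\omega(y)$ terms cancel, leaving $\omega(z)-\omega(x)-\langle\nabla\omega(x),y-x\rangle-\langle\nabla\omega(y),z-y\rangle$. Adding $\langle\nabla\omega(y)-\nabla\omega(x),z-y\rangle$ cancels $\langle\nabla\omega(y),z-y\rangle$ and combines the remaining $\nabla\omega(x)$ inner products into $\langle\nabla\omega(x),x-y\rangle+\langle\nabla\omega(x),y-z\rangle=\langle\nabla\omega(x),x-z\rangle$. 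What remains is $\omega(z)-\omega(x)-\langle\nabla\omega(x),z-x\rangle=D(x,z)$, as claimed.

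For part (c), I would simply differentiate the expression $D(x,z)=\omega(z)-\omega(x)-\langle\nabla\omega(x),z-x\rangle$ with respect to $z$, treating $x$ as a fixed parameter. The first term contributes $\nabla\omega(z)$, the second term contributes $0$, and the linear term $-\langle\nabla\omega(x),z-x\rangle$ contributes $-\nabla\omega(x)$, giving $\nabla_z D(x,z)=\nabla\omega(z)-\nabla\omega(x)$. There is no serious obstacle in any of the three parts; the only subtlety to watch is keeping the roles of $x$, $y$, and $z$ and the sign of each inner product consistent in part (b), where the cancellation must be executed precisely.
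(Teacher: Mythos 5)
Your proof is correct: part (a) follows immediately by subtracting $\omega(x)+\langle\nabla\omega(x),y-x\rangle$ from \eqref{distance1} and \eqref{distance2}, and parts (b) and (c) are exact algebraic identities from the definition \eqref{distance3}, with the cancellation in (b) carried out correctly. The paper does not supply its own proof of this lemma (it defers to the cited reference), and your direct verification is precisely the standard argument one would find there, so there is nothing to add.
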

Next, we state our main assumptions that will be used in the convergence analysis. %We will make other assumptions when needed.
\begin{assumption}[{\bf Problem properties}]\label{assum:properties} 
	Let the following hold:
	\begin{itemize}
		\item[(a)] The set $X \subset \mathbf R^n$ is nonempty, compact, and convex.
		\item[(b)] The function $f(x)$ is subdifferentiable and convex over the set $X$.
		\item[(c)] The function $h(x)$ is  subdifferentiable and strongly convex with parameter $\mu_h>0$ with respect to $\|\cdot\|$; i.e., for all $x,y \in X$ and $g_h(x) \in \partial h(x)$, we have $h(x)+\langle g_h(x), y-x \rangle + \frac{\mu_h}{2}\|x-y\|^2 \leq h(y)$.
		\item[(d)] The stochastic subgradient $g_f(x)$ is such that the following hold almost surely for all $x \in X$
		\begin{align}
			&\EXP{g_F(x,\xi) \mid x}= g_f(x),\label{assumption1:d1}
			\\ &\EXP{\|g_F(x,\xi)\|_*^2} \leq C_F^2,\label{assumption1:d2}
		\end{align}
		where $g_f(x) \in \partial f(x)$, $g_F(x,\xi) \in \partial F(x,\xi)$ and $C_F>0$ is a scalar.
		\item[(e)] The subgradient $g_h(x)$ is such that the following holds almost surely for all $x \in X$
		\begin{align}
			&\EXP{g_H(x,\xi) \mid x}= g_h(x), \label{assumption1:e1}\\
			&\EXP{\|g_H(x,\xi)\|_*^2} \leq C_H^2,\label{assumption1:e2}
		\end{align}
		where $g_h(x) \in \partial h(x)$, $g_H(x,\xi) \in \partial H(x,\xi)$ and $C_H>0$ is a scalar.
	\end{itemize}
\end{assumption}
\begin{remark}\label{assumptionandJensen}
	Note that using Jensen's inequality, from Assumption \ref{assum:properties}(d,e) we have
	\begin{align*}
		&\|g_f(x)\|_*^2=\|\EXP{g_F(x,\xi)}\|_*^2 \leq \EXP{\|g_F(x,\xi)\|_*^2} \leq C_F^2, \\
		&\|g_h(x)\|_*^2=\|\EXP{g_H(x,\xi)}\|_*^2 \leq \EXP{\|g_H(x,\xi)\|_*^2} \leq C_H^2.
	\end{align*} 
\end{remark}
In the following result, we show that under our assumptions, both problems \eqref{def:SL} and \eqref{regularized form} have unique optimal solutions. The proof is provided in Appendix \ref{proof of lem:unique sol for h}.
\begin{lemma}[{\bf Uniqueness of  $\mathbf{x_\lambda^*}$ and $\mathbf{x_h^*}$}]\label{lem:unique sol for h} Let Assumption \ref{assum:properties}(a,b,c) hold. Consider problems \eqref{regularized form} and \eqref{def:SL}. Then,
	\begin{itemize} 
		\item[(a)] Problem \eqref{regularized form} has a unique optimal solution $x_\lambda^*$, for any $\lambda>0$.
		\item[(b)] Problem \eqref{def:SL} has a unique optimal solution $x_h^*$.
	\end{itemize}
\end{lemma}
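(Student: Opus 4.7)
The plan is to handle both parts by the same two-step template: show existence of a minimizer by a Weierstrass-type argument on a compact set, and then show uniqueness by strong convexity.

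For part (a), I would first observe that the composite $f_\lambda = f + \lambda h$ is strongly convex on $X$ with parameter $\lambda \mu_h > 0$. This follows by adding the convexity inequality for $f$ at any point (using a subgradient, which exists by Assumption \ref{assum:properties}(b)) to the strong convexity inequality for $\lambda h$ from Assumption \ref{assum:properties}(c). Next, because $f$ and $h$ are finite-valued convex functions defined (at least) on the open set containing the compact set $X \subset \mathbf{R}^n$, and because both are subdifferentiable on $X$, they are continuous on $X$; hence $f_\lambda$ is continuous on $X$. Combined with compactness of $X$ (Assumption \ref{assum:properties}(a)), the Weierstrass theorem gives existence of an optimal solution $x_\lambda^*$. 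Uniqueness then follows by a standard contradiction argument: if $x_1 \neq x_2$ were both optimal, applying strong convexity to the midpoint $\tfrac{1}{2}(x_1+x_2) \in X$ would yield
\[
f_\lambda\!\left(\tfrac{x_1+x_2}{2}\right) \leq \tfrac{1}{2}f_\lambda(x_1)+\tfrac{1}{2}f_\lambda(x_2) - \tfrac{\lambda\mu_h}{8}\|x_1-x_2\|^2 < f_\lambda^*,
\]
contradicting optimality.

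For part (b), I would proceed similarly but on the inner-level solution set. First, the same Weierstrass argument applied to $f$ (convex, hence continuous on $X$, and $X$ compact) guarantees that $X^* = \arg\min_{y\in X} f(y)$ is nonempty. The set $X^*$ is closed and convex because it can be written as $X \cap \{x \in \mathbf{R}^n : f(x) \leq f^*\}$, where both components are closed and convex (the latter as a sublevel set of a continuous convex function). Since $X^*$ is a nonempty, closed, and bounded (by compactness of $X$) subset of $\mathbf{R}^n$, it is itself compact. Minimizing the continuous function $h$ over $X^*$ yields existence of an optimal solution $x_h^*$ to \eqref{def:SL}, and uniqueness follows again by strong convexity of $h$ on the convex set $X^*$, via exactly the same midpoint contradiction as in part (a).

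I do not anticipate a serious obstacle here; the only subtle point to articulate cleanly is the continuity of $f$ and $h$ on $X$ from the subdifferentiability assumption (justifying Weierstrass), and the closedness/convexity of $X^*$ as a sublevel-set intersection (justifying that the outer problem is a well-defined minimization over a nonempty compact convex set so that the midpoint argument applies).
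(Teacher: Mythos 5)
Your proposal is correct and follows essentially the same route as the paper: strong convexity of $f_\lambda=f+\lambda h$ (with modulus $\lambda\mu_h$) over the compact convex set $X$ for part (a), and for part (b) the identification $X^*=X\cap\{x: f(x)\le f^*\}$ as a nonempty compact convex set over which the strongly convex $h$ is minimized. The only difference is presentational: the paper delegates existence and uniqueness to Theorem 2.2.6 of Nesterov, whereas you spell out the Weierstrass existence argument and the midpoint uniqueness argument explicitly.
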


In the next lemma, we show two inequalities that will be used later in the proof of Proposition \ref{prop:xk_estimate}. The proof is presented in Appendix \ref{proof of results from convexity of f and strong convexity of h}.
\begin{lemma}\label{results from convexity of f and strong convexity of h}
	Let Assumption \ref{assum:properties}(b,c) hold. Suppose $\{\lambda_k\}$ is a sequence of nonnegative scalars. Let $x_{\lambda_k}^*$ be the unique optimal solution of problem ($P_{\lambda_k}$) for $k\geq0$.Then, we have
	\begin{align} 
		&\langle g_f(x_{\lambda_{k-1}}^*) - g_f(x_{\lambda_k}^*) ,x_{\lambda_{k-1}}^*-x_{\lambda_k}^* \rangle \geq 0 \qquad \hbox{for all } k\geq 1, \label{result from convexity of f} \\
		&\langle g_h(x_{\lambda_{k-1}}^*) - g_h(x_{\lambda_k}^*) ,x_{\lambda_{k-1}}^*-x_{\lambda_k}^* \rangle \geq \mu_h \|x_{\lambda_k}^* - x_{\lambda_{k-1}}^*\|^2 \qquad \hbox{for all } k\geq 1. \label{result from strong convexity of h}
	\end{align} 
\end{lemma}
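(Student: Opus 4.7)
The plan is to derive both inequalities directly from the defining inequalities for convex and strongly convex subdifferentials, applied twice at the points $x_{\lambda_{k-1}}^*$ and $x_{\lambda_k}^*$, and then added. The fact that these points are the unique optimal solutions of $(P_{\lambda_{k-1}})$ and $(P_{\lambda_k})$ is not actually used in the inequalities themselves; what we need is only that they belong to the common domain $X$ where Assumption \ref{assum:properties}(b,c) applies, together with the existence of subgradients $g_f, g_h$ at these points (guaranteed by subdifferentiability in Assumption \ref{assum:properties}(b,c)).

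For \eqref{result from convexity of f}, I would invoke the subgradient inequality for the convex function $f$ twice, first at $x=x_{\lambda_{k-1}}^*$ with test point $y=x_{\lambda_k}^*$, then at $x=x_{\lambda_k}^*$ with test point $y=x_{\lambda_{k-1}}^*$. This gives
\begin{align*}
f(x_{\lambda_k}^*) &\geq f(x_{\lambda_{k-1}}^*) + \langle g_f(x_{\lambda_{k-1}}^*), x_{\lambda_k}^*-x_{\lambda_{k-1}}^* \rangle, \\
f(x_{\lambda_{k-1}}^*) &\geq f(x_{\lambda_k}^*) + \langle g_f(x_{\lambda_k}^*), x_{\lambda_{k-1}}^*-x_{\lambda_k}^* \rangle.
\end{align*}
Summing the two inequalities the $f$ terms cancel and rearranging yields exactly $\langle g_f(x_{\lambda_{k-1}}^*) - g_f(x_{\lambda_k}^*), x_{\lambda_{k-1}}^*-x_{\lambda_k}^* \rangle \geq 0$, i.e., the standard monotonicity of the convex subdifferential.

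For \eqref{result from strong convexity of h}, I would repeat the same procedure but use the strong convexity inequality of Assumption \ref{assum:properties}(c). Each of the two applications contributes a term $\frac{\mu_h}{2}\|x_{\lambda_{k-1}}^*-x_{\lambda_k}^*\|^2$, so after summing we obtain
\begin{align*}
0 \geq \langle g_h(x_{\lambda_{k-1}}^*) - g_h(x_{\lambda_k}^*), x_{\lambda_k}^*-x_{\lambda_{k-1}}^* \rangle + \mu_h \|x_{\lambda_{k-1}}^*-x_{\lambda_k}^*\|^2,
\end{align*}
which after flipping the sign of the inner product gives the claimed bound.

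There is essentially no obstacle here: both statements are textbook consequences of (strong) convexity combined with subdifferentiability; the only minor things to verify are that the subgradients exist at the two points (from Assumption \ref{assum:properties}(b,c)) and that $x_{\lambda_{k-1}}^*, x_{\lambda_k}^* \in X$ (which follows from Lemma \ref{lem:unique sol for h}). The proof will therefore be short, and the main role of this lemma is as a building block for the subsequent Proposition \ref{prop:xk_estimate}.
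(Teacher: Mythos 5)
Your proposal is correct and follows essentially the same route as the paper: the paper likewise writes the subgradient inequality for $f$ (resp.\ the strong-convexity inequality for $h$) twice with the roles of $x_{\lambda_{k-1}}^*$ and $x_{\lambda_k}^*$ swapped and adds, obtaining monotonicity of $\partial f$ and strong monotonicity of $\partial h$ with modulus $\mu_h$. Your observation that optimality of the two points is not needed—only their membership in $X$ and subdifferentiability—is accurate and consistent with the paper's argument.
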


In the next result, considering a sequence of regularized problems \eqref{regularized form} for $\lambda \in \{\lambda_k\}$, we derive an upper bound on the difference between optimal solutions of two regularized problems characterized by a general norm. Importantly, we show that when $\lambda_k$ is decreasing to zero, the trajectory of optimal solutions to the regularized problems, i.e., $\{x^*_{\lambda_k}\}$, converges to the optimal solution  of the problem \eqref{def:SL}, i.e., $x_h^*$. This result is a key to the convergence analysis of our proposed algorithm. The proof is provided in Appendix \ref{proof of prop:xk_estimate}.
\begin{proposition}[{\bf Properties of sequence $\mathbf{\{x_{\lambda_k}^*\}}$}]\label{prop:xk_estimate} 
	Let Assumption \ref{assum:properties} hold. Let  $\{\lambda_k\}$ denote a nonnegative sequence for $k \geq 0$ and $x_{\lambda_k}^*$ be the unique optimal solution to problem ($P_{\lambda_k}$) for $k\geq0$.
	\begin{itemize}
		\item[(a)] Consider problem \eqref{regularized form}. Let $\mu_h$ and $C_H$ be given by Assumption \ref{assum:properties}(c,e). Then, for all $k \geq 1$
		\begin{align}\label{boundxk}\|x_{\lambda_k}^*-x_{\lambda_{k-1}}^*\|\leq \frac{C_H}{\mu_h}\left	|1-\frac{\lambda_{k-1}}{\lambda_k} \right|.\end{align}
		\item[(b)] Consider problem \eqref{def:SL}. When $\lambda_k \rightarrow0$, then the sequence $\{x_{\lambda_k}^*\}$ converges to the unique optimal solution of problem \eqref{def:SL}, i.e., $x_h^*$.
	\end{itemize}
\end{proposition}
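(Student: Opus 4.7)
The strategy is to couple the first-order optimality conditions at $x_{\lambda_k}^*$ and at $x_{\lambda_{k-1}}^*$ to produce a variational inequality relating the two, and then exploit the monotonicity of $\partial f$ together with the strong monotonicity of $\partial h$ to solve for $\|x_{\lambda_k}^*-x_{\lambda_{k-1}}^*\|$. Concretely, since $x_{\lambda_k}^*\in\argmin_{x\in X}\{f(x)+\lambda_k h(x)\}$, there exist subgradients $g_f(x_{\lambda_k}^*)\in\partial f(x_{\lambda_k}^*)$ and $g_h(x_{\lambda_k}^*)\in\partial h(x_{\lambda_k}^*)$ such that
\begin{equation*}
\langle g_f(x_{\lambda_k}^*)+\lambda_k g_h(x_{\lambda_k}^*),\, x-x_{\lambda_k}^*\rangle \;\ge\; 0 \qquad \forall x\in X,
\end{equation*}
and analogously for $x_{\lambda_{k-1}}^*$ with $\lambda_{k-1}$. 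Substituting $x=x_{\lambda_{k-1}}^*$ in the first and $x=x_{\lambda_k}^*$ in the second and adding, the $f$-terms regroup into $\langle g_f(x_{\lambda_{k-1}}^*)-g_f(x_{\lambda_k}^*),\, x_{\lambda_k}^*-x_{\lambda_{k-1}}^*\rangle$, which is $\le 0$ by \eqref{result from convexity of f}, so it can be dropped after a sign flip.

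\textbf{Finishing part (a).} This leaves an inequality of the form $\langle \lambda_{k-1}g_h(x_{\lambda_{k-1}}^*)-\lambda_k g_h(x_{\lambda_k}^*),\, x_{\lambda_k}^*-x_{\lambda_{k-1}}^*\rangle\ge 0$. The key algebraic step is to split
\begin{equation*}
\lambda_{k-1}g_h(x_{\lambda_{k-1}}^*)-\lambda_k g_h(x_{\lambda_k}^*)\;=\;\lambda_k\bigl(g_h(x_{\lambda_{k-1}}^*)-g_h(x_{\lambda_k}^*)\bigr)+(\lambda_{k-1}-\lambda_k)g_h(x_{\lambda_{k-1}}^*),
\end{equation*}
so that \eqref{result from strong convexity of h} applies to the first piece, producing a term $-\lambda_k\mu_h\|x_{\lambda_k}^*-x_{\lambda_{k-1}}^*\|^2$, while the second piece is bounded via the generalized Cauchy--Schwarz inequality $|\langle g_h(x_{\lambda_{k-1}}^*),\cdot\rangle|\le\|g_h(x_{\lambda_{k-1}}^*)\|_\ast\|\cdot\|\le C_H\|\cdot\|$, where the bound on the dual norm comes from Remark \ref{assumptionandJensen}. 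Solving for $\|x_{\lambda_k}^*-x_{\lambda_{k-1}}^*\|$ (the trivial case where it is zero being immediate) yields the claimed estimate $\|x_{\lambda_k}^*-x_{\lambda_{k-1}}^*\|\le\frac{C_H}{\mu_h}\bigl|1-\tfrac{\lambda_{k-1}}{\lambda_k}\bigr|$.

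\textbf{Plan for part (b).} The approach is standard Tikhonov-style: show that every cluster point of $\{x_{\lambda_k}^*\}$ lies in $X^*$ and is a minimizer of $h$ over $X^*$; uniqueness of such a minimizer (Lemma \ref{lem:unique sol for h}(b)) together with compactness of $X$ (Assumption \ref{assum:properties}(a)) then forces convergence of the whole sequence. The two ingredients are obtained from the single comparison: for any $x^*\in X^*$, optimality of $x_{\lambda_k}^*$ for \eqref{regularized form} gives
\begin{equation*}
f(x_{\lambda_k}^*)+\lambda_k h(x_{\lambda_k}^*)\;\le\;f(x^*)+\lambda_k h(x^*)\;=\;f^*+\lambda_k h(x^*).
\end{equation*}
Since $f(x_{\lambda_k}^*)\ge f^*$, this yields simultaneously (i) $0\le f(x_{\lambda_k}^*)-f^*\le\lambda_k\bigl(h(x^*)-h(x_{\lambda_k}^*)\bigr)$, which drives $f(x_{\lambda_k}^*)\to f^*$ because $h$ is bounded on the compact set $X$ (continuity of $h$ on $X$ follows from convexity plus boundedness of $\partial h$), and (ii) $h(x_{\lambda_k}^*)\le h(x^*)$. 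For any convergent subsequence $x_{\lambda_{k_j}}^*\to\bar x$, continuity/lower semicontinuity of $f$ gives $f(\bar x)=f^*$, so $\bar x\in X^*$; specializing (ii) to $x^*=x_h^*$ and passing to the limit yields $h(\bar x)\le h(x_h^*)$, which combined with $\bar x\in X^*$ and the uniqueness of the inner-minimizer of $h$ forces $\bar x=x_h^*$. Since the limit is the same for every subsequence, the full sequence converges.

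\textbf{Anticipated obstacle.} Part (a) is the delicate one: the main technical issue is that the two optimality conditions involve different subgradient selections and different regularization parameters, so the decomposition that isolates the strong-convexity contribution of $h$ at scale $\lambda_k$ while collecting the error term $(\lambda_{k-1}-\lambda_k)g_h(x_{\lambda_{k-1}}^*)$ must be done carefully to produce exactly $|1-\lambda_{k-1}/\lambda_k|$ rather than $|\lambda_{k-1}-\lambda_k|$ alone. Part (b) should be routine modulo justifying that $h$ is bounded on $X$, which follows because $h$ is continuous on the interior of its domain (subdifferentiable convex) and $X$ is compact.
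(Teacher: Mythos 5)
Your proposal is correct and follows essentially the same route as the paper's proof: for part (a), the coupled optimality conditions, the dropping of the $f$-monotonicity term via \eqref{result from convexity of f}, the add-and-subtract decomposition isolating $\lambda_k\bigl(g_h(x_{\lambda_{k-1}}^*)-g_h(x_{\lambda_k}^*)\bigr)$ for \eqref{result from strong convexity of h} and the residual $(\lambda_{k-1}-\lambda_k)g_h(x_{\lambda_{k-1}}^*)$ for the dual-norm bound $C_H$, followed by division, is exactly the paper's argument; for part (b), the accumulation-point-plus-uniqueness scheme is likewise the paper's. The only (harmless) deviation is that you obtain $h(x_{\lambda_k}^*)\le h(x^*)$ by a direct comparison of objective values of \eqref{regularized form}, whereas the paper derives it through the optimality conditions, monotonicity of $g_f$, and convexity of $h$ — your shortcut is a mild simplification of the same step.
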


\section{Algorithm outline} \label{sec:alg}
In this section, we present the iterative regularized stochastic mirror descent (IR-SMD) method for solving problem \eqref{def:SL}. An outline of this method is presented by Algorithm \ref{algorithm:SMD}. Recall the definition of prox mapping in \eqref{distance4}. In the IR-SMD method, at each iteration, an iterate $x_k$ is updated as follows 
\begin{align}\label{alg}
	x_{k+1} := \mathcal{P}_X(x_k, \gamma_k(g_F(x_k,\xi_k) + \lambda_k g_H(x_k, \tilde \xi_k))) \qquad \hbox{for all } k \geq 0,
\end{align}
where  $\gamma_k>0$ is a proper stepsize, $\lambda_k>0$ is an iterative regularization parameter, $g_F(x_k,\xi_k) \in \partial F(x_k,\xi_k)$, and $g_H(x_k, \tilde \xi_k) \in\partial H(x_k, \tilde \xi_k)$  and $\xi_k,\tilde \xi_k$ are two i.i.d. realizations of random variable $\xi$. A main distinction with the classical SMD method \cite{Nemir09,Nedic14,Dang15} is in terms of the additional regularized term $\lambda_k g_H(x_k, \tilde \xi_k)$ which incorporates the first-order information of the secondary objective function. We consider a weighted average sequence $\{\bar x_k\}$ defined as below: 
\begin{align}\label{weighted alg}
	\bar x_{k+1} := \sum_{t=0}^{k} \eta_{t,k} x_t, \qquad \hbox{where } \eta_{t,k} \triangleq \frac{\gamma_t^r}{\sum_{i=0}^{k} \gamma_i^r},	
\end{align}
in which $r<1$ is a constant. Note that using induction, it can be shown that relation \eqref{weighted alg} is equivalent to \eqref{def:averagingII} in Algorithm \ref{algorithm:SMD} (see e.g., Proposition 3 in \cite{Farzad4}).

An important research question in our work is that how we may update the two parameters $\g_k$ and $\lambda_k$ in order to establish convergence of the averaging sequence $\{\bar x_N\}$, generated by Algorithm \ref{algorithm:SMD}, to the unique optimal solution of problem \eqref{def:SL}. This will be addressed by Theorem \ref{thm conv for xbar}. Another important research question is concerned with the complexity analysis of Algorithm \ref{algorithm:SMD}. This will be addressed in Theorem \ref{thm rate} where under specific update rules for stepsize and regularization parameter, we derive the rate ${\cal O}\left(1/N^{0.5-\delta}\right)$ with respect to $f$ function values.

\begin{algorithm}
	\caption{Iterative Regularized Stochastic Mirror Descent Algorithm (IR-SMD)}
	\label{algorithm:SMD}
	%\begin{boxedminipage}{165mm}
	\begin{algorithmic}
		\STATE{\textbf{initialization:} Set a random initial point $x_0\in X$, $\gamma_0>0$ and $\lambda_0>0$ such that $\gamma_0\lambda_0 \leq \frac{L_\omega}{\mu_h}$, a scalar $r<1$, $\bar x_0=x_0 \in \mathbf R^n$, and $S_0=\gamma_0^r$.}
		\FOR{$k=0,1,\cdots,N-1$}
		\STATE{Generate $\xi_k$ and $\tilde \xi_k$ as realizations of random vectors $\xi$.}
		\STATE{Evaluate subgradients $g_F(x_k,\xi_k) \in \partial F(x_k,\xi_k)$ and $g_H(x_k, \tilde \xi_k) \in\partial H(x_k, \tilde \xi_k)$.}
		\STATE{Update $x_k$ using the following relation:}
		\begin{align}\label{mainstep}
		x_{k+1} := \mathcal{P}_X(x_k, \gamma_k(g_F(x_k,\xi_k) + \lambda_k g_H(x_k, \tilde \xi_k))).
		\end{align}
		\STATE{Update $S_k$ and $\bar x_{k}$ using the following recursions:}
		\begin{align}
		&S_{k+1}:=S_k+\gamma_{k+1}^r,\label{def:averagingI} \\
		&\bar x_{k+1}:=\frac{S_k \bar x_k+\g_{k+1}^r x_{k+1}}{S_{k+1}}.\label{def:averagingII} 
		\end{align}
		\STATE{Update the stepsize $\gamma_k$ and regularization parameter $\lambda_k$ (see Theorem \ref{thm conv for xbar} and \ref{thm rate}).}
		\ENDFOR
		\RETURN $\bar x_{N};$
	\end{algorithmic}
	%\end{boxedminipage}
	% \vskip17.5pt
\end{algorithm}

We make the following assumption on random variable $\xi$ in the algorithm.
\begin{assumption} [{\bf Random variable $\mathbf{\xi}$}]\label{assum:RVs} For all $k \geq 0$, random variables $\xi_k, \tilde \xi_k \in \mathbf{R}^{d}$ are i.i.d.
\end{assumption}
Throughout, the history of the method is considered as: 
\begin{align}\label{assumption2:f}
	\mathcal{F}_k =\{x_0,\xi_0,\tilde \xi_0, \xi_1, \tilde \xi_1, \cdots, \xi_{k-1}, \tilde \xi_{k-1} \} \qquad \hbox{for all } k\geq 1.
\end{align}

\section{Convergence analysis}
\label{sec:conv}
In this section, our main objective is to establish convergence of the sequence $\{\bar x_N\}$ to the optimal solution of problem \eqref{def:SL}. To this end, we first show convergence properties of the sequence $\{x_k\}$ in Proposition \ref{prop:xk_estimate}. This result will be a key to establish convergence of the averaging sequence which will be presented in Theorem \ref{thm conv for xbar}.

We start with the following result where we characterize the error of the algorithm using a recursive relation in terms of Bregman distance. 
\begin{lemma} [\bf{A recursive upper bound}] \label {recursive bd} Consider problem \eqref{def:SL}. Let the sequence $\{x_k\}$ be generated by Algorithm \ref{algorithm:SMD}. Let Assumption \ref{assum:properties} and \ref{assum:RVs} hold. Also assume $0 < \gamma_k\lambda_k \leq \frac{L_\omega}{\mu_h}$. Then, for all $k\geq 1$ we have
	\begin{align}\label{a recursive upper bound}
		\EXP{D(x_{k+1},x_{\lambda_k}^*)|\mathcal{F}_k}&\leq \left(1-\frac{\mu_h}{2L_\omega}\gamma_k\lambda_k\right)D(x_k,x_{\lambda_{k-1}}^*)+ \frac{2C_H^2L_\omega^3}{\mu_h^3\mu_\omega \gamma_k\lambda_k}\left(\frac{\lambda_{k-1}}{\lambda_k}-1\right)^2\nonumber\\&+\frac{4C_F^2}{\mu_\omega}\gamma_k^2+\frac{4C_H^2}{\mu_\omega}\gamma_k^2\lambda_k^2,
	\end{align}
	where $x_{\lambda_k}^*$ is the unique optimal solution of problem \eqref{regularized form} for $\lambda=\lambda_k$.
\end{lemma}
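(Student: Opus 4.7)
The plan is to combine the standard prox-inequality from mirror descent with the first-order optimality of $x_{\lambda_k}^*$, and then use the three-point Bregman identity of Lemma \ref{D prop}(b) to transfer the anchor point from $x_{\lambda_k}^*$ to $x_{\lambda_{k-1}}^*$. First I would invoke the classical property of $\mathcal{P}_X$: for $y=\mathcal{P}_X(x,\eta)$ and any $u\in X$,
\begin{align*}
D(y,u)\;\leq\; D(x,u)+\langle \eta,\,u-x\rangle+\tfrac{1}{2\mu_\omega}\|\eta\|_*^2,
\end{align*}
applied with $y=x_{k+1}$, $x=x_k$, $u=x_{\lambda_k}^*$ and $\eta=\gamma_k(g_F(x_k,\xi_k)+\lambda_k g_H(x_k,\tilde\xi_k))$. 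Taking conditional expectation with respect to $\mathcal{F}_k$, and using the unbiasedness and second-moment bounds in Assumption \ref{assum:properties}(d,e) together with $\|a+b\|_*^2\leq 2\|a\|_*^2+2\|b\|_*^2$, I get
\begin{align*}
\EXP{D(x_{k+1},x_{\lambda_k}^*)\mid\mathcal{F}_k}\leq D(x_k,x_{\lambda_k}^*)-\gamma_k\langle g_f(x_k)+\lambda_k g_h(x_k),\,x_k-x_{\lambda_k}^*\rangle+\tfrac{\gamma_k^2(C_F^2+\lambda_k^2 C_H^2)}{\mu_\omega}.
\end{align*}

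Next I would bound the inner product. The first-order optimality of $x_{\lambda_k}^*$ for $(P_{\lambda_k})$ supplies $\langle g_f(x_{\lambda_k}^*)+\lambda_k g_h(x_{\lambda_k}^*),\,x_k-x_{\lambda_k}^*\rangle\geq 0$. Combining this with monotonicity of $g_f$ (from convexity of $f$) and $\mu_h$-strong monotonicity of $g_h$ (from strong convexity of $h$, Assumption \ref{assum:properties}(c)) — the same arguments used to derive \eqref{result from convexity of f} and \eqref{result from strong convexity of h} in Lemma \ref{results from convexity of f and strong convexity of h} — yields
\begin{align*}
\langle g_f(x_k)+\lambda_k g_h(x_k),\,x_k-x_{\lambda_k}^*\rangle\;\geq\;\mu_h\lambda_k\|x_k-x_{\lambda_k}^*\|^2.
\end{align*}
Plugging this in, the recursion so far reads $\EXP{D(x_{k+1},x_{\lambda_k}^*)\mid\mathcal{F}_k}\leq D(x_k,x_{\lambda_k}^*)-\gamma_k\mu_h\lambda_k\|x_k-x_{\lambda_k}^*\|^2+\tfrac{\gamma_k^2(C_F^2+\lambda_k^2 C_H^2)}{\mu_\omega}$, still centered at the wrong anchor $x_{\lambda_k}^*$.

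The third step is to transfer the anchor from $x_{\lambda_k}^*$ to $x_{\lambda_{k-1}}^*$. I would apply Lemma \ref{D prop}(b) with $(x,y,z)=(x_k,x_{\lambda_{k-1}}^*,x_{\lambda_k}^*)$ to get the three-point identity
\begin{align*}
D(x_k,x_{\lambda_k}^*)=D(x_k,x_{\lambda_{k-1}}^*)+D(x_{\lambda_{k-1}}^*,x_{\lambda_k}^*)+\langle \nabla\omega(x_{\lambda_{k-1}}^*)-\nabla\omega(x_k),\,x_{\lambda_k}^*-x_{\lambda_{k-1}}^*\rangle,
\end{align*}
and bound the cross term via Cauchy--Schwarz in the $(\|\cdot\|,\|\cdot\|_*)$ pairing followed by Young's inequality with a parameter $\theta>0$. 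The smoothness of $\omega$ in Lemma \ref{D prop}(a) converts the $\|x_k-x_{\lambda_{k-1}}^*\|^2$ factor back into $D(x_k,x_{\lambda_{k-1}}^*)$, while the $\|x_k-x_{\lambda_k}^*\|^2$ term on the right-hand side of the recursion absorbs a portion of the contraction through the reverse Young estimate
\begin{align*}
\|x_k-x_{\lambda_k}^*\|^2\;\geq\;\tfrac{1}{1+\theta}\|x_k-x_{\lambda_{k-1}}^*\|^2-\tfrac{1}{\theta}\|x_{\lambda_{k-1}}^*-x_{\lambda_k}^*\|^2,
\end{align*}
together with $\|x_k-x_{\lambda_{k-1}}^*\|^2\geq \tfrac{2}{L_\omega}D(x_k,x_{\lambda_{k-1}}^*)$ from Lemma \ref{D prop}(a). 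Choosing $\theta$ so that the surviving coefficient of $D(x_k,x_{\lambda_{k-1}}^*)$ equals $1-\tfrac{\mu_h\gamma_k\lambda_k}{2L_\omega}$ (which requires $\gamma_k\lambda_k\leq L_\omega/\mu_h$ so the factor remains nonnegative) consolidates all remaining error into a multiple of $\|x_{\lambda_{k-1}}^*-x_{\lambda_k}^*\|^2$. Finally, Proposition \ref{prop:xk_estimate}(a) converts this into $\tfrac{C_H^2}{\mu_h^2}\!\left(\tfrac{\lambda_{k-1}}{\lambda_k}-1\right)^2$, producing the claimed recursion.

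The main obstacle is the bookkeeping in this last step: the Young's parameter $\theta$ must be coupled with the split of $\|x_k-x_{\lambda_k}^*\|^2$ so that (i) exactly a factor $\tfrac{\mu_h\gamma_k\lambda_k}{2L_\omega}$ remains to contract $D(x_k,x_{\lambda_{k-1}}^*)$, (ii) the noise coefficients collapse into the single factor $\tfrac{2C_H^2L_\omega^3}{\mu_h^3\mu_\omega\gamma_k\lambda_k}$ in front of $(\lambda_{k-1}/\lambda_k-1)^2$, and (iii) the $\gamma_k^2$ martingale noise accumulates no worse than $\tfrac{4C_F^2}{\mu_\omega}\gamma_k^2+\tfrac{4C_H^2}{\mu_\omega}\gamma_k^2\lambda_k^2$ after both the $\|a+b\|_*^2$ expansion and the extra $\theta$-dependent pieces absorbed from the cross term are collected. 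The stepsize condition $\gamma_k\lambda_k\leq L_\omega/\mu_h$ assumed in the statement is precisely what is needed to make this single-parameter optimization in $\theta$ feasible while keeping the contraction factor in $[0,1)$.
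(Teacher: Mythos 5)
Your architecture is the same as the paper's --- prox inequality, first-order optimality of $x_{\lambda_k}^*$, monotonicity of $g_f$ and $\mu_h$-strong monotonicity of $g_h$, the three-point identity of Lemma \ref{D prop}(b), and Proposition \ref{prop:xk_estimate}(a) --- and your first two steps are in fact tighter than the paper's: the one-shot prox inequality gives noise constant $1$ where the paper's triangle-inequality detour gives $4$, and you retain the full $-\gamma_k\mu_h\lambda_k\|x_k-x_{\lambda_k}^*\|^2$ where the paper discards half of it. The genuine problem is the anchor transfer. Your reverse-Young split of $-\|x_k-x_{\lambda_k}^*\|^2$ pays twice: it shrinks the usable contraction by $\tfrac{1}{1+\theta}$ \emph{and} injects an extra $\tfrac{\gamma_k\mu_h\lambda_k}{\theta}\|x_{\lambda_{k-1}}^*-x_{\lambda_k}^*\|^2$. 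Writing $s=\mu_h\gamma_k\lambda_k$ and optimizing over $\theta$ together with the Young parameter in the cross term, subject to keeping a net contraction of $\tfrac{s}{2L_\omega}$ on $D(x_k,x_{\lambda_{k-1}}^*)$, the coefficient of $\|x_{\lambda_{k-1}}^*-x_{\lambda_k}^*\|^2$ produced by your splits bottoms out near $\tfrac{5}{2}\cdot\tfrac{L_\omega^3}{\mu_\omega s}$, whereas \eqref{a recursive upper bound} demands $\tfrac{2L_\omega^3}{\mu_\omega s}$; at the admissible corner $\mu_\omega=L_\omega$, $s=L_\omega$ the target is $2L_\omega$ and your scheme cannot get below about $\tfrac{5}{2}L_\omega$. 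The spare slack in your (smaller) $\gamma_k^2$ noise terms cannot absorb the excess, since a multiple of $(\lambda_{k-1}/\lambda_k-1)^2$ is not dominated by a multiple of $\gamma_k^2$. So you obtain a recursion of the correct \emph{form} --- which suffices for everything downstream --- but not the lemma with its stated constants.

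The paper sidesteps the reverse Young entirely. It first converts the (halved) strong-convexity gain via $D\le\tfrac{L_\omega}{2}\|\cdot\|^2$ to reach $\bigl(1-\tfrac{s}{L_\omega}\bigr)D(x_k,x_{\lambda_k}^*)$, which is nonnegative precisely because $s\le L_\omega$; only then does it apply the three-point identity to $D(x_k,x_{\lambda_k}^*)$, choosing the single Young parameter $\sqrt{\mu_h\mu_\omega\gamma_k\lambda_k/(2L_\omega^3)}$ so the cross term contributes $\tfrac{s}{2L_\omega}D(x_k,x_{\lambda_{k-1}}^*)$ on one side and $\tfrac{L_\omega^3}{\mu_\omega s}\|x_{\lambda_k}^*-x_{\lambda_{k-1}}^*\|^2$ on the other. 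The multiplicative composition $\bigl(1-\tfrac{s}{L_\omega}\bigr)\bigl(1+\tfrac{s}{2L_\omega}\bigr)\le 1-\tfrac{s}{2L_\omega}$ then yields the contraction for free, and the drift coefficient collects to $\tfrac{L_\omega}{2}+\tfrac{L_\omega^3}{\mu_\omega s}\le\tfrac{2L_\omega^3}{\mu_\omega s}$ using $\mu_\omega s\le L_\omega^2$. Replace your reverse-Young step with this composition and the rest of your argument goes through and recovers \eqref{a recursive upper bound} exactly.
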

\begin{proof} 
	Let $k\geq 1 $ be given and $z^*$ be the solution for \eqref{distance4}. From optimality conditions we have
	\begin{align*}
		\langle y + \nabla_{z} D(x,z^*),z-z^* \rangle \geq 0 \qquad \hbox{for all }  x,z\in X, y \in \mathbf{R}^n.
	\end{align*}			
	By Lemma \ref{D prop}(c) we can replace $\nabla_{z^*} D(x,z^*)$ by $\nabla\omega(z^*)-\nabla\omega(x)$. So we obtain
	\begin{align*}
		\langle y + \nabla \omega(z^*)-\nabla \omega(x),z-z^* \rangle \geq 0 \qquad \hbox{for all } x,z\in X, y \in \mathbf{R}^n.
	\end{align*}
	Set $x:= x_k$ and $y:=\gamma_k(g_F(x_k,\xi_k) + \lambda_k g_H(x_k, \tilde \xi_k))$. Note that from \eqref{mainstep} and \eqref{distance4} we have $z^*=x_{k+1}$. We obtain
	\begin{align} \label{proof rate1}
		\langle \gamma_k(g_F(x_k,\xi_k) + \lambda_k g_H(x_k, \tilde \xi_k)) + \nabla \omega(x_{k+1})-\nabla \omega(x_k),z-x_{k+1} \rangle \geq 0 \qquad  \hbox{for all }  z\in X.
	\end{align}
	Consider problem \eqref{regularized form} when $\lambda=\lambda_k$. From optimality conditions we have
	\begin{align}\label{proof rate2}
		\langle g_f(x_{\lambda_k}^*) + \lambda_k g_h(x_{\lambda_k}^*),x-x_{\lambda_k}^* \rangle \geq 0 \qquad  \hbox{for all } x \in X.
	\end{align}
	Letting $z:=x_{\lambda_k}^*$ in \eqref{proof rate1}, $x:=x_{k+1}$ in \eqref{proof rate2} and multiplying it by $\gamma_k$, and adding the resulting inequalities together, we obtain
	\begin{align*}
		\langle \gamma_kg_F(x_k,\xi_k) + \gamma_k\lambda_k g_H(x_k, \tilde \xi_k) &- \gamma_k g_f(x_{\lambda_k}^*) - \gamma_k\lambda_k g_h(x_{\lambda_k}^*)\\&+\nabla \omega(x_{k+1})-\nabla \omega(x_k) ,x_{\lambda_k}^*-x_{k+1} \rangle \geq 0.
	\end{align*} 
	By Lemma \ref{D prop}(b), we know that $\langle \nabla \omega(x_{k+1})-\nabla \omega(x_k) ,x_{\lambda_k}^*-x_{k+1} \rangle= D(x_k,x_{\lambda_k}^*)-D(x_{k+1},x_{\lambda_k}^*)-D(x_k,x_{k+1})$ and also from strong convexity of $\omega$ we have $D(x_k,x_{k+1}) \geq \frac{\mu_\omega}{2} \|x_{k+1}-x_k\|^2$. Combining these relations with the preceding inequality and rearranging the terms we have
	\begin{align*}
		D(x_{k+1},x_{\lambda_k}^*) &\leq D(x_k,x_{\lambda_k}^*) -\frac{\mu_\omega}{2} \|x_{k+1}-x_k\|^2 \\&+\gamma_k\langle g_F(x_k,\xi_k)-g_f(x_{\lambda_k}^*),x_{\lambda_k}^*-x_{k+1} \rangle\\&+ \gamma_k\lambda_k\langle g_H(x_k, \tilde \xi_k)-g_h(x_{\lambda_k}^*),x_{\lambda_k}^*-x_{k+1} \rangle.
	\end{align*}
	By adding and subtracting $x_k$ in the last two terms of the right-hand side 
	we obtain
	\begin{align*}
		D(x_{k+1},x_{\lambda_k}^*) &\leq D(x_k,x_{\lambda_k}^*) -\frac{\mu_\omega}{2} \|x_{k+1}-x_k\|^2 +\gamma_k\langle g_F(x_k,\xi_k)-g_f(x_{\lambda_k}^*),x_{\lambda_k}^*-x_k \rangle\\&+\underbrace{\gamma_k\langle g_F(x_k,\xi_k)-g_f(x_{\lambda_k}^*),x_k-x_{k+1} \rangle}_\text{\hbox{Term1}} \\& +  \gamma_k\lambda_k\langle g_H(x_k, \tilde \xi_k)-g_h(x_{\lambda_k}^*),x_{\lambda_k}^*-x_k \rangle \\&+ \underbrace{\gamma_k\lambda_k\langle g_H(x_k, \tilde \xi_k)-g_h(x_{\lambda_k}^*),x_k-x_{k+1} \rangle}_\text{\hbox{Term2}}.
	\end{align*}
	Note that from Fenchel's inequality, $\langle a,b \rangle \leq \frac{1}{2\alpha}\|a\|^2 +\frac{\alpha}{2}\|b\|_*^2$, for any $a,b \in \mathbf{R}^n$ and $\alpha>0$. Therefore, by applying this relation for Term1 and Term2
	\begin{align*}
		D(x_{k+1},x_{\lambda_k}^*) &\leq D(x_k,x_{\lambda_k}^*) -\frac{\mu_\omega}{2} \|x_{k+1}-x_k\|^2 +
		\gamma_k\langle g_F(x_k,\xi_k)-g_f(x_{\lambda_k}^*),x_{\lambda_k}^*-x_k\rangle\\
		&+ \frac{\gamma_k^2}{\mu_\omega}\|g_F(x_k,\xi_k)-g_f(x_{\lambda_k}^*)\|_*^2 + \frac{\mu_\omega}{4}\|x_{k+1}-x_k\|^2 \\& +
		\gamma_k\lambda_k\langle g_H(x_k, \tilde \xi_k)-g_h(x_{\lambda_k}^*),x_{\lambda_k}^*-x_k \rangle \\&+ \frac{\gamma_k^2\lambda_k^2}{\mu_\omega}\|g_H(x_k, \tilde \xi_k)-g_h(x_{\lambda_k}^*)\|_*^2 + \frac{\mu_\omega}{4}\|x_{k+1}-x_k\|^2.
	\end{align*}
	We obtain
	\begin{align*}
		D(x_{k+1},x_{\lambda_k}^*) &\leq D(x_k,x_{\lambda_k}^*) +\frac{\gamma_k^2}{\mu_\omega}\|g_F(x_k,\xi_k)-g_f(x_{\lambda_k}^*)\|_*^2 \\& +\frac{\gamma_k^2\lambda_k^2}{\mu_\omega}\|g_H(x_k, \tilde \xi_k)-g_h(x_{\lambda_k}^*)\|_*^2+\gamma_k\langle g_F(x_k,\xi_k)-g_f(x_{\lambda_k}^*),x_{\lambda_k}^*-x_k\rangle \\& + \gamma_k\lambda_k\langle g_H(x_k, \tilde \xi_k)-g_h(x_{\lambda_k}^*),x_{\lambda_k}^*-x_k \rangle .
	\end{align*}
	Using the triangular inequality, we have
	\begin{align*}
		D(x_{k+1},x_{\lambda_k}^*) &\leq D(x_k,x_{\lambda_k}^*) +2\frac{\gamma_k^2}{\mu_\omega}\|g_F(x_k,\xi_k)\|_*^2+2\frac{\gamma_k^2}{\mu_\omega}\|g_f(x_{\lambda_k}^*)\|_*^2\\&+2\frac{\gamma_k^2\lambda_k^2}{\mu_\omega}\|g_H(x_k, \tilde \xi_k)\|_*^2+2\frac{\gamma_k^2\lambda_k^2}{\mu_\omega}\|g_h(x_{\lambda_k}^*)\|_*^2\\&+\gamma_k\langle g_F(x_k,\xi_k)-g_f(x_{\lambda_k}^*),x_{\lambda_k}^*-x_k\rangle\\&+ \gamma_k\lambda_k\langle g_H(x_k, \tilde \xi_k)-g_h(x_{\lambda_k}^*),x_{\lambda_k}^*-x_k \rangle .
	\end{align*}
	By Assumption \ref{assum:properties}(d,e) and using the relations in Remark \ref{assumptionandJensen} we have
	\begin{align*}
		D(x_{k+1},x_{\lambda_k}^*) &\leq D(x_k,x_{\lambda_k}^*)+2\frac{\gamma_k^2}{\mu_\omega}C_F^2+2\frac{\gamma_k^2\lambda_k^2}{\mu_\omega}C_H^2+2\frac{\gamma_k^2}{\mu_\omega}\|g_F(x_k,\xi_k)\|_*^2 \\& +2\frac{\gamma_k^2\lambda_k^2}{\mu_\omega}\|g_H(x_k, \tilde \xi_k)\|_*^2+\gamma_k\langle g_F(x_k,\xi_k)-g_f(x_{\lambda_k}^*),x_{\lambda_k}^*-x_k\rangle \\& + \gamma_k\lambda_k\langle g_H(x_k, \tilde \xi_k)-g_h(x_{\lambda_k}^*),x_{\lambda_k}^*-x_k \rangle .
	\end{align*}
	By taking conditional expectation on $\mathcal{F}_k$ and using relations \eqref{assumption1:d2} and \eqref{assumption1:e2}, we have
	\begin{align*}
		\EXP{D(x_{k+1},x_{\lambda_k}^*)|\mathcal{F}_k} &\leq D(x_k,x_{\lambda_k}^*)+4\frac{\gamma_k^2}{\mu_\omega}C_F^2+4\frac{\gamma_k^2\lambda_k^2}{\mu_\omega}C_H^2 \\& +\gamma_k\langle \EXP{g_F(x_k,\xi_k)|\mathcal{F}_k}-g_f(x_{\lambda_k}^*),x_{\lambda_k}^*-x_k\rangle \\& +  \gamma_k\lambda_k \left \langle \EXP{g_H(x_k, \tilde \xi_k)|\mathcal{F}_k}-g_h(x_{\lambda_k}^*),x_{\lambda_k}^*-x_k \right \rangle.
	\end{align*}
	Using relations \eqref{assumption1:d1} and \eqref{assumption1:e1}, we obtain
	\begin{align*}
		\EXP{D(x_{k+1},x_{\lambda_k}^*)|\mathcal{F}_k} &\leq D(x_k,x_{\lambda_k}^*)+4\frac{\gamma_k^2}{\mu_\omega}C_F^2+4\frac{\gamma_k^2\lambda_k^2}{\mu_\omega}C_H^2\\&+\gamma_k\langle g_f(x_k)-g_f(x_{\lambda_k}^*),x_{\lambda_k}^*-x_k\rangle\\&+ \gamma_k\lambda_k\langle g_h(x_k)-g_h(x_{\lambda_k}^*),x_{\lambda_k}^*-x_k \rangle.
	\end{align*}
	Similar to the proof of Lemma \ref{results from convexity of f and strong convexity of h}, by convexity of $f$ and strong convexity of $h$ we know that $\langle g_f(x_k)-g_f(x_{\lambda_k}^*),x_{\lambda_k}^*-x_k\rangle \leq 0$ and $\langle g_h(x_k)-g_h(x_{\lambda_k}^*),x_{\lambda_k}^*-x_k \rangle \leq -\mu_h\|x_k-x_{\lambda_k}^*\|^2 \leq -\frac{\mu_h}{2}\|x_k-x_{\lambda_k}^*\|^2$, so
	\begin{align*}
		\EXP{D(x_{k+1},x_{\lambda_k}^*)|\mathcal{F}_k} \leq D(x_k,x_{\lambda_k}^*)+4\frac{\gamma_k^2}{\mu_\omega}C_F^2+4\frac{\gamma_k^2\lambda_k^2}{\mu_\omega}C_H^2-\frac{\gamma_k\lambda_k\mu_h}{2}\|x_k-x_{\lambda_k}^*\|^2.
	\end{align*}
	From Lemma \ref{D prop}(a), $D(x,y) \leq \frac{L_\omega}{2}\|x-y\|^2$ for all $x,y \in X$ so,
	\begin{align} \label{proofrate3}
		\EXP{D(x_{k+1},x_{\lambda_k}^*)|\mathcal{F}_k} \leq \left(1-\frac{\gamma_k\lambda_k\mu_h}{L_\omega}\right)D(x_k,x_{\lambda_k}^*)+4\frac{\gamma_k^2}{\mu_\omega}C_F^2+4\frac{\gamma_k^2\lambda_k^2}{\mu_\omega}C_H^2.
	\end{align}
	Next we relate $D(x_k,x_{\lambda_k}^*)$ to $D(x_k,x_{\lambda_{k-1}}^*)$. By Lemma \ref{D prop}(b) we have
	\begin{align*}
		D(x_k,x_{\lambda_k}^*)=D(x_k,x_{\lambda_{k-1}}^*)+D(x_{\lambda_{k-1}}^*,x_{\lambda_k}^*)+ \underbrace{\langle \nabla\omega(x_{\lambda_{k-1}}^*)-\nabla\omega(x_k), x_{\lambda_k}^*-x_{\lambda_{k-1}}^* \rangle}_\text{\hbox{Term3}}.
	\end{align*}	
	By multiplying and dividing the term $\sqrt{\frac{\mu_h\mu_\omega \gamma_k\lambda_k}{2L_\omega^3}}$ in Term3 and using Fenchel's inequality we obtain
	\begin{align*}
		D(x_k,x_{\lambda_k}^*) &\leq D(x_k,x_{\lambda_{k-1}}^*)+D(x_{\lambda_{k-1}}^*,x_{\lambda_k}^*)+ \frac{\mu_h\mu_\omega \gamma_k\lambda_k}{4L_\omega^3} \|\nabla\omega(x_{\lambda_{k-1}}^*)-\nabla\omega(x_k)\|_*^2 \\&+\frac{L_\omega^3}{\mu_h\mu_\omega \gamma_k\lambda_k} \|x_{\lambda_k}^*-x_{\lambda_{k-1}}^*\|^2,
	\end{align*}
	where $L_\omega$ is the Lipschitz parameter of $\nabla \omega$ defined in \eqref{distance2}. By definition of Lipschitzian property we know that $\|\nabla \omega(x) -\nabla \omega(y)\|_* \leq L_\omega \|x-y\|$ for all $x,y \in X$. Therefore, from the preceding relation we obtain
	\begin{align*}
		D(x_k,x_{\lambda_k}^*) & \leq D(x_k,x_{\lambda_{k-1}}^*)+D(x_{\lambda_{k-1}}^*,x_{\lambda_k}^*)\\& + \frac{\mu_h\mu_\omega \gamma_k\lambda_k}{4L_\omega} \|x_{\lambda_{k-1}}^*-x_k\|^2 +\frac{L_\omega^3}{\mu_h\mu_\omega \gamma_k\lambda_k} \|x_{\lambda_k}^*-x_{\lambda_{k-1}}^*\|^2.
	\end{align*} 
	From $D(x,y) \leq \frac{L_\omega}{2}\|x-y\|^2$ for all $x,y \in X$, and also using Proposition \ref{prop:xk_estimate}(a) we have
	\begin{align*}
		D(x_k,x_{\lambda_k}^*) &\leq D(x_k,x_{\lambda_{k-1}}^*)+\frac{L_\omega C_H^2}{2\mu_h^2}\left|1-\frac{\lambda_{k-1}}{\lambda_k}\right|^2 \\& + \frac{\gamma_k\lambda_k\mu_h\mu_\omega}{4L_\omega} \|x_{\lambda_{k-1}}^*-x_k\|^2 +\frac{L_\omega^3 C_H^2}{\gamma_k\lambda_k\mu_h^3\mu_\omega} \left|1-\frac{\lambda_{k-1}}{\lambda_k}\right|^2.
	\end{align*}	
	From Lemma \ref{D prop}(a) we have $\|x_{\lambda_{k-1}}^*-x_k\|^2 \leq \frac{2}{\mu_\omega}D(x_k,x_{\lambda_{k-1}}^*)$. Taking this into account and by rearranging the terms in the preceding relation we have
	\begin{align*}
		D(x_k,x_{\lambda_k}^*) \leq \left(1+\frac{\mu_h \gamma_k\lambda_k}{2L_\omega}\right) D(x_k,x_{\lambda_{k-1}}^*)+\frac{L_\omega C_H^2}{2\mu_h^2}\left(1+\frac{2L_\omega^2}{\mu_h\mu_\omega \gamma_k\lambda_k}\right)\left(\frac{\lambda_{k-1}}{\lambda_k}-1\right)^2.
	\end{align*}	
	Replacing the preceding inequality in \eqref{proofrate3} since $\gamma_k\lambda_k \leq \frac{L_\omega}{\mu_h}$, and considering Lipschitzian property of $\omega$ we obtain
	\begin{align*}
		\EXP{D(x_{k+1},x_{\lambda_k}^*)|\mathcal{F}_k} &\leq \left(1-\frac{\mu_h \gamma_k\lambda_k}{L_\omega}\right) \left(1+\frac{\mu_h\gamma_k\lambda_k}{2L_\omega}\right)D(x_k,x_{\lambda_{k-1}}^*) \nonumber\\&+\left(1-\frac{\mu_h\gamma_k\lambda_k}{L_\omega}\right)\frac{L_\omega C_H^2}{2\mu_h^2}\left(1+\frac{2L_\omega^2}{\mu_h\mu_\omega \gamma_k\lambda_k}\right)\left(\frac{\lambda_{k-1}}{\lambda_k}-1\right)^2\\& +4\frac{\gamma_k^2}{\mu_\omega}C_F^2+4\frac{\gamma_k^2\lambda_k^2}{\mu_\omega}C_H^2.
	\end{align*}
	By rearranging the terms we have
	\begin{align*}
		\EXP{D(x_{k+1},x_{\lambda_k}^*)|\mathcal{F}_k} &\leq \underbrace{ \left(1-\frac{\mu_h\gamma_k\lambda_k}{2L_\omega} - \frac{(\mu_h\gamma_k\lambda_k)^2}{2L_\omega^2}\right)D(x_k,x_{\lambda_{k-1}}^*)}_\text{\hbox{Term4}} \nonumber\\&+\underbrace{\left(1-\frac{\mu_h\gamma_k\lambda_k}{L_\omega}\right)\frac{L_\omega C_H^2}{2\mu_h^2}\left(1+\frac{2L_\omega^2}{\mu_h\mu_\omega\gamma_k\lambda_k}\right)\left(\frac{\lambda_{k-1}}{\lambda_k}-1\right)^2}_\text{\hbox{Term5}}\\& +\frac{4C_F^2}{\mu_\omega}\gamma_k^2+\frac{4C_H^2}{\mu_\omega}\gamma_k^2\lambda_k^2.
	\end{align*}
	We can drop the nonpositive term $- \frac{(\mu_h\gamma_k\lambda_k)^2}{2L_\omega^2}$ in Term4. Also note that  we can drop the nonpositive term $-\frac{\mu_h\gamma_k\lambda_k}{L_\omega}$ in Term5. We obtain
	\begin{align*}
		\EXP{D(x_{k+1},x_{\lambda_k}^*)|\mathcal{F}_k} &\leq \left(1-\frac{\mu_h\gamma_k\lambda_k}{2L_\omega} \right)D(x_k,x_{\lambda_{k-1}}^*) \nonumber\\&+\underbrace{\frac{L_\omega C_H^2}{2\mu_h^2}\left(1+\frac{2L_\omega^2}{\mu_h\mu_\omega\gamma_k\lambda_k}\right)\left(\frac{\lambda_{k-1}}{\lambda_k}-1\right)^2}_\text{\hbox{Term6}}+\frac{4C_F^2}{\mu_\omega}\gamma_k^2+\frac{4C_H^2}{\mu_\omega}\gamma_k^2\lambda_k^2.
	\end{align*}
	Note that we have $\mu_\omega \leq L_\omega$. Combining this relation with the assumption that $\gamma_k\lambda_k \leq \frac{L_\omega}{\mu_h}$, we have $ 1 \leq \frac{2L_\omega^2}{\mu_h\mu_\omega\gamma_k\lambda_k}$. From this relation and the preceding relation, we obtain
	\begin{align*}
		\EXP{D(x_{k+1},x_{\lambda_k}^*)|\mathcal{F}_k} &\leq \left(1-\frac{\mu_h}{2L_\omega}\gamma_k\lambda_k\right)D(x_k,x_{\lambda_{k-1}}^*) \\& +\frac{2C_H^2L_\omega^3}{\mu_h^3\mu_\omega \gamma_k\lambda_k}\left(\frac{\lambda_{k-1}}{\lambda_k}-1\right)^2+\frac{4C_F^2}{\mu_\omega}\gamma_k^2+\frac{4C_H^2}{\mu_\omega}\gamma_k^2\lambda_k^2.
	\end{align*}
\end{proof}

The next result will be utilized to establish the convergence of Algorithm \ref{algorithm:SMD}. To this end, we will employ this result in proving convergence of the iterate $x_k$ in Proposition \ref{conv rate}.

\begin{lemma} [\textbf{Lemma 11, pg.\ 50 of~\cite{Polyak87}}] \label {lemma conv lemma}
	Let $\{\nu_k\}$ be a sequence of nonnegative random variables, where $\EXP{\nu_0}<\infty$, and let $\{\alpha_k\}$ and $\{\beta_k\}$ be deterministic scalar sequences such that: 
	\begin{align*}
		&\EXP{\nu_{k+1}|\nu_0,\dots,\nu_k} \leq (1-\alpha_k)\nu_k + \beta_k \qquad \hbox{for all } k \geq 0,\\
		&0 \leq \alpha_k \leq 1, \ \beta_k \geq0, \ \sum_{k=0}^{\infty}\alpha_k=\infty, \ \sum_{k=0}^{\infty}\beta_k <\infty, \ \hbox{and} \ \lim_{k \to \infty} \frac{\beta_k}{\alpha_k}=0.
	\end{align*}
	Then, $\nu_k \rightarrow 0$ almost surely, and $\lim_{k\to \infty}\EXP{\nu_k}=0$.
\end{lemma}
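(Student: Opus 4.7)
The statement is a standard stochastic quasi-Feller lemma (essentially the Robbins--Siegmund convergence theorem specialized to this contractive recursion), so the strategy is to handle the mean convergence by purely deterministic arguments on $u_k \triangleq \EXP{\nu_k}$, and the almost sure convergence via a supermartingale/Robbins--Siegmund argument.

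The plan is to first take unconditional expectations of the recursion, which yields a deterministic inequality $u_{k+1} \leq (1-\alpha_k) u_k + \beta_k$ with $u_0 = \EXP{\nu_0} < \infty$. I would show $u_k \to 0$ by an $\varepsilon$-argument: fix $\varepsilon>0$ and, using $\beta_k/\alpha_k \to 0$, pick $N$ large enough that $\beta_k \leq \varepsilon\alpha_k$ for all $k\geq N$. Then the inequality becomes $u_{k+1}-\varepsilon \leq (1-\alpha_k)(u_k-\varepsilon)$ for $k\geq N$. Iterating and invoking $1-\alpha_k \leq \exp(-\alpha_k)$ together with $\sum_k \alpha_k = \infty$ forces $\prod_{i=N}^{k}(1-\alpha_i)\to 0$, so $\limsup_k u_k \leq \varepsilon$. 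Since $\varepsilon$ is arbitrary and $u_k\geq 0$, this gives $\lim_{k\to\infty} \EXP{\nu_k}=0$.

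For the almost sure part, I would reinterpret the hypothesis as the Robbins--Siegmund template. Rewriting $\EXP{\nu_{k+1}\mid \nu_0,\dots,\nu_k} \leq \nu_k - \alpha_k\nu_k + \beta_k$ with all terms nonnegative and $\sum_k \beta_k < \infty$ places us in the classical Robbins--Siegmund setting, from which one concludes two things simultaneously: $\{\nu_k\}$ converges almost surely to some nonnegative random variable $\nu_\infty$, and $\sum_{k=0}^{\infty} \alpha_k \nu_k < \infty$ almost surely. Since $\sum_k \alpha_k = \infty$, the summability $\sum_k \alpha_k \nu_k < \infty$ a.s.\ forces $\liminf_k \nu_k = 0$ a.s. Combining this with the a.s.\ convergence of $\nu_k$ yields $\nu_\infty = 0$ a.s., i.e., $\nu_k \to 0$ a.s. (As a cross-check one may invoke Fatou: $\EXP{\nu_\infty} \leq \liminf_k \EXP{\nu_k} = 0$, so $\nu_\infty \equiv 0$ a.s.)

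The main obstacle, if one does not wish to cite Robbins--Siegmund as a black box, is verifying almost sure convergence of $\{\nu_k\}$ from scratch. This requires constructing an auxiliary nonnegative supermartingale: define $M_k \triangleq \nu_k + \sum_{j\geq k} \beta_j$ and observe that, since $\beta_j\geq 0$ and $\sum_j \beta_j < \infty$, the tail $T_k \triangleq \sum_{j\geq k}\beta_j$ is finite and nonincreasing, and the conditional expectation bound gives $\EXP{M_{k+1}\mid \mathcal{F}_k} \leq \nu_k - \alpha_k\nu_k + T_k = M_k - \alpha_k\nu_k \leq M_k$. Thus $\{M_k\}$ is a nonnegative supermartingale and converges a.s.\ by Doob's theorem; since $T_k\to 0$, so does $\nu_k$ a.s.\ to some limit $\nu_\infty$. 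A second application of the supermartingale gives $\sum_k \alpha_k \EXP{\nu_k}\leq \EXP{M_0}<\infty$, and the telescoping inequality $\EXP{\sum_k \alpha_k\nu_k}<\infty$ then ensures $\sum_k \alpha_k\nu_k <\infty$ a.s., closing the argument as above.
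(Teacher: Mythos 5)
Your proposal is correct, but note that the paper does not prove this statement at all: it is imported verbatim as Lemma~11, p.~50 of Polyak's \emph{Introduction to Optimization}, so there is no in-paper argument to compare against. What you have written is a legitimate self-contained proof of that cited result. The mean-convergence half is the standard Chung-type deterministic argument: taking total expectations gives $u_{k+1}\leq(1-\alpha_k)u_k+\beta_k$ with every $u_k$ finite by induction from $\EXP{\nu_0}<\infty$, and your $\varepsilon$-shift $u_{k+1}-\varepsilon\leq(1-\alpha_k)(u_k-\varepsilon)$ combined with $\prod_{i=N}^{k}(1-\alpha_i)\leq\exp\bigl(-\sum_{i=N}^{k}\alpha_i\bigr)\to 0$ is sound (the case $u_k\leq\varepsilon$ is absorbing since $1-\alpha_k\in[0,1]$). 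The almost-sure half via the compensated process $M_k=\nu_k+\sum_{j\geq k}\beta_j$ is exactly the Robbins--Siegmund construction, and your verification $\EXP{M_{k+1}\mid\mathcal F_k}\leq M_k-\alpha_k\nu_k$ is correct, giving a.s.\ convergence of $\nu_k$ and $\sum_k\alpha_k\nu_k<\infty$ a.s., whence $\liminf_k\nu_k=0$ and the limit is zero. One small economy you already flag yourself: once $\EXP{\nu_k}\to 0$ is in hand, Fatou applied to the a.s.\ limit gives $\nu_\infty=0$ directly, so the $\liminf$ argument through $\sum_k\alpha_k\nu_k<\infty$ and $\sum_k\alpha_k=\infty$ is not actually needed; conversely, if you keep that route, the hypotheses $\lim_k\beta_k/\alpha_k=0$ is used only for the mean statement. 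No gaps.
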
 
An extension of Lemma \ref{lemma conv lemma} is proposed in the following result. We will employ this result in Proposition \ref{conv rate}(c) to derive a rate statement. This lemma is proved in Appendix \ref{proof of lemma general conv lemma}. 
\begin{lemma} \label {lemma general conv lemma} 
	Let $\{\nu_k\}$ be a sequence of nonnegative random variables, where \\ $\EXP{\nu_0}<\infty$, and let $\{\alpha_k\}$ and $\{\beta_k\}$ be deterministic scalar sequences such that: 
	\begin{align}\label{conv lemma ineq}
		\EXP{\nu_{k+1}|\nu_0,\dots,\nu_k} \leq (1-\alpha_k)\nu_k + \beta_k \qquad \hbox{for all } k \geq 0,
	\end{align}
	and also there exist some constant $0<\rho<1$, such that for all $k\geq 1$
	\begin{align*}
		0 \leq \alpha_k \leq 1,\ \beta_k \geq 0 \hbox{ and } 
		\frac{\beta_{k-1}}{\alpha_{k-1}} \leq \frac{\beta_k}{\alpha_k}(1+ \rho \alpha_k).
	\end{align*}
	Then, $\EXP{\nu_{k+1}} \leq \frac{\beta_k}{\alpha_k} \tau$, where $\tau \triangleq \max \left \{\frac{\EXP{\nu_1} \alpha_0}{\beta_0}, \frac{1}{1-\rho}\right \}$.
\end{lemma}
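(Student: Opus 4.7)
The plan is to prove this by induction on $k$, exploiting the structural relationship between the ratio $\beta_k/\alpha_k$ and the recurrence itself.

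For the base case $k=0$, the desired inequality $\EXP{\nu_1} \leq \frac{\beta_0}{\alpha_0}\tau$ is immediate from the definition of $\tau$, since $\tau \geq \frac{\EXP{\nu_1}\alpha_0}{\beta_0}$ directly rearranges to the claim.

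For the inductive step, assume $\EXP{\nu_k} \leq \frac{\beta_{k-1}}{\alpha_{k-1}}\tau$. Taking total expectation of \eqref{conv lemma ineq} and applying the induction hypothesis, I would get
\begin{align*}
\EXP{\nu_{k+1}} \leq (1-\alpha_k)\EXP{\nu_k} + \beta_k \leq (1-\alpha_k)\frac{\beta_{k-1}}{\alpha_{k-1}}\tau + \beta_k.
\end{align*}
The key step is to invoke the hypothesis $\frac{\beta_{k-1}}{\alpha_{k-1}} \leq \frac{\beta_k}{\alpha_k}(1+\rho\alpha_k)$, which yields an upper bound involving the product $(1-\alpha_k)(1+\rho\alpha_k)$. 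Expanding this product and dropping the nonpositive $-\rho\alpha_k^2$ term gives $(1-\alpha_k)(1+\rho\alpha_k) \leq 1-(1-\rho)\alpha_k$, so that
\begin{align*}
\EXP{\nu_{k+1}} \leq \frac{\beta_k}{\alpha_k}\tau\bigl(1-(1-\rho)\alpha_k\bigr) + \beta_k = \frac{\beta_k}{\alpha_k}\tau + \beta_k\bigl(1-(1-\rho)\tau\bigr).
\end{align*}

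The induction closes precisely because $\tau \geq \frac{1}{1-\rho}$ by its definition, which forces the bracket $1-(1-\rho)\tau$ to be nonpositive, and the residual term $\beta_k\bigl(1-(1-\rho)\tau\bigr)$ can be discarded. This is where the two candidates in the $\max$ defining $\tau$ each play their distinct role: the first ensures the base case, while the second drives the inductive step. I expect no genuine obstacle beyond carrying out the algebra carefully; the only subtlety is recognizing in advance that $\tau$ must dominate $\frac{1}{1-\rho}$ in order for the recurrence to close, which motivates the form of $\tau$ in the statement.
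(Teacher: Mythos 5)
Your proposal is correct and follows essentially the same induction as the paper's proof: same base case from the first term in the $\max$, same substitution of the ratio hypothesis, and the same use of $\tau \geq \tfrac{1}{1-\rho}$ to make the residual term nonpositive. The only cosmetic difference is that you discard the $-\rho\alpha_k^2$ term one step earlier than the paper does.
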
 
In the following, we make a set of assumptions on the stepsize and the regularization parameter used in Algorithm \ref{algorithm:SMD}. These assumptions will be used in Proposition \ref{conv rate}(a,b) to establish convergence in an almost sure sense and a mean sense. In Proposition \ref{prop condition for sequences}(i), we provide a class of sequences that satisfy all these conditions.
\begin{assumption}\label{assumptions for conv} Assume that for all $k \geq 0$ we have
	%\begin{align*}
	%			&(a)\ \gamma_k,\lambda_k\geq 0 \hbox{ are non-increasing and } \gamma_0\lambda_0 \leq \frac{L_\omega}{\mu_h}.     &(b)\ \sum_{k=0}^{\infty}\gamma_k\lambda_k= \infty.    \\
	%			&(c)\ \sum_{k=0}^{\infty}\frac{1}{\gamma_k\lambda_k}\left(\frac{\lambda_{k-1}}{\lambda_k}-1\right)^2<\infty. &(d)\ \sum_{k=0}^{\infty} \gamma_k^2 < \infty.         \\
	%			&(e)\ \lim_{k\to \infty} \frac{1}{\gamma_k^2\lambda_k^2}\left(\frac{\lambda_{k-1}}{\lambda_k}-1\right)^2=0. & (f)\ \lim_{k\to \infty} \frac{\gamma_k}{\lambda_k}=0.
	%\end{align*}
	
	\noindent$(a)\ \{\gamma_k\}$ and $\{\lambda_k\}$ are positive and non-increasing sequences where $\gamma_0\lambda_0 \leq \frac{L_\omega}{\mu_h}.$    \\  $(b)\ \sum_{k=0}^{\infty}\gamma_k\lambda_k= \infty.$   
	$(c)\ \sum_{k=0}^{\infty}\frac{1}{\gamma_k\lambda_k}\left(\frac{\lambda_{k-1}}{\lambda_k}-1\right)^2<\infty.$ $(d)\ \sum_{k=0}^{\infty} \gamma_k^2 < \infty.$         \\
	$(e)\ \lim_{k\to \infty} \frac{1}{\gamma_k^2\lambda_k^2}\left(\frac{\lambda_{k-1}}{\lambda_k}-1\right)^2=0.$ $(f)\ \lim_{k\to \infty} \frac{\gamma_k}{\lambda_k}=0.$
	
\end{assumption}	
To derive a rate statement in Proposition \ref{conv rate}(c), we make use of the following assumption on the stepsize and regularization parameter. In Proposition \ref{prop condition for sequences}(ii), we will provide an example of the two sequences for which these conditions are met.
\begin{assumption}\label{assumptions for general conv} Assume that for all $k \geq 0$ we have
	%	\begin{align*}
	%			&(a)\ \gamma_k,\lambda_k\geq 0 \hbox{ are non-increasing and } \gamma_0\lambda_0 \leq \frac{L_\omega}{\mu_h}. \\
	%			&(b)\ \hbox{There is } B_1 \geq 0 \hbox{ such that } \frac{1}{\gamma_k^3\lambda_k}\left(\frac{\lambda_{k-1}}{\lambda_k}-1\right)^2 \leq B_1, \hbox{ for } k \geq k_1.\\
	%			&(c)\ \frac{\gamma_{k-1}}{\lambda_{k-1}} \leq \frac{\gamma_k}{\lambda_k}(1+\rho\frac{\mu_h}{2L_\omega}\gamma_k\lambda_k), \hbox{ where } 0<\rho<1, \hbox{ for } k \geq k_2.\\
	%			&(d)\ \lim_{k\to \infty} \frac{\gamma_k}{\lambda_k}=0.
	%	\end{align*}
	
	\noindent	$(a)\ \{\gamma_k\}$ and $\{\lambda_k\}$ are positive and non-increasing sequences where $\gamma_0\lambda_0 \leq \frac{L_\omega}{\mu_h}.$ \\
	$(b)\ $There are a scalar $B_1>0$ and an integer $k_1$ such that $\frac{1}{\gamma_k^3\lambda_k}\left(\frac{\lambda_{k-1}}{\lambda_k}-1\right)^2 \leq B_1$ for $k \geq k_1.$\\
	$(c)\ $There are a scalar $0<\rho<1$ and an integer $k_2$ such that $\frac{\gamma_{k-1}}{\lambda_{k-1}} \leq \frac{\gamma_k}{\lambda_k} \left(1+\rho\frac{\mu_h}{2L_\omega}\gamma_k\lambda_k\right)$ for $k \geq k_2.$\\
	$(d)\ \lim_{k\to \infty} \frac{\gamma_k}{\lambda_k}=0.$
	
\end{assumption}
In the following result, we show convergence of the sequence $\{x_k\}$ generated in Algorithm \ref{algorithm:SMD}. This result will be used in the next sections in order to establish the convergence of the averaging sequence $\bar x_k$ generated by Algorithm \ref{algorithm:SMD} to the optimal solution of the problem \eqref{def:SL}.

\begin{proposition}[{\bf Convergence in almost sure and mean senses for $\mathbf{\{x_k\}}$}]\label{conv rate} Let Assumption \ref{assum:properties} and \ref{assum:RVs} hold. Consider problem \eqref{def:SL} and let $\{x_k\}$ be generated by Algorithm \ref{algorithm:SMD}. Additionally:
	\begin{itemize}
		\item[(a)] Let Assumption \ref{assumptions for conv} hold. Then $D(x_k,x_{\lambda_{k-1}}^*)$ converges to zero almost surely, and
		\begin{align*}
			\lim_{k\to \infty}\EXP{D(x_k,x_{\lambda_{k-1}}^*)}=0.
		\end{align*}
		\item[(b)] Let Assumption \ref{assumptions for conv} hold and $\lim_{k\to \infty} \lambda_k =0$. Then $x_k$ converges to the optimal solution of problem \eqref{def:SL}, i.e., $x_h^*$ almost surely.
		\item[(c)] Let Assumption \ref{assumptions for general conv} hold for some $k_1$ and $k_2$. Then, for all $k \geq \bar{k} \triangleq \max \{k_1,k_2\}$
		\begin{align*}
			\EXP{D(x_{k+1},x_{\lambda_{k}}^*)} \leq  \frac{\gamma_k}{\lambda_k} \tau,
		\end{align*}
		where,
		\begin{align}\label{def theta}
			\tau \triangleq \max \left \{ \frac{2L_\omega M^2 \lambda_{\bar k-1}}{\gamma_{\bar k-1}},\frac{2L_\omega(2C_H^2 L_\omega^3 +4C_F^2\mu_h^3 +4C_H^2 \mu_h^3 \lambda_{\bar k-1}^2)}{\mu_\omega \mu_h^4(1-\rho)} \right \},
		\end{align}
		in which $M$ is such that $\|x\| \leq M$ for all $x \in X$.
	\end{itemize}
\end{proposition}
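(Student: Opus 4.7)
All three parts rest on the one-step recursion in Lemma \ref{recursive bd}, which already packages the algorithm's stochastic behavior into a deterministic-looking inequality in $\nu_k \triangleq D(x_k,x_{\lambda_{k-1}}^*)$ with multiplicative factor $\alpha_k \triangleq \frac{\mu_h}{2L_\omega}\gamma_k\lambda_k$ and additive noise $\beta_k \triangleq \frac{2C_H^2 L_\omega^3}{\mu_h^3\mu_\omega \gamma_k\lambda_k}\bigl(\frac{\lambda_{k-1}}{\lambda_k}-1\bigr)^2+\frac{4C_F^2}{\mu_\omega}\gamma_k^2+\frac{4C_H^2}{\mu_\omega}\gamma_k^2\lambda_k^2$. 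With this identification, parts (a) and (b) amount to invoking Lemma \ref{lemma conv lemma}, while part (c) amounts to invoking Lemma \ref{lemma general conv lemma} after first replacing $\beta_k$ by a convenient upper bound $\tilde\beta_k$ whose ratio with $\alpha_k$ is explicitly proportional to $\gamma_k/\lambda_k$.

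\textbf{Part (a).} My plan is to check the four hypotheses of Lemma \ref{lemma conv lemma} one by one against Assumption \ref{assumptions for conv}. Monotonicity in (a) together with $\gamma_0\lambda_0\le L_\omega/\mu_h$ yields $\alpha_k\le 1/2$; divergence of $\sum \alpha_k$ is exactly (b); summability of $\sum\beta_k$ uses (c) for the first term, (d) for the second, and (d) combined with $\lambda_k\le\lambda_0$ for the third; finally $\beta_k/\alpha_k$ splits into a constant multiple of $\frac{1}{\gamma_k^2\lambda_k^2}\bigl(\frac{\lambda_{k-1}}{\lambda_k}-1\bigr)^2+\frac{\gamma_k}{\lambda_k}+\gamma_k\lambda_k$, whose three summands tend to zero by (e), (f), and (``$\gamma_k\to 0$ from (d)'' plus $\lambda_k$ bounded) respectively. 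Lemma \ref{lemma conv lemma} then yields both almost sure and mean convergence of $\nu_k$ to $0$.

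\textbf{Part (b).} By Lemma \ref{D prop}(a), $\|x_k-x_{\lambda_{k-1}}^*\|^2\le \frac{2}{\mu_\omega}\nu_k$, so part (a) forces $x_k-x_{\lambda_{k-1}}^*\to 0$ almost surely. Proposition \ref{prop:xk_estimate}(b) provides $x_{\lambda_{k-1}}^*\to x_h^*$ deterministically under $\lambda_k\to 0$, and a single triangle inequality closes the argument.

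\textbf{Part (c).} This is the main technical step. Assumption \ref{assumptions for general conv}(b) upgrades the hardest summand of $\beta_k$ via $\bigl(\frac{\lambda_{k-1}}{\lambda_k}-1\bigr)^2\le B_1\gamma_k^3\lambda_k$ for $k\ge k_1$, and Assumption \ref{assumptions for general conv}(a) combined with $\lambda_k\le\lambda_{\bar k-1}$ handles the $\gamma_k^2\lambda_k^2$ term, yielding $\beta_k\le \tilde\beta_k \triangleq K\gamma_k^2$ for an explicit constant $K=K(C_F,C_H,\mu_h,\mu_\omega,L_\omega,B_1,\lambda_{\bar k-1})$. The algebraic payoff is that $\tilde\beta_k/\alpha_k=\frac{2L_\omega K}{\mu_h}\cdot\frac{\gamma_k}{\lambda_k}$, so the monotonicity condition $\tilde\beta_{k-1}/\alpha_{k-1}\le(\tilde\beta_k/\alpha_k)(1+\rho\alpha_k)$ collapses to $\frac{\gamma_{k-1}}{\lambda_{k-1}}\le\frac{\gamma_k}{\lambda_k}(1+\rho\alpha_k)$, which is exactly Assumption \ref{assumptions for general conv}(c) for $k\ge k_2$. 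With $\nu_k$ initially bounded by $2L_\omega M^2$ via Lemma \ref{D prop}(a) and the diameter bound on $X$, Lemma \ref{lemma general conv lemma} applied from the shifted starting index $\bar k=\max\{k_1,k_2\}$ gives $\EXP{\nu_{k+1}}\le(\tilde\beta_k/\alpha_k)\tau''$, where $\tau''=\max\{2L_\omega M^2 \alpha_{\bar k-1}/\tilde\beta_{\bar k-1},\,1/(1-\rho)\}$. Multiplying through by the factor $\frac{2L_\omega K}{\mu_h}$ absorbs the constants, recognizes $M^2\mu_h\lambda_{\bar k-1}/(K\gamma_{\bar k-1})$ against $\frac{2L_\omega M^2\lambda_{\bar k-1}}{\gamma_{\bar k-1}}$, and yields the announced $\tau$.

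\textbf{Main obstacle.} The delicate point is not convergence per se but bookkeeping in part (c): taming $\bigl(\frac{\lambda_{k-1}}{\lambda_k}-1\bigr)^2/(\gamma_k\lambda_k)$ so that the resulting $\tilde\beta_k/\alpha_k$ is an \emph{exact} multiple of $\gamma_k/\lambda_k$, aligning Assumption \ref{assumptions for general conv}(c) with the monotonicity requirement of Lemma \ref{lemma general conv lemma}, and shifting indices to $\bar k-1$ so that the initial expectation $\EXP{\nu_{\bar k}}\le 2L_\omega M^2$ produces the first entry of $\tau$ as written. Once these alignments are in place, the rest is constant-tracking.
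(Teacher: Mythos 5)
Your proposal is correct and follows essentially the same route as the paper's proof: the same identification of $\nu_k$, $\alpha_k$, $\beta_k$ from Lemma \ref{recursive bd}, the same invocation of Lemma \ref{lemma conv lemma} for part (a), the same triangle-inequality argument via Proposition \ref{prop:xk_estimate}(b) for part (b), and the same replacement $\beta_k \leq K\gamma_k^2$ followed by Lemma \ref{lemma general conv lemma} from the shifted index $\bar k - 1$ for part (c). The only (immaterial) divergence is that you argue $\gamma_k\lambda_k \to 0$ from $\gamma_k \to 0$ and boundedness of $\lambda_k$, while the paper bounds $\gamma_k\lambda_k \leq \lambda_0\gamma_k/\lambda_k$; both are valid.
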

\begin{proof}
	\noindent (a) Considering relation \eqref{a recursive upper bound}, to show this we apply Lemma \ref{lemma conv lemma}. Let $\nu_k \triangleq D(x_k,x_{\lambda_{k-1}}^*), \alpha_k\triangleq\frac{\mu_h}{2 L_\omega} \gamma_k\lambda_k$ and $\beta_k\triangleq\frac{2C_H^2L_\omega^3}{\mu_h^3\mu_\omega \gamma_k\lambda_k}\left(\frac{\lambda_{k-1}}{\lambda_k}-1\right)^2+\frac{4C_F^2}{\mu_\omega}\gamma_k^2+\frac{4C_H^2}{\mu_\omega}\gamma_k^2\lambda_k^2$. By \eqref{a recursive upper bound} we have
	\begin{align*}
		\EXP{\nu_{k+1}|\nu_1,\dots,\nu_k} \leq (1-\alpha_k)\nu_k + \beta_k \qquad \hbox{for all } k \geq 1.
	\end{align*}
	Note that $ \beta_k \geq0$. Also by Assumption \ref{assumptions for conv}(a), since $\{\gamma_k\}$ and $\{\lambda_k\} $ are positive and $\gamma_0\lambda_0 \leq \frac{2L_\omega}{\mu_h}$ we have $0 \leq \alpha_k \leq 1$. Assumption \ref{assumptions for conv}(b) is sufficient to have $\sum_{k=1}^{\infty}\alpha_k=\infty$. From Assumption \ref{assumptions for conv}(c,d) $\sum_{k=1}^{\infty}\beta_k <\infty$. In addition, we have 
	\begin{align*}
		\lim_{k \to \infty} \frac{\beta_k}{\alpha_k}&=\lim_{k \to \infty} \frac{\frac{2C_H^2L_\omega^3}{\mu_h^3\mu_\omega \gamma_k\lambda_k}\left(\frac{\lambda_{k-1}}{\lambda_k}-1\right)^2+\frac{4C_F^2}{\mu_\omega}\gamma_k^2+\frac{4C_H^2}{\mu_\omega}\gamma_k^2\lambda_k^2}{\frac{\mu_h}{2 L_\omega} \gamma_k\lambda_k}\\
		&=\frac{4C_H^2L_\omega^4}{\mu_h^4\mu_\omega}\lim_{k \to \infty} \frac{1}{\gamma_k^2\lambda_k^2}\left(\frac{\lambda_{k-1}}{\lambda_k}-1\right)^2 +\frac{8C_F^2L_\omega}{\mu_\omega\mu_h}\lim_{k\to \infty} \frac{\gamma_k}{\lambda_k}+
		\frac{8C_H^2 L_\omega}{\mu_\omega\mu_h}\lim_{k\to \infty} \gamma_k\lambda_k.
	\end{align*}
	Applying Assumption \ref{assumptions for conv}(e,f) we only need to prove that $\lim_{k\to \infty} \gamma_k\lambda_k=0$. Since $\{\lambda_k\}$ is non-increasing we have, $\lambda_k \leq\lambda_0$ for all $k\geq 0$. Then $\lambda_k^2 \geq \lambda_0^2$. From Assumption \ref{assumptions for conv}(a) $\gamma_k\lambda_k^2 \leq \gamma_k\lambda_0^2$ implying that $\lambda_0 \frac{\gamma_k}{\lambda_k}$ is an upper bound for $\gamma_k \lambda_k$. So by Assumption \ref{assumptions for conv}(f), $\lim_{k \to \infty} \gamma_k\lambda_k=0$. Consequently $\lim_{k \to \infty} \frac{\beta_k}{\alpha_k}=0$. Therefore, all conditions of Lemma \ref{lemma conv lemma} are met indicating that $D(x_k,x_{\lambda_{k-1}}^*)$ goes to zero almost surely, and $\lim_{k \to \infty} \EXP{D(x_k,x_{\lambda_{k-1}}^*)}=0$.
	
	\noindent (b) Invoking the triangle inequality, we obtain
	\begin{align*}
		\|x_k -x_h^*\|^2 \leq 2\|x_k - x_{\lambda_{k-1}}^* \|^2 + 2 \|x_{\lambda_{k-1}}^*-x_h^*\|^2 \qquad \hbox{for all } k \geq 0.
	\end{align*}
	Using Lemma \ref{D prop}(a), from the preceding inequality we obtain
	\begin{align} \label{rel xk xh}
		\|x_k -x_h^*\|^2 \leq \frac{4}{\mu_\omega}D(x_k,x_{\lambda_{k-1}}^*)+ 2 \|x_{\lambda_{k-1}}^*-x_h^*\|^2 \qquad \hbox{for all } k \geq 0.
	\end{align}
	From Proposition \ref{prop:xk_estimate}(b), we know that when $\lambda_k$ goes to zero, then the sequence $\{x_{\lambda_k}^*\}$ converges to the unique optimal solution of problem \eqref{def:SL}, i.e., $x_h^*$. In addition, from part (a), $D(x_k,x_{\lambda_{k-1}}^*)$ converges to zero almost surely. Therefore, from relation \eqref{rel xk xh}, $\|x_k -x_h^*\|$ converges to zero almost surely.
	
	\noindent (c) We apply Lemma \ref{lemma general conv lemma} to show the desired inequality. Consider relation \eqref{a recursive upper bound}. From Assumption \ref{assumptions for general conv}(b) and that $\{\lambda_k\}$ is non-increasing, we have
	\begin{align*}
		\EXP{D(x_{k+1},x_{\lambda_k}^*)|\mathcal{F}_k}&\leq \left(1-\frac{\mu_h}{2L_\omega}\gamma_k\lambda_k\right)D(x_k,x_{\lambda_{k-1}}^*)\\&+
		\left(\frac{2C_H^2L_\omega^3}{\mu_h^3\mu_\omega}B_1+\frac{4C_F^2}{\mu_\omega}+\frac{4C_H^2}{\mu_\omega}\lambda_{\bar k-1}^2\right)\gamma_k^2,
	\end{align*}
	for all $k\geq \bar k$. For $k \geq \bar k -1$, let us define 
	\begin{align*}
		\nu_k\triangleq D(x_k,x_{\lambda_{k-1}}^*), \ \alpha_k\triangleq \frac{\mu_h}{2 L_\omega} \gamma_k\lambda_k, \hbox{ and } \beta_k\triangleq \left(\frac{2C_H^2L_\omega^3}{\mu_h^3\mu_\omega}B_1+\frac{4C_F^2}{\mu_\omega}+\frac{4C_H^2}{\mu_\omega}\lambda_{\bar k-1}^2\right) \gamma_k^2.
	\end{align*} 
	Now by the preceding inequality we have
	\begin{align*}
		\EXP{\nu_{k+1}|\nu_{\bar k-1},\dots,\nu_k} \leq (1-\alpha_k)\nu_k + \beta_k \qquad \hbox{for all } k \geq \bar k-1.
	\end{align*}
	By Assumption \ref{assumptions for general conv}(a,b), it is easy to see that $0 \leq \alpha_k \leq 1$ and $\beta_k \geq 0$. Also
	\begin{align*}
		\frac{\beta_{k-1}}{\alpha_{k-1}}&= \frac{2 L_\omega\left(\frac{2C_H^2L_\omega^3}{\mu_h^3\mu_\omega}B_1+\frac{4C_F^2}{\mu_\omega}+\frac{4C_H^2}{\mu_\omega}\lambda_{\bar k-1}^2\right)}{\mu_h} \frac{\gamma_{k-1}}{\lambda_{k-1}} \\&\leq \frac{2 L_\omega\left(\frac{2C_H^2L_\omega^3}{\mu_h^3\mu_\omega}B_1+\frac{4C_F^2}{\mu_\omega}+\frac{4C_H^2}{\mu_\omega}\lambda_{\bar k-1}^2\right)}{\mu_h} \frac{\gamma_k}{\lambda_k}\left(1+\rho\frac{\mu_h}{2L_\omega}\gamma_k\lambda_k\right)
		= \frac{\beta_k}{\alpha_k}(1+ \rho \alpha_k),
	\end{align*}
	where we used Assumption \ref{assumptions for general conv}(c) in the second relation. Note that all conditions in Lemma \ref{lemma general conv lemma} are satisfied. Therefore, we can write
	\begin{align}\label{bound for E(D)}
		\EXP{D(x_{k+1},x_{\lambda_{k}}^*)} \leq \frac{2L_\omega(2C_H^2 L_\omega^3 B_1 +4C_F^2\mu_h^3 +4C_H^2 \mu_h^3 \lambda_{\bar k-1}^2)}{\mu_\omega \mu_h^4} \frac{\gamma_k}{\lambda_k} \hat{\tau},
	\end{align}
	where
	\begin{align*}
		\hat{\tau} & \triangleq \max\left\{\frac{\EXP{\nu_{\bar k}}\alpha_{\bar k-1}}{\beta_{\bar k-1}}, \frac{1}{1-\rho}\right\}\\&=\max\left \{\frac{\EXP{\nu_{\bar k}}\mu_h^4\mu_\omega \lambda_{\bar k-1}}{2L_\omega(2C_H^2L_\omega^3 B_1+4C_F^2\mu_h^3+4C_H^2\mu_h^3\lambda_{\bar k-1}^2)\gamma_{\bar k-1}}, \frac{1}{1-\rho}\right \}.
	\end{align*}
	Note that for $\EXP{\nu_{\bar k}}$ by Lemma \ref{D prop}(a) we have
	\begin{align*}
		\EXP{\nu_{\bar k}}= \EXP{D(x_{\bar k},x_{\lambda_{\bar k-1}}^*)} &\leq \EXP{\frac{L_\omega}{2}\|x_{\bar k}-x_{\lambda_{\bar k-1}}^*\|^2}\leq \EXP{\frac{L_\omega}{2}\left(2\|x_{\bar k}\|^2 + 2\|x_{\lambda_{\bar k-1}}^*\|^2\right)}\\ &\leq \EXP{\frac{L_\omega}{2}\left(2M^2 + 2M^2\right)}=2L_\omega M^2,
	\end{align*}
	where $M$ is an upper bound for the set $X$. From the preceding two relations, we have
	\begin{align*}
		\hat{\tau}=\max\left \{\frac{\mu_h^4\mu_\omega M^2 \lambda_{\bar k-1}}{(2C_H^2L_\omega^3 B_1+4C_F^2\mu_h^3+4C_H^2\mu_h^3\lambda_{\bar k-1}^2)\gamma_{\bar k-1}}, \frac{1}{1-\rho} \right \}.
	\end{align*}
	From the preceding relation and definition of $\tau$ in \eqref{def theta}, we have
	\begin{align*}
		\tau = \frac{2L_\omega(2C_H^2 L_\omega^3 B_1 +4C_F^2\mu_h^3 +4C_H^2 \mu_h^3 \lambda_{\bar k-1}^2)}{\mu_\omega \mu_h^4} \hat{\tau}.
	\end{align*}
	From the preceding relation and inequality \eqref{bound for E(D)}, we have
	\begin{align*}
		\EXP{D(x_{k+1},x_{\lambda_{k}}^*)} \leq \frac{\gamma_k}{\lambda_k} \tau. 
	\end{align*}
\end{proof}
Proposition \ref{conv rate} guarantees the convergence of sequence $\{x_k\}$ generated in Algorithm \ref{algorithm:SMD} under general sets of conditions on the stepsize and regularization parameter given by Assumption \ref{assumptions for conv} and \ref{assumptions for general conv}. Below, we provide particular examples of sequences that meet the conditions in Assumption \ref{assumptions for conv} and \ref{assumptions for general conv} and therefore promise the convergences properties stated in Proposition \ref{conv rate}. The proof can be found in Appendix \ref{proof of prop condition for sequences}.

\begin{proposition}[{\bf Feasible sequences for Assumption \ref{assumptions for conv} and \ref{assumptions for general conv}}]\label{prop condition for sequences}
	Assume $\{\gamma_k\}$ and $\{\lambda_k\}$ are sequences such that $\gamma_k=\frac{\gamma_0}{(k+1)^a}$ and $\lambda_k=\frac{\lambda_0}{(k+1)^b}$ where $a,b$ are scalars, $\gamma_0$ and $\lambda_0$ are positive scalars and $\gamma_0\lambda_0 \leq \frac{L_\omega}{\mu_h}$. Then
	\begin{itemize}
		\item[(i)]  The sequences $\{\gamma_k\}$ and $\{\lambda_k\}$ satisfy Assumption \ref{assumptions for conv} when $a,b>0$, $a>b$, $a>0.5$ and $a+b<1$. 
		\item[(ii)] Assumption \ref{assumptions for general conv} is satisfied when $a,b>0$, $a>b$, $a+b<1$ and $3a+b<2$.
	\end{itemize}
\end{proposition}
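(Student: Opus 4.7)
The plan is to verify each condition of Assumptions \ref{assumptions for conv} and \ref{assumptions for general conv} by directly substituting the polynomial forms $\gamma_k=\gamma_0/(k+1)^a$, $\lambda_k=\lambda_0/(k+1)^b$ and reducing every condition to the convergence of a $p$-series or the sign of an exponent, using one Taylor expansion as the main computational ingredient. Specifically, I will use
\begin{align*}
\frac{\lambda_{k-1}}{\lambda_k}-1 \;=\; \left(1+\tfrac{1}{k}\right)^{b}-1 \;=\; \frac{b}{k}+O\!\left(\frac{1}{k^2}\right),
\qquad
\left(1+\tfrac1k\right)^{a-b}=1+\frac{a-b}{k}+O\!\left(\frac{1}{k^2}\right),
\end{align*}
so that $(\lambda_{k-1}/\lambda_k-1)^2 = \Theta(1/k^2)$ as $k\to\infty$. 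All other manipulations reduce to comparing exponents.

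For part (i), I will check conditions (a)--(f) of Assumption \ref{assumptions for conv} one by one. Monotonicity and positivity in (a) follow from $a,b>0$ together with the hypothesized bound $\gamma_0\lambda_0\le L_\omega/\mu_h$. Condition (b) becomes divergence of $\sum (k+1)^{-(a+b)}$, which holds since $a+b<1$. Condition (d) becomes $\sum(k+1)^{-2a}<\infty$, which uses $a>1/2$. For (c), the Taylor estimate gives a tail behaving like $\sum (k+1)^{a+b}/k^2$, convergent because $a+b<1$ forces the exponent below $-1$. For (e), the same estimate gives a limit $\Theta((k+1)^{2(a+b)}/k^2)\to 0$, again from $a+b<1$. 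Finally (f) reduces to $(\gamma_0/\lambda_0)(k+1)^{-(a-b)}\to 0$, which is exactly $a>b$.

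For part (ii), conditions (a) and (d) are verified identically. Condition (b) reduces, via the Taylor estimate, to boundedness of $(k+1)^{3a+b}/k^2$, which uses $3a+b<2$. The main obstacle will be condition (c), which requires producing $\rho\in(0,1)$ and $k_2$ with
\begin{align*}
\left(1+\tfrac{1}{k}\right)^{a-b} \;\le\; 1+\rho\,\frac{\mu_h\,\gamma_0\lambda_0}{2L_\omega (k+1)^{a+b}}\qquad\text{for all }k\ge k_2.
\end{align*}
The plan here is to expand the left-hand side as $1+(a-b)/k+O(1/k^2)$ and observe that the right-hand correction is of order $(k+1)^{-(a+b)}$, which dominates $1/k$ precisely because $a+b<1$; hence for any fixed $\rho\in(0,1)$ the inequality holds for all sufficiently large $k$. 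This is the step that genuinely uses the strict inequality $a+b<1$, as the boundary case $a+b=1$ would force a quantitative comparison of constants rather than an asymptotic one. With that step in hand, collecting all verifications completes both parts.
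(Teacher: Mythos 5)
Your proposal is correct and follows essentially the same route as the paper's proof: substitute the polynomial forms, use the Taylor/binomial expansion $(1+1/k)^{b}-1=\mathcal{O}(1/k)$ (and $(1+1/k)^{a-b}-1=\mathcal{O}(1/k)$ for condition (c) of Assumption \ref{assumptions for general conv}), and reduce every condition to a $p$-series test or an exponent comparison, with the key observation that the $\Theta\bigl(k^{-(a+b)}\bigr)$ correction on the right-hand side of (c) dominates the $\mathcal{O}(1/k)$ term precisely because $a+b<1$. No gaps; your remark about the boundary case $a+b=1$ is a fair aside but not needed.
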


The following lemma provides sufficient conditions on the weights of an averaging sequence so that the averaging sequence converges. We will make use of this result in Theorem \ref{thm conv for xbar} where we prove convergence of the averaging sequence generated by Algorithm \ref{algorithm:SMD}. 

\begin{lemma}[{\bf Theorem 6, pg.\ 75 of~\cite{Knopp51}}] \label{lemma thm for avr}
	Let $\{u_t\}\subset \mathbf{R}^n$ be a convergent sequence with the limit point $\hat u\in\mathbf{R}^n$ and let $\{\alpha_k\}$ be a
	sequence of positive numbers where $\sum_{k=0}^\infty \alpha_k=\infty$. Suppose $\{v_k\}$ is given by 
	$v_k\triangleq \left({\sum_{t=0}^{k-1} \alpha_t u_t}\right)/{\sum_{t=0}^{k-1} \alpha_t}$ for all $k\ge1$. Then, $\lim_{k \rightarrow \infty} v_k=\hat u.$
\end{lemma}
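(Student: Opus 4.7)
The plan is to prove this classical Toeplitz-type averaging result by an $\epsilon$-splitting argument that separates a "tail" of terms (where $u_t$ is already close to $\hat{u}$) from a fixed "head" of early terms (which becomes negligible because the denominator diverges). First, I would fix an arbitrary $\epsilon>0$ and, using $u_t\to\hat u$, choose an index $T$ such that $\|u_t-\hat u\|<\epsilon/2$ for every $t\geq T$. I would then write
\begin{align*}
v_k-\hat u \;=\; \frac{\sum_{t=0}^{k-1}\alpha_t(u_t-\hat u)}{\sum_{t=0}^{k-1}\alpha_t}
\;=\; \underbrace{\frac{\sum_{t=0}^{T-1}\alpha_t(u_t-\hat u)}{\sum_{t=0}^{k-1}\alpha_t}}_{\text{Head}} \;+\; \underbrace{\frac{\sum_{t=T}^{k-1}\alpha_t(u_t-\hat u)}{\sum_{t=0}^{k-1}\alpha_t}}_{\text{Tail}},
\end{align*}
which is the central decomposition of the argument.

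Next, I would bound each piece separately. For the Tail, since $\alpha_t>0$, the triangle inequality and the bound $\|u_t-\hat u\|<\epsilon/2$ for $t\geq T$ yield $\|\text{Tail}\|\leq \tfrac{\epsilon}{2}\cdot\frac{\sum_{t=T}^{k-1}\alpha_t}{\sum_{t=0}^{k-1}\alpha_t}\leq\tfrac{\epsilon}{2}$, valid for every $k>T$. For the Head, the numerator $C_T\triangleq\sum_{t=0}^{T-1}\alpha_t\|u_t-\hat u\|$ is a fixed finite constant (depending on $T$ but not on $k$), while the denominator $\sum_{t=0}^{k-1}\alpha_t$ tends to $\infty$ by hypothesis; therefore I can choose $K>T$ large enough that $C_T/\sum_{t=0}^{K-1}\alpha_t<\epsilon/2$, and by monotonicity of the partial sums the same bound holds for all $k\geq K$. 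Combining the two estimates gives $\|v_k-\hat u\|<\epsilon$ for all $k\geq K$, which is precisely the definition of convergence of $v_k$ to $\hat u$.

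I expect no real obstacle here: the result is a textbook Toeplitz/Cesàro-averaging lemma, and the entire proof hinges on the single observation that a fixed finite sum divided by a diverging partial sum vanishes, while a weighted average of terms each smaller than $\epsilon/2$ is itself bounded by $\epsilon/2$. The only mild subtlety is that $u_t\in\mathbf{R}^n$ rather than $\mathbf{R}$, but this is handled transparently by applying the triangle inequality componentwise (or under any fixed norm) since the weights $\alpha_t$ are nonnegative scalars. No additional structure on $\{\alpha_k\}$ (such as monotonicity) is needed beyond positivity and divergence of the partial sums, so the hypotheses stated in the lemma are exactly what the argument consumes.
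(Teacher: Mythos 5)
Your proof is correct and complete: the $\epsilon$-splitting into a fixed head (killed by the diverging denominator) and a tail (a weighted average of terms within $\epsilon/2$ of $\hat u$) is exactly the standard Toeplitz/Ces\`aro argument, and every step, including the reduction to the vector-valued case via the triangle inequality, is sound. The paper itself gives no proof of this lemma, importing it directly as Theorem 6, p.~75 of the cited reference (Knopp), so there is nothing to compare against beyond noting that your argument is the textbook one and fully justifies the statement as used later in Theorem \ref{thm conv for xbar}.
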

Below, we present the main result of this section. Theorem \ref{thm conv for xbar} provides convergence in both almost sure and mean senses for the averaging sequence generated by Algorithm \ref{algorithm:SMD} to the unique optimal solution of problem \eqref{def:SL}. Importantly, we specify particular sequences for the stepsize and regularization parameter to establish the convergence properties.
\begin{theorem}[{\bf Convergence in almost sure and mean senses for $\mathbf{\{\bar x_k\}}$}] \label{thm conv for xbar}
	Consider problem \eqref{def:SL}. Let Assumption \ref{assum:properties} and \ref{assum:RVs} hold. Assume $\{\gamma_k\}$ and $\{\lambda_k\}$ are sequences such that $\gamma_k=\frac{\gamma_0}{(k+1)^a}$,  $\lambda_k=\frac{\lambda_0}{(k+1)^b}$ and $\gamma_0$ and $\lambda_0$ are positive scalars and $\gamma_0\lambda_0 \leq \frac{L_\omega}{\mu_h}$. Let $\bar x_k$ be generated by Algorithm \ref{algorithm:SMD}. If $a,b>0$, $a>b$, $a>0.5$, $a+b<1$ and $ar\leq 1$. Then, we have
	\begin{itemize}
		\item[(i)] The sequence $\{\bar x_k\}$ converges to $x_h^*$ almost surely.
		\item[(ii)] We have $\lim_{k \to \infty}\EXP{\|\bar x_{k+1}-x_h^*\|}=0$.
	\end{itemize}
\end{theorem}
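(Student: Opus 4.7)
The plan is to reduce the theorem to the already established convergence of the non-averaged iterates $\{x_k\}$ in Proposition \ref{conv rate} and then push the convergence through the weighted averaging via the Toeplitz-type result of Lemma \ref{lemma thm for avr}. First I would verify that the prescribed sequences $\gamma_k=\gamma_0/(k+1)^a$ and $\lambda_k=\lambda_0/(k+1)^b$ fall within the class described in Proposition \ref{prop condition for sequences}(i), so that Assumption \ref{assumptions for conv} is in force. Since $b>0$ yields $\lambda_k\to0$, Proposition \ref{conv rate}(b) immediately gives $x_k\to x_h^*$ almost surely, and Proposition \ref{conv rate}(a) gives $\EXP{D(x_k,x_{\lambda_{k-1}}^*)}\to 0$.

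For part (i), I would apply Lemma \ref{lemma thm for avr} pathwise with $u_t=x_t$ and weights $\alpha_t=\gamma_t^r$, observing that by induction the recursion (\ref{def:averagingI})--(\ref{def:averagingII}) produces exactly $\bar x_{k+1}=\sum_{t=0}^{k}\eta_{t,k}x_t$ with $\eta_{t,k}=\gamma_t^r/\sum_{i=0}^{k}\gamma_i^r$. The condition $ar\le 1$ gives $\sum_{k=0}^\infty \gamma_k^r=\gamma_0^r\sum_{k=0}^\infty(k+1)^{-ar}=\infty$, which is the one hypothesis of Lemma \ref{lemma thm for avr} that is not automatic. On the almost sure event on which $x_k\to x_h^*$, Lemma \ref{lemma thm for avr} then yields $\bar x_k\to x_h^*$, proving (i).

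For part (ii), I would first upgrade the convergence in $D$ to convergence of $\EXP{\|x_k-x_h^*\|}$. By Lemma \ref{D prop}(a) and Jensen's inequality,
\begin{align*}
\EXP{\|x_k-x_{\lambda_{k-1}}^*\|}\le \sqrt{\EXP{\|x_k-x_{\lambda_{k-1}}^*\|^2}}\le \sqrt{(2/\mu_\omega)\EXP{D(x_k,x_{\lambda_{k-1}}^*)}}\to 0,
\end{align*}
and Proposition \ref{prop:xk_estimate}(b) (using $\lambda_k\to 0$) gives $\|x_{\lambda_{k-1}}^*-x_h^*\|\to 0$, so by the triangle inequality $\EXP{\|x_k-x_h^*\|}\to 0$. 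Next, by convexity of the norm,
\begin{align*}
\|\bar x_{k+1}-x_h^*\|=\Bigl\|\sum_{t=0}^{k}\eta_{t,k}(x_t-x_h^*)\Bigr\|\le \sum_{t=0}^{k}\eta_{t,k}\|x_t-x_h^*\|,
\end{align*}
and taking expectations gives $\EXP{\|\bar x_{k+1}-x_h^*\|}\le \sum_{t=0}^{k}\eta_{t,k}\EXP{\|x_t-x_h^*\|}$. Since the right-hand side is a weighted average with weights $\eta_{t,k}$ based on the divergent series $\sum \gamma_t^r$ applied to a sequence tending to zero, a second invocation of Lemma \ref{lemma thm for avr} (with $u_t=\EXP{\|x_t-x_h^*\|}$ and $\alpha_t=\gamma_t^r$) concludes $\EXP{\|\bar x_{k+1}-x_h^*\|}\to 0$.

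The bulk of the machinery is already in hand, so I do not anticipate a serious obstacle; the one bookkeeping point to be careful about is confirming the divergence $\sum \gamma_k^r=\infty$ under the hypothesis $ar\le 1$ (in particular the borderline case $ar=1$), which is the precise reason the theorem imposes that condition rather than, say, $ar<1$.
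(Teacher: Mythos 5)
Your proposal is correct and follows essentially the same route as the paper's own proof: invoke Proposition \ref{prop condition for sequences}(i) to secure Assumption \ref{assumptions for conv}, apply Proposition \ref{conv rate}(a,b) to get almost sure convergence of $x_k$ and $\EXP{D(x_k,x_{\lambda_{k-1}}^*)}\to 0$, and then pass through the weighted average twice via Lemma \ref{lemma thm for avr}, using the same triangle-inequality and Jensen's-inequality steps for part (ii). The only cosmetic difference is that in part (i) you apply Lemma \ref{lemma thm for avr} to the vector sequence $x_t$ directly, whereas the paper first takes norms and applies it to the scalar sequence $\|x_t-x_h^*\|$; both are valid since the lemma is stated for sequences in $\mathbf{R}^n$.
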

\begin{proof}
	\noindent (i) First note that due to the assumptions on scalars $a,b$, all conditions of Proposition \ref{prop condition for sequences}(i) are satisfied, implying that Assumption \ref{assumptions for conv} holds. Note that from definition of $\eta_{t,k} =\gamma_t^r / \sum_{i=0}^{k} \gamma_i^r$ given by \eqref{weighted alg}, it follows that $\sum_{t=0}^{k} \eta_{t,k}=1$. We have,
	\begin{align*}
		\|\bar x_{k+1}-x_h^*\|=\left\|\sum_{t=0}^{k} \eta_{t,k} x_t-\sum_{t=0}^{k}\eta_{t,k} x_h^*\right\|=\left\|\sum_{t=0}^{k} \eta_{t,k}(x_t-x_h^*)\right\|.
	\end{align*}
	Using  the triangular inequality, we obtain
	\begin{align}\label{xbar-xh}
		\|\bar x_{k+1}-x_h^*\|\leq \sum_{t=0}^{k} \eta_{t,k}\|x_t-x_h^*\|.
	\end{align}	
	Now let $\alpha_t \triangleq \gamma_t^r$, $u_t \triangleq \|x_t-x_h^*\|$ and $v_{k+1} \triangleq \sum_{t=0}^{k} \eta_{t,k} \|x_t-x_h^*\|$. Since $ar \leq1$ we can write $\sum_{t=0}^\infty \alpha_t=\sum_{t=0}^\infty \gamma_t^{r}=\sum_{t=0}^\infty (t+1)^{-ar}=\infty$. Since $b>0$, it follows that $\lambda_t=1/(t+1)^b$ goes to zero as $t\to \infty$. So from Proposition \ref{conv rate}(b), $u_t=\|x_t-x_h^*\|$ converges to zero almost surely. Now since conditions of Lemma \ref{lemma thm for avr} are satisfied for $\hat u=0$, we conclude that $\|\bar x_{k+1}-x_h^*\|$ converges to zero almost surely, which means $\{\bar x_k\}$ converges to $x_h^*$ almost surely.
	
	\noindent (ii) 
	Consider relation \eqref{xbar-xh}, we can write
	\begin{align*}
		\EXP{\|\bar x_{k+1}-x_h^*\|}\leq \EXP{\sum_{t=0}^{k} \eta_{t,k}\|x_t-x_h^*\|}=\sum_{t=0}^{k} \eta_{t,k} \EXP{\|x_t-x_h^*\|}.
	\end{align*}	
	Let $\alpha_t \triangleq \gamma_t^r$, $u_t \triangleq \EXP{\|x_t-x_h^*\|}$ and $v_{k+1} \triangleq \sum_{t=0}^{k} \eta_{t,k} \EXP{\|x_t-x_h^*\|}$. To apply Lemma \ref{lemma thm for avr}, we first show that $\{u_t\}$ goes to zero. 
	Adding and subtracting $x^*_{\lambda_{t-1}}$ and using the triangular inequality, we have
	\begin{align*}
		u_t=\EXP{\|x_t -x_h^*\|} &\leq \EXP{\|x_t -x^*_{\lambda_{t-1}}\|+\|x^*_{\lambda_{t-1}}-x_h^*\|}   \\
		&\leq \sqrt{\frac{2}{\mu_\omega}}\EXP{\sqrt{D(x_t,x_{\lambda_{t-1}}^*)}}+ \EXP{\|x_{\lambda_{t-1}}^*-x_h^*\|}\\
		& \leq  \sqrt{\frac{2}{\mu_\omega}} \sqrt{\EXP{D(x_t,x_{\lambda_{t-1}}^*)}}+ \EXP{\|x_{\lambda_{t-1}}^*-x_h^*\|}
	\end{align*}
	where in the second inequality we used $\frac{\mu_\omega}{2}\|x_t -x^*_{\lambda_{t-1}}\|^2 \leq D(x_t,x_{\lambda_{t-1}}^*)$ by Lemma \ref{D prop}(a) and in the third inequality we applied Jensen's inequality for concave functions. From Proposition \ref{prop:xk_estimate}(b), since $\lambda_t$ goes to zero, the sequence $\{x_{\lambda_t}^*\}$ converges to the unique optimal solution of problem \eqref{def:SL}, i.e., $x_h^*$. Moreover, from Proposition \ref{conv rate}(a) we have $\lim_{t\to \infty}\EXP{D(x_t,x_{\lambda_{t-1}}^*)}=0$. Therefore, $\lim_{t\to \infty}u_t=\EXP{\|x_t -x_h^*\|}=0$. The remainder of the proof can be done in a similar vein as part (i) through applying Lemma \ref{lemma thm for avr}.
\end{proof}

\section{Rate analysis}
Our goal in this section is to provide complexity analysis for the developed  IR-SMD method. To this end, first in Lemma \ref{lemma averaging}, we derive an upper bound in terms of the objective function of problem \eqref{def:firstlevel}, i.e., $f$, evaluated at the averaging sequence $\{\bar x_N\}$ generated by Algorithm \ref{algorithm:SMD}. This result will then be employed in Theorem \ref{thm rate} to derive a rate statement for problem \eqref{def:firstlevel}. 

\label{sec:rate}
\begin{lemma} [\bf{An error bound for problem \eqref{def:firstlevel}}] \label{lemma averaging} Consider the sequence $\{\bar x_N\}$ generated by Algorithm \ref{algorithm:SMD}. Let Assumption \ref{assum:properties} and \ref{assum:RVs} hold. Also let $\{\gamma_k\}$ and $\{\lambda_k\}$ be positive and non-increasing sequences. Then, for all $N\geq 1$ and $z\in X$ we have
	\begin{align*}
		\EXP{f(\bar x_N)}-f(z)\leq &\left(\sum_{k=0}^{N-1} \gamma_k^r\right)^{-1}\left(2L_\omega M^2 \left(\gamma_{N-1}^{r-1}+\gamma_0^{r-1}\right)+ 2M_h\sum_{k=0}^{N-1}\gamma_k^r\lambda_k \right. \cr\\& \left. + \frac{C_F^2+C_H^2 \lambda_0^2}{\mu_\omega} \sum_{k=0}^{N-1}  \gamma_k^{r+1}\right),
	\end{align*}
	where $M_h$ is an upper bound for function $h$ over the set $X$ and $M$ is an upper bound for the set $X$.
\end{lemma}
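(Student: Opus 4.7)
The plan is to mirror the opening manipulations in the proof of Lemma \ref{recursive bd}, but to keep the comparison point as an arbitrary $z \in X$ and to track progress with respect to $f$ rather than the Bregman distance to $x^*_{\lambda_k}$. Starting from the optimality condition of the prox step \eqref{mainstep}, Lemma \ref{D prop}(b), and the strong convexity of $\omega$, I would first derive the one-step inequality
\[
\gamma_k\langle g_F(x_k,\xi_k)+\lambda_k g_H(x_k,\tilde\xi_k),\, x_{k+1}-z\rangle \le D(x_k,z)-D(x_{k+1},z)-\tfrac{\mu_\omega}{2}\|x_{k+1}-x_k\|^2.
\]
Splitting $x_{k+1}-z=(x_{k+1}-x_k)+(x_k-z)$ and applying Fenchel's inequality to the two cross terms absorbs their quadratic parts into $-\tfrac{\mu_\omega}{2}\|x_{k+1}-x_k\|^2$ (exactly as in Lemma \ref{recursive bd}), leaving only the inner products at $x_k-z$ together with a $1/\mu_\omega$-scaled noise contribution.

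Taking conditional expectation on $\mathcal F_k$ and using Assumption \ref{assum:properties}(d,e) replaces the stochastic subgradients by $g_f(x_k)$ and $g_h(x_k)$ and bounds the noise by $(C_F^2\gamma_k^2+C_H^2\gamma_k^2\lambda_k^2)/\mu_\omega$. Convexity of $f$ then gives $\langle g_f(x_k),x_k-z\rangle\ge f(x_k)-f(z)$, while convexity of $h$ combined with the uniform bound $|h(x_k)-h(z)|\le 2M_h$ yields $\langle g_h(x_k),x_k-z\rangle\ge h(x_k)-h(z)\ge -2M_h$, so this second term contributes at most $+2M_h\gamma_k\lambda_k$ on the right-hand side. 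Using $\lambda_k\le\lambda_0$ on the $h$-noise term and taking full expectation produces
\[
\gamma_k\bigl(\EXP{f(x_k)}-f(z)\bigr)\le \EXP{D(x_k,z)}-\EXP{D(x_{k+1},z)}+\tfrac{C_F^2+C_H^2\lambda_0^2}{\mu_\omega}\gamma_k^2+2M_h\gamma_k\lambda_k.
\]

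The next step is to multiply by $\gamma_k^{r-1}$ and sum from $k=0$ to $N-1$. The noise terms directly yield $\tfrac{C_F^2+C_H^2\lambda_0^2}{\mu_\omega}\sum_k\gamma_k^{r+1}$ and $2M_h\sum_k\gamma_k^r\lambda_k$, both matching the target. The crucial remaining piece is $T:=\sum_{k=0}^{N-1}\gamma_k^{r-1}\bigl(\EXP{D(x_k,z)}-\EXP{D(x_{k+1},z)}\bigr)$. Applying Abel summation rewrites $T$ as $\gamma_0^{r-1}\EXP{D(x_0,z)}+\sum_{k=1}^{N-1}(\gamma_k^{r-1}-\gamma_{k-1}^{r-1})\EXP{D(x_k,z)}-\gamma_{N-1}^{r-1}\EXP{D(x_N,z)}$. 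Since $r<1$ and $\{\gamma_k\}$ is non-increasing, $\{\gamma_k^{r-1}\}$ is non-decreasing so each increment is nonnegative; combined with Lemma \ref{D prop}(a) and $\|x_k-z\|\le 2M$ (giving $\EXP{D(x_k,z)}\le 2L_\omega M^2$) and the bound $\sum_{k=1}^{N-1}(\gamma_k^{r-1}-\gamma_{k-1}^{r-1})\le\gamma_{N-1}^{r-1}$, discarding the nonpositive final term yields $T\le 2L_\omega M^2(\gamma_0^{r-1}+\gamma_{N-1}^{r-1})$.

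To close, I would invoke Jensen's inequality for the convex $f$ at the weighted average $\bar x_N=\bigl(\sum_{k=0}^{N-1}\gamma_k^r\bigr)^{-1}\sum_{k=0}^{N-1}\gamma_k^r x_k$, giving $\bigl(\sum_k\gamma_k^r\bigr)\EXP{f(\bar x_N)}\le\sum_k\gamma_k^r\EXP{f(x_k)}$. Subtracting $\bigl(\sum_k\gamma_k^r\bigr)f(z)$ from both sides, combining with the summed one-step inequality, and dividing through by $\sum_{k=0}^{N-1}\gamma_k^r$ produces the claimed bound. I expect the Abel-summation step to be the main obstacle in terms of careful sign tracking: the fact that $r<1$ makes $\gamma_k^{r-1}$ non-decreasing (the ``wrong'' direction for a classical telescoping argument), and it is precisely the nonpositive boundary term $-\gamma_{N-1}^{r-1}\EXP{D(x_N,z)}$ that allows one to collapse the sum cleanly to a $(\gamma_0^{r-1}+\gamma_{N-1}^{r-1})$ front factor rather than something that grows with $N$.
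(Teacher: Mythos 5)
Your proposal is correct and follows essentially the same route as the paper's proof: prox-step optimality plus Lemma \ref{D prop}, Fenchel's inequality to absorb the $\|x_{k+1}-x_k\|^2$ terms, conditional expectation with Assumption \ref{assum:properties}(d,e), the subgradient inequalities for $f$ and $h$, weighting by $\gamma_k^{r-1}$ with the bound $\EXP{D(x_k,z)}\le 2L_\omega M^2$, and Jensen's inequality for the weighted average. The only differences are organizational (you bound the $h$-difference by $2M_h$ at each step and run the Abel summation from $k=0$, whereas the paper carries the $h$-terms to the end and treats the $k=0$ increment separately), and both yield the same $2L_\omega M^2(\gamma_{N-1}^{r-1}+\gamma_0^{r-1})$ boundary contribution.
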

\begin{proof}
	Let $k\geq1 $ be given. Along similar lines to the beginning of Lemma \ref{recursive bd}, we can write (see relation \eqref{proof rate1}) 
	\begin{align}\label{error bound1}
		\langle \gamma_k(g_F(x_k,\xi_k) + \lambda_k g_H(x_k, \tilde \xi_k)) + \nabla \omega(x_{k+1})-\nabla \omega(x_k),z-x_{k+1} \rangle \geq 0  \ \hbox{for all}, \ z\in X.
	\end{align}
	By properties of function $D$ in Lemma \ref{D prop}(a,b), we have
	\begin{align*}
		\langle \nabla \omega(x_{k+1})-\nabla \omega(x_k) ,z-x_{k+1} \rangle&= D(x_k,z)-D(x_{k+1},z)-D(x_k,x_{k+1}),\\
		D(x_k,x_{k+1}) &\geq \frac{\mu_\omega}{2} \|x_{k+1}-x_k\|^2.
	\end{align*}
	Combining the preceding two relations with inequality \eqref{error bound1}, and rearranging the terms we obtain
	\begin{align*}
		D(x_{k+1},z) - D(x_k,z) & \leq -\frac{\mu_\omega}{2} \|x_{k+1}-x_k\|^2 \\& +\gamma_k\underbrace{\langle g_F(x_k,\xi_k),z-x_{k+1} \rangle}_\text{Term1}+ \gamma_k\lambda_k\underbrace{\langle g_H(x_k, \tilde \xi_k),z-x_{k+1} \rangle}_\text{Term2}.
	\end{align*}
	By adding and subtracting $x_k$ in Term1 and Term2, we obtain
	\begin{align*}
		D(x_{k+1},z) - D(x_k,z) &\leq -\frac{\mu_\omega}{2} \|x_{k+1}-x_k\|^2 +\gamma_k\langle g_F(x_k,\xi_k),z-x_k \rangle\\&+\underbrace{\gamma_k\langle g_F(x_k,\xi_k),x_k-x_{k+1} \rangle}_\text{Term3}+ \gamma_k\lambda_k\langle g_H(x_k, \tilde \xi_k),z-x_k \rangle \\&+ \underbrace{\gamma_k\lambda_k\langle g_H(x_k, \tilde \xi_k),x_k-x_{k+1} \rangle}_\text{Term4}.
	\end{align*}
	By multiplying and dividing Term3 and Term4 by $\sqrt{\frac{2}{\mu_\omega}}$ and then applying Fenchel's inequality, i.e., $\langle a,b \rangle \leq \frac{1}{2}\|a\|^2 +\frac{1}{2}\|b\|_*^2$, we have
	\begin{align*}
		D(x_{k+1},z) - D(x_k,z)&\leq -\frac{\mu_\omega}{2} \|x_{k+1}-x_k\|^2 +
		\gamma_k\langle g_F(x_k,\xi_k),z-x_k\rangle\\&+ \frac{\gamma_k^2}{\mu_\omega}\|g_F(x_k,\xi_k)\|_*^2 + \frac{\mu_\omega}{4}\|x_{k+1}-x_k\|^2+
		\gamma_k\lambda_k\langle g_H(x_k, \tilde \xi_k),z-x_k \rangle\\&+ \frac{\gamma_k^2\lambda_k^2}{\mu_\omega}\|g_H(x_k, \tilde \xi_k)\|_*^2 + \frac{\mu_\omega}{4}\|x_{k+1}-x_k\|^2.
	\end{align*}
	Therefore, we obtain
	\begin{align*}
		D(x_{k+1},z) - D(x_k,z) &\leq \frac{\gamma_k^2}{\mu_\omega}\|g_F(x_k,\xi_k)\|_*^2+\frac{\gamma_k^2\lambda_k^2}{\mu_\omega}\|g_H(x_k, \tilde \xi_k)\|_*^2\\&+\gamma_k\langle g_F(x_k,\xi_k),z-x_k\rangle+ \gamma_k\lambda_k\langle g_H(x_k, \tilde \xi_k),z-x_k \rangle .
	\end{align*}
	By taking conditional expectations from both sides and considering Assumption \ref{assum:properties}(d,e), we have
	\begin{align}\label{error bound2}
		\EXP{D(x_{k+1},z)|\mathcal{F}_k}-D(x_k,z)& \leq \frac{\gamma_k^2}{\mu_\omega}C_F^2+\frac{\gamma_k^2\lambda_k^2}{\mu_\omega}C_H^2 \\&+\gamma_k\langle g_f(x_k),z-x_k\rangle+ \gamma_k\lambda_k\langle g_h(x_k),z-x_k \rangle. \nonumber
	\end{align}
	From the definition of subgradient for functions $f$ and $h$ we have $\langle g_f(x), y-x\rangle  \leq f(y)- f(x)$ and $\langle g_h(x), y-x\rangle  \leq h(y)- h(x)$ for all $x,y \in X$. From these relations and \eqref{error bound2} we obtain
	\begin{align*}
		\EXP{D(x_{k+1},z)|\mathcal{F}_k}-D(x_k,z) &\leq \frac{\gamma_k^2}{\mu_\omega}C_F^2+\frac{\gamma_k^2\lambda_k^2}{\mu_\omega}C_H^2\\&+\gamma_k(f(z)-f(x_k))+ \gamma_k\lambda_k\left(h(z)-h(x_k)\right).
	\end{align*}
	Rearranging the terms, we have
	\begin{align*}
		\EXP{D(x_{k+1},z)|\mathcal{F}_k}-D(x_k,z)& \leq \frac{\gamma_k^2}{\mu_\omega}C_F^2+\frac{\gamma_k^2\lambda_k^2}{\mu_\omega}C_H^2\\&+\gamma_k(f(z)+\lambda_k h(z))- \gamma_k(f(x_k)+\lambda_k h(x_k)).
	\end{align*}
	Taking expectations from both sides and applying the law of total expectation, we have
	\begin{align} \label{error bound3}
		\EXP{D(x_{k+1},z)}-\EXP{D(x_k,z)} &\leq \frac{\gamma_k^2}{\mu_\omega}C_F^2+\frac{\gamma_k^2\lambda_k^2}{\mu_\omega}C_H^2\\&+\gamma_k(f(z)+\lambda_k h(z))- \gamma_k\left(\EXP{f(x_k)+\lambda_k h(x_k)}\right).\nonumber
	\end{align}
	Multiplying both sides of the preceding inequality by $\gamma_k^{r-1}$ and adding and subtracting $\gamma_{k-1}^{r-1}\EXP{D(x_k,z)}$ to the left-hand side, we have
	\begin{align}\label{error bound4}
		&\gamma_k^{r-1}\EXP{D(x_{k+1},z)}-\gamma_{k-1}^{r-1}\EXP{D(x_k,z)}-(\gamma_k^{r-1}-\gamma_{k-1}^{r-1})\EXP{D(x_k,z)}\leq\frac{\gamma_k^{r+1}}{\mu_\omega}C_F^2\\&+\frac{\gamma_k^{r+1}\lambda_k^2}{\mu_\omega}C_H^2+\gamma_k^r(f(z)+\lambda_k h(z))- \gamma_k^r(\EXP{f(x_k)+\lambda_k h(x_k)}).\nonumber
	\end{align}
	Note that since $r<1$ and $\gamma_k$ is non-increasing, $\gamma_k^{r-1}-\gamma_{k-1}^{r-1}$ is nonnegative. Also from Lemma \ref{D prop}(a) for $\EXP{D(x_k,z)}$ we have
	\begin{align}\label{error bound5}
		\EXP{D(x_k,z)} &\leq \EXP{\frac{L_\omega}{2}\|x_k-z\|^2}\leq \EXP{\frac{L_\omega}{2}\left(2\|x_k\|^2 + 2\|z\|^2\right)}\\ &\leq \EXP{\frac{L_\omega}{2}\left(2M^2 + 2M^2\right)}=2L_\omega M^2,\nonumber
	\end{align}
	where $M$ is an upper bound for our set $X$. From the bound given by \eqref{error bound5} and relation \eqref{error bound4} we obtain
	\begin{align*}
		&\gamma_k^{r-1}\EXP{D(x_{k+1},z)}-\gamma_{k-1}^{r-1}\EXP{D(x_k,z)}-(\gamma_k^{r-1}-\gamma_{k-1}^{r-1})2L_\omega M^2\leq\frac{\gamma_k^{r+1}}{\mu_\omega}C_F^2\\&+\frac{\gamma_k^{r+1}\lambda_k^2}{\mu_\omega}C_H^2+\gamma_k^r(f(z)+\lambda_k h(z))- \gamma_k^r(\EXP{f(x_k)+\lambda_k h(x_k)}).\nonumber
	\end{align*}
	Summing the preceding inequalities over $k=1,2, \cdots,N-1$, we obtain
	\begin{align*}
		&\gamma_{N-1}^{r-1}\EXP{D(x_{N},z)}-\gamma_{0}^{r-1}\EXP{D(x_1,z)}-(\gamma_{N-1}^{r-1}-\gamma_{0}^{r-1})2L_\omega M^2 \leq \frac{C_F^2}{\mu_\omega} \sum_{k=1}^{N-1} \gamma_k^{r+1} \\& +\frac{C_H^2}{\mu_\omega} \sum_{k=1}^{N-1} \gamma_k^{r+1}\lambda_k^2+ \sum_{k=1}^{N-1}\gamma_k^r(f(z)+\lambda_k h(z))-\sum_{k=1}^{N-1} \gamma_k^r\left(\EXP{f(x_k)+\lambda_k h(x_k)}\right).\nonumber
	\end{align*}
	By removing the nonnegative terms $\gamma_{N-1}^{r-1}\EXP{D(x_{N},z)}$ and $2L_\omega M^2\gamma_{0}^{r-1}$ from the left-hand side of the preceding inequality, we have
	\begin{align}\label{sum overk}
		-\gamma_{0}^{r-1}\EXP{D(x_1,z)}&-2L_\omega M^2\gamma_{N-1}^{r-1} \leq \frac{C_F^2}{\mu_\omega} \sum_{k=1}^{N-1} \gamma_k^{r+1} +\frac{C_H^2}{\mu_\omega} \sum_{k=1}^{N-1} \gamma_k^{r+1}\lambda_k^2\\&+ \sum_{k=1}^{N-1}\gamma_k^r(f(z)+\lambda_k h(z))-\sum_{k=1}^{N-1} \gamma_k^r(\EXP{f(x_k)+\lambda_k h(x_k)}).\nonumber
	\end{align}
	For $\EXP{D(x_1,z)}$ from \eqref{error bound3} when $k=0$, we have
	\begin{align} \label{inequality for d1}
		\EXP{D(x_1,z)}&\leq \frac{\gamma_0^2}{\mu_\omega}C_F^2+\frac{\gamma_0^2 \lambda_0^2}{\mu_\omega}C_H^2+\gamma_0(f(z)+\lambda_0 h(z))\\&-\gamma_0\left(\EXP{f(x_0)}+\lambda_0 \EXP{h(x_0)} \right)+2L_\omega M^2, \nonumber
	\end{align}
	where we substituted $\EXP{D(x_0,z)}$ by $2L_\omega M^2$ as we showed in \eqref{error bound5}. Then, by multiplying both sides of \eqref{inequality for d1} by $\gamma_0^{r-1}$ and summing the resulting relation with \eqref{sum overk} and rearranging the terms, we have
	\begin{align*}
		&\sum_{k=0}^{N-1} \gamma_k^r\left(\EXP{f(x_k)+\lambda_k h(x_k)}\right)-\sum_{k=0}^{N-1}\gamma_k^r(f(z)+\lambda_k h(z)) \leq 2L_\omega M^2 \left(\gamma_{N-1}^{r-1}+\gamma_0^{r-1}\right)\\ &+ \frac{C_F^2}{\mu_\omega} \sum_{k=0}^{N-1} \gamma_k^{r+1} +\frac{C_H^2}{\mu_\omega} \sum_{k=0}^{N-1} \gamma_k^{r+1}\lambda_k^2.
	\end{align*}
	Dividing both sides by $\sum_{k=0}^{N-1} \gamma_k^r$ and considering the definition of $\eta_{k,N-1}$ given by \eqref{weighted alg}, we obtain
	\begin{align*}
		&\sum_{k=0}^{N-1} \eta_{k,N-1}\left(\EXP{f(x_k)+\lambda_k h(x_k)}\right) -\sum_{k=0}^{N-1}\eta_{k,N-1}(f(z)+\lambda_k h(z))\\&\leq \left(\sum_{k=0}^{N-1} \gamma_k^r\right)^{-1}\left(2L_\omega M^2 \left(\gamma_{N-1}^{r-1}+\gamma_0^{r-1}\right)+  \frac{C_F^2}{\mu_\omega} \sum_{k=0}^{N-1}  \gamma_k^{r+1} +\frac{C_H^2}{\mu_\omega} \sum_{k=0}^{N-1} \gamma_k^{r+1} \lambda_k^2\right).
	\end{align*}
	So we can write
	\begin{align*}
		&\EXP{\sum_{k=0}^{N-1} \eta_{k,N-1}(f(x_k)+\lambda_k h(x_k))} -\sum_{k=0}^{N-1}\eta_{k,N-1}(f(z)+\lambda_k h(z))\\&\leq \left(\sum_{k=0}^{N-1} \gamma_k^r\right)^{-1}\left(2L_\omega M^2 \left(\gamma_{N-1}^{r-1}+\gamma_0^{r-1}\right)+  \frac{C_F^2}{\mu_\omega} \sum_{k=0}^{N-1}  \gamma_k^{r+1} +\frac{C_H^2}{\mu_\omega} \sum_{k=0}^{N-1} \gamma_k^{r+1} \lambda_k^2\right).
	\end{align*}
	Rearranging the terms, we have
	\begin{align*}
		&\EXP{\sum_{k=0}^{N-1} \eta_{k,N-1}f(x_k)} -\sum_{k=0}^{N-1}\eta_{k,N-1}f(z) \leq \sum_{k=0}^{N-1}\eta_{k,N-1}\lambda_k h(z)-\EXP{\sum_{k=0}^{N-1} \eta_{k,N-1}\lambda_k h(x_k)}\\&+ \left(\sum_{k=0}^{N-1} \gamma_k^r\right)^{-1}\left(2L_\omega M^2 \left(\gamma_{N-1}^{r-1}+\gamma_0^{r-1}\right)+  \frac{C_F^2}{\mu_\omega} \sum_{k=0}^{N-1}  \gamma_k^{r+1} +\frac{C_H^2}{\mu_\omega} \sum_{k=0}^{N-1} \gamma_k^{r+1} \lambda_k^2\right).
	\end{align*}
	In left-hand side, note that we have $\sum_{k=0}^{N-1} \eta_{k,N-1}=1$ from definition of $\eta_{k,N-1}$ in \eqref{weighted alg}. So convexity of $f$ implies that $f(\bar x_N) \leq \sum_{k=0}^{N-1} \eta_{k,N-1}f(x_k)$. We have
	\begin{align} \label{error bound6}
		&\EXP{f(\bar x_N)}-f(z)  \leq \underbrace{\sum_{k=0}^{N-1}\eta_{k,N-1}\lambda_k h(z)-\EXP{\sum_{k=0}^{N-1} \eta_{k,N-1}\lambda_k h(x_k)}}_\text{\hbox{Term5}}\\&+ \left(\sum_{k=0}^{N-1} \gamma_k^r\right)^{-1}\left(2L_\omega M^2 \left(\gamma_{N-1}^{r-1}+\gamma_0^{r-1}\right)+  \frac{C_F^2}{\mu_\omega} \sum_{k=0}^{N-1}  \gamma_k^{r+1} +\frac{C_H^2}{\mu_\omega} \sum_{k=0}^{N-1} \gamma_k^{r+1} \lambda_k^2\right).\nonumber
	\end{align}
	For Term5 we can write
	\begin{align*}
		\hbox{Term5}&=\EXP{\sum_{k=0}^{N-1}\eta_{k,N-1}\lambda_k h(z)-\sum_{k=0}^{N-1} \eta_{k,N-1}\lambda_k h(x_k)} \\& \leq \EXP{\sum_{k=0}^{N-1}\eta_{k,N-1}\lambda_k |h(z)-h(x_k)|} \leq 2M_h\sum_{k=0}^{N-1}\eta_{k,N-1}\lambda_k,
	\end{align*}
	where we used the definition of $M_h$. Using this inequality for Term5 and \eqref{error bound6} we obtain,
	\begin{align*}
		&\EXP{f(\bar x_N)}-f(z) \leq 2M_h\sum_{k=0}^{N-1}\eta_{k,N-1}\lambda_k \\&+ \left(\sum_{k=0}^{N-1} \gamma_k^r\right)^{-1}\left(2L_\omega M^2 \left(\gamma_{N-1}^{r-1}+\gamma_0^{r-1}\right)+  \frac{C_F^2}{\mu_\omega} \sum_{k=0}^{N-1}  \gamma_k^{r+1} +\frac{C_H^2}{\mu_\omega} \sum_{k=0}^{N-1} \gamma_k^{r+1} \lambda_k^2\right).
	\end{align*}
	Applying the formula of $ \eta_{k,N-1}$ in \eqref{weighted alg}, we obtain
	\begin{align*}
		\EXP{f(\bar x_N)}-f(z) \leq \left(\sum_{k=0}^{N-1} \gamma_k^r\right)^{-1}& \left(2L_\omega M^2 \left(\gamma_{N-1}^{r-1}+\gamma_0^{r-1}\right)+ 2M_h\sum_{k=0}^{N-1}\gamma_k^r\lambda_k \right. \cr\\& \left.+ \frac{C_F^2}{\mu_\omega} \sum_{k=0}^{N-1}  \gamma_k^{r+1} +\frac{C_H^2}{\mu_\omega} \sum_{k=0}^{N-1} \gamma_k^{r+1} \lambda_k^2\right).
	\end{align*}
	Since $\{\lambda_k\}$ is a non-increasing sequence, we obtain
	\begin{align*}
		\EXP{f(\bar x_N)}-f(z)\leq \left(\sum_{k=0}^{N-1} \gamma_k^r\right)^{-1} & \left(2L_\omega M^2 \left(\gamma_{N-1}^{r-1}+\gamma_0^{r-1}\right)+ 2M_h\sum_{k=0}^{N-1}\gamma_k^r\lambda_k+ \right. \cr\\& \left. \frac{C_F^2+C_H^2 \lambda_0^2}{\mu_\omega} \sum_{k=0}^{N-1}  \gamma_k^{r+1}\right).
	\end{align*}
\end{proof}
To derive the convergence rate statement in Theorem \ref{thm rate}, we make use of the following result (see Lemma 9, page 418 in \cite{Farzad3}).

\begin{lemma}\label{lemma:ineqHarmonic}
	For any scalar $\alpha\neq -1$ and integers $\ell$ and $N$ where $0\leq \ell \leq N-1$, we have
	\begin{align*}
		\frac{N^{\alpha+1}-(\ell+1)^{\alpha+1}}{\alpha+1}\leq \sum_{k=\ell}^{N-1}(k+1)^\alpha \leq (\ell+1)^\alpha+\frac{(N+1)^{\alpha+1}-(\ell+1)^{\alpha+1}}{\alpha+1}.
	\end{align*}
\end{lemma}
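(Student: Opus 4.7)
The plan is to reduce this to a standard integral-comparison estimate for the monotone function $x \mapsto x^\alpha$ on $[1,\infty)$. First I would reindex by $j = k+1$ so that $S \triangleq \sum_{k=\ell}^{N-1}(k+1)^\alpha = \sum_{j=\ell+1}^N j^\alpha$, and introduce the antiderivative $\Phi(x) \triangleq x^{\alpha+1}/(\alpha+1)$, which is well defined because $\alpha \neq -1$. A single observation will do much of the work: $\Phi'(x) = x^\alpha > 0$ on $(0,\infty)$, so $\Phi$ is strictly increasing regardless of the sign of $\alpha+1$. This point absorbs what would otherwise look like a sign-flip issue when $\alpha < -1$.

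I would then split on the monotonicity of the integrand. If $\alpha \geq 0$, then $x^\alpha$ is nondecreasing, so $\int_{j-1}^j x^\alpha\,dx \leq j^\alpha \leq \int_j^{j+1} x^\alpha\,dx$; telescoping for $j = \ell+1,\dots,N$ yields $\Phi(N)-\Phi(\ell) \leq S \leq \Phi(N+1)-\Phi(\ell+1)$. Since $\Phi(\ell) \leq \Phi(\ell+1)$ by monotonicity, the claimed lower bound follows, and the upper bound is immediate (the leading $(\ell+1)^\alpha$ term on the right-hand side of the statement is slack in this regime).

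If $\alpha < 0$ with $\alpha \neq -1$, the sandwich reverses to $\int_j^{j+1} x^\alpha\,dx \leq j^\alpha \leq \int_{j-1}^j x^\alpha\,dx$. Telescoping now gives $\Phi(N+1)-\Phi(\ell+1) \leq S \leq \Phi(N)-\Phi(\ell)$, and the lower bound follows from $\Phi(N) \leq \Phi(N+1)$, which holds in both subcases $-1 < \alpha < 0$ and $\alpha < -1$ by monotonicity of $\Phi$. For the upper bound I would not use the telescoped estimate directly; instead I would peel off the first term and apply the integral bound only to the tail, yielding $S = (\ell+1)^\alpha + \sum_{j=\ell+2}^N j^\alpha \leq (\ell+1)^\alpha + \int_{\ell+1}^N x^\alpha\,dx = (\ell+1)^\alpha + \Phi(N)-\Phi(\ell+1)$, and one further use of $\Phi(N) \leq \Phi(N+1)$ recovers the stated form. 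The main bookkeeping obstacle lies precisely here in the subcase $\alpha<-1$, where $\alpha+1$ is negative and a naive approach would seem to require flipping inequalities a second time; routing everything through the monotonicity of $\Phi$ itself bypasses this cleanly, so no further case splits are needed.
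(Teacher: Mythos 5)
Your proof is correct. The paper itself does not prove this lemma --- it imports it by citation as Lemma 9 of \cite{Farzad3} --- so there is no in-paper argument to compare against; your integral-comparison route via the antiderivative $\Phi(x)=x^{\alpha+1}/(\alpha+1)$ is the standard one and is complete. Two details are handled well and worth keeping: routing all sign questions through the monotonicity of $\Phi$ (which holds for every $\alpha\neq-1$ since $\Phi'(x)=x^\alpha>0$) rather than tracking the sign of $\alpha+1$, and peeling off the $(\ell+1)^\alpha$ term before integrating in the decreasing case, which is exactly what avoids the divergent integral $\int_0^1 x^\alpha\,dx$ when $\ell=0$ and $\alpha\le-1$ and explains why the extra $(\ell+1)^\alpha$ appears in the stated upper bound.
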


In the following result, we show that using Algorithm \ref{algorithm:SMD}, and under specific choices for the stepsize and regularization sequences, the objective function of problem \eqref{def:firstlevel} converges to its optimal value in a near optimal rate of $\mathcal{O}\left(1/N^{0.5-\delta}\right)$, where $\delta>0$ is an arbitrary small number. We also establish almost sure convergence of the generated sequence by Algorithm \ref{algorithm:SMD} to the unique optimal solution of problem \eqref{def:SL}.

\begin{theorem} [\bf{Convergence and a rate statement for Algorithm \ref{algorithm:SMD}}] \label{thm rate}
	Consider problem \eqref{def:SL}. Let Assumption \ref{assum:properties} and \ref{assum:RVs} hold. Let $\{\bar x_N\}$ be generated by Algorithm \ref{algorithm:SMD}. Let $0<\delta<0.5$ be an arbitrary scalar and $r<1$ be an arbitrary constant. Assume for $0<\delta<0.5$, $\{\gamma_k\}$ and $\{\lambda_k\}$ are sequences such that
	\begin{align*}
		\boxed{
			\gamma_k=\frac{\gamma_0}{(k+1)^{0.5+0.5\delta}} \hbox{ and } \lambda_k=\frac{\lambda_0}{(k+1)^{0.5-\delta}},}
	\end{align*}
	where $\gamma_0$ and $\lambda_0$ are positive scalars and $\gamma_0\lambda_0 \leq \frac{L_\omega}{\mu_h}$. Then,
	\begin{itemize}
		\item[(i)] The sequence $\{\bar x_N\}$ converges to $x_h^*$ almost surely.
		\item[(ii)] We have $\lim_{N \to \infty}\EXP{\|\bar x_{N+1}-x_h^*\|}=0$.
		\item[(iii)] $\EXP{f(\bar x_N)}$ converges to $f^*$ with the rate of ${\cal O}\left(1/N^{0.5-\delta}\right)$, where $f^*$ is the optimal objective value of problem \eqref{def:firstlevel}.
	\end{itemize}
\end{theorem}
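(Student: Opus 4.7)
For parts (i) and (ii), the plan is to invoke Theorem \ref{thm conv for xbar} with $a \triangleq 0.5+0.5\delta$ and $b \triangleq 0.5-\delta$. One verifies each of its five hypotheses directly: $a,b>0$ holds since $0<\delta<0.5$; $a>b$ reduces to $1.5\delta>0$; $a>0.5$ is immediate; $a+b=1-0.5\delta<1$; and $ar\le 1$ follows from $r<1$ together with $a<1$. Consequently, (i) and (ii) are immediate corollaries of Theorem \ref{thm conv for xbar}.

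For part (iii), I would apply Lemma \ref{lemma averaging} with $z=x_h^*\in X$, using that $x_h^*$ lies in the inner-level solution set $X^*$ and hence $f(x_h^*)=f^*$. This reduces the problem to estimating the four quantities $\sum_{k=0}^{N-1}\gamma_k^r$, $\gamma_{N-1}^{r-1}$, $\sum_{k=0}^{N-1}\gamma_k^r\lambda_k$ and $\sum_{k=0}^{N-1}\gamma_k^{r+1}$. With the substitutions $\gamma_k^r=\gamma_0^r(k+1)^{-ar}$, $\gamma_k^r\lambda_k=\gamma_0^r\lambda_0(k+1)^{-(ar+b)}$ and $\gamma_k^{r+1}=\gamma_0^{r+1}(k+1)^{-a(r+1)}$, each sum is of harmonic type and Lemma \ref{lemma:ineqHarmonic} with $\ell=0$ applies. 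Because $ar<1$ and $ar+b<a+b<1$, the normalizing denominator satisfies $\sum_{k=0}^{N-1}\gamma_k^r=\Theta(N^{1-ar})$ and the bias sum satisfies $\sum_{k=0}^{N-1}\gamma_k^r\lambda_k={\cal O}(N^{1-ar-b})$.

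A short exponent calculation then shows that the three contributions in the bound of Lemma \ref{lemma averaging}, after dividing by $\sum_{k=0}^{N-1}\gamma_k^r$, decay at rates
\begin{align*}
&\frac{\gamma_{N-1}^{r-1}+\gamma_0^{r-1}}{\sum_{k=0}^{N-1}\gamma_k^r}=\Theta\bigl(N^{a(1-r)-(1-ar)}\bigr)=\Theta(N^{a-1})=\Theta\bigl(N^{-(0.5-0.5\delta)}\bigr),\\
&\frac{\sum_{k=0}^{N-1}\gamma_k^r\lambda_k}{\sum_{k=0}^{N-1}\gamma_k^r}={\cal O}(N^{-b})={\cal O}\bigl(N^{-(0.5-\delta)}\bigr),\\
&\frac{\sum_{k=0}^{N-1}\gamma_k^{r+1}}{\sum_{k=0}^{N-1}\gamma_k^r}={\cal O}(N^{-a})={\cal O}\bigl(N^{-(0.5+0.5\delta)}\bigr),
\end{align*}
where in the last line the bounded-sum case $a(r+1)\ge 1$ only makes the contribution decay faster and hence does not alter the conclusion. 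Since $0<\delta<0.5$ forces the ordering $0.5-\delta<0.5-0.5\delta<0.5+0.5\delta$ among the three exponents, the slowest, and therefore binding, contribution is the middle one, ${\cal O}\bigl(N^{-(0.5-\delta)}\bigr)$, which is driven by the regularization-bias term $\sum_{k=0}^{N-1}\gamma_k^r\lambda_k$ and yields exactly the claimed rate.

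I do not foresee any substantial technical obstacle; the whole argument is a substitution into Lemma \ref{lemma averaging} followed by careful bookkeeping of exponents via Lemma \ref{lemma:ineqHarmonic}. The only items requiring mild care are (a) verifying $ar<1$ so the averaging denominator grows polynomially rather than stalling, which is automatic from $a<1$ and $r<1$, and (b) identifying the bias sum as the bottleneck among the three terms, which follows from the elementary ordering above. The rate cannot be improved beyond ${\cal O}(1/N^{0.5-\delta})$ within this parametrization precisely because the choice $b=0.5-\delta$ was made to balance the first and second contributions against the constraint $a+b<1$ required by Theorem \ref{thm conv for xbar}.
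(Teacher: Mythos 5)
Your proposal is correct and follows essentially the same route as the paper: parts (i) and (ii) by checking the exponent conditions of Theorem \ref{thm conv for xbar} for $a=0.5+0.5\delta$, $b=0.5-\delta$, and part (iii) by substituting into Lemma \ref{lemma averaging} (with a $z$ in $X^*$ so that $f(z)=f^*$) and estimating the harmonic-type sums via Lemma \ref{lemma:ineqHarmonic}, identifying the bias sum $\sum_k\gamma_k^r\lambda_k$ as the bottleneck of order ${\cal O}(N^{-(0.5-\delta)})$. One small caveat: your parenthetical claim that the case $a(r+1)\ge 1$ ``only makes the contribution decay faster'' is backwards --- in that regime the ratio $\sum_k\gamma_k^{r+1}\big/\sum_k\gamma_k^r$ behaves like $N^{-(1-ar)}$ (up to a logarithm), and there $1-ar\le a$, so it decays no faster than $N^{-a}$; the conclusion nevertheless survives because $ar<a=0.5+0.5\delta$ gives $1-ar>0.5-0.5\delta>b$, and the paper avoids the case split altogether by bounding $(k+1)^{-a(r+1)}\le(k+1)^{-(ar+b)}$ (valid since $a>b$) and absorbing that sum into the bias sum, which always lies in the divergent regime $ar+b<1$.
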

\begin{proof} Throughout, we use the notation $a=0.5+0.5\delta$, $b=0.5-\delta$.
	
	\noindent (i,ii) From the values of $a$ and $b$, and that $r<1$ and $0<\delta<0.5$, we have
	\begin{align*}
		a>b>0,\ a>0.5, \ a+b=1-0.5\delta<1,\ ar= 0.5(1+\delta)r <0.5(1.5)=0.75<1.
	\end{align*}
	This implies that all conditions of Theorem \ref{thm conv for xbar} are satisfied. Therefore, $\{\bar x_N\}$ converges to $x_h^*$ almost surely and $\lim_{N \to \infty}\EXP{\|\bar x_{N+1}-x_h^*\|}=0$.
	
	\noindent (iii) Substituting $\gamma_k$ and $\lambda_k$ in the inequality given by Lemma \ref{lemma averaging} and selecting $z=x^*$, we obtain
	\begin{align*}
		\EXP{f(\bar x_N)}-f^*\leq & \left(\sum_{k=0}^{N-1} \frac{\gamma_0^r}{(k+1)^{ar}}\right)^{-1}\left(2L_\omega M^2\gamma_0^{r-1} \left(N^{a(1-r)}+1\right) \right. \cr\\& \left. + 2M_h\sum_{k=0}^{N-1}\frac{\gamma_0^r\lambda_0}{(k+1)^{ar+b}}+  \left( \frac{C_F^2+C_H^2 \lambda_0^2}{\mu_\omega} \right) \sum_{k=0}^{N-1} \frac{\gamma_0^{r+1}}{(k+1)^{a(r+1)}}\right),
	\end{align*}
	where $M_h$ is the upper bound for function $h$ over the set $X$. Note that since $a>b>0$, we have $a(r+1)>ar+b$. Thus, $(k+1)^{a(r+1)}> (k+1)^{ar+b}$. Taking this into account, from the preceding relation we have
	\begin{align}\label{boundonf}
		\EXP{f(\bar x_N)}-f^*\leq & \left(\sum_{k=0}^{N-1} \frac{\gamma_0^r}{(k+1)^{ar}}\right)^{-1}  \left(2L_\omega M^2\gamma_0^{r-1} \left(N^{a(1-r)}+1\right) \right. \cr  \\ & \left. + \gamma_0^r \left(2M_h\lambda_0+ \left( \frac{C_F^2\gamma_0+C_H^2 \lambda_0^2\gamma_0}{\mu_\omega} \right)\right) \sum_{k=0}^{N-1}\frac{1}{(k+1)^{ar+b}}\right). \nonumber
	\end{align}
	Let us consider the following definitions.
	\begin{align*}
		&\hbox{Term1}= \left(\sum_{k=0}^{N-1} \frac{1}{(k+1)^{ar}}\right)^{-1},\hbox{ Term2}= \left(\sum_{k=0}^{N-1} \frac{1}{(k+1)^{ar}}\right)^{-1} N^{a(1-r)},\\
		&\hbox{Term3}=\left(\sum_{k=0}^{N-1} \frac{1}{(k+1)^{ar}}\right)^{-1}\left(\sum_{k=0}^{N-1}\frac{1}{(k+1)^{ar+b}}\right).
	\end{align*}
	Using these definitions, equivalently from \eqref{boundonf}, we have
	\begin{align} \label{terms}
		\EXP{f(\bar x_N)}-f^* & \leq 2L_\omega M^2\gamma_0^{-1} (\hbox{Term1}+\hbox{Term2}) \\&+ \left(2M_h\lambda_0 + \left( \frac{C_F^2\gamma_0+C_H^2 \lambda_0^2\gamma_0}{\mu_\omega} \right)\right) \hbox{Term3}.
	\end{align}
	Next, we estimate the terms 1, 2 and 3. Note that for given $0<\delta<0.5$, from the definitions of $a$ and $b$ and that $r<1$, we have $ar<1$ and $ar+b<a+b<1$. By applying Lemma \ref{lemma:ineqHarmonic}, we have
	\begin{align*}
		&\hbox{Term1} \leq\frac{1-ar}{N^{1-ar}-1}={\cal O}\left(N^{-(1-ar)}\right), \\
		&\hbox{Term2} \leq\frac{N^{a(1-r)}}{\frac{N^{1-ar}-1}{1-ar}}=\frac{(1-ar)N^{a(1-r)}}{N^{1-ar}-1}={\cal O}\left(N^{-(1-a)}\right), \\
		&\hbox{Term3} \leq\frac{\frac{(N+1)^{1-ar-b}-1}{1-ar-b}+1}{\frac{N^{1-ar}-1}{1-ar}} =\frac{(1-ar)\left((N+1)^{1-ar-b}-1\right)}{(1-ar-b)\left(N^{1-ar}-1\right)} + \frac{1-ar}{N^{1-ar}-1} \\& ={\cal O}\left(N^{-b}\right) + {\cal O}\left(N^{-(1-ar)}\right).
	\end{align*}
	From the preceding bounds and relation \eqref{terms}, we have
	\begin{align*}
		\EXP{f(\bar x_N)}-f^*\leq {\cal O}\left(N^{-\min \{1-ar,1-a,b\}}\right)= {\cal O}\left(N^{-\min \{1-a,b\}}\right),
	\end{align*}
	where we used $1-a\leq 1-ar$. Replacing $a$ and $b$ by their values, we have
	\begin{align*}
		\EXP{f(\bar x_N)}-f^*\leq {\cal O}\left(N^{-\min \{0.5-0.5\delta,0.5-\delta\}}\right)={\cal O}\left(N^{-(0.5-\delta)}\right).
	\end{align*}
\end{proof}

\section{Experimental results}
\label{sec:num}
In this section, we examine the performance of Algorithm \ref{algorithm:SMD} on different sets of problems. First we consider linear inverse problems \cite{Beck09}. Given a matrix $A\in \mathbf{R}^{m\times n}$ and a vector $b\in \mathbf{R}^m$, the goal is to find $x \in \mathbf{R}^n$ such that $Ax+\delta=b$, where $\delta \in \mathbf{R}^m$ denotes an unknown noise. To solve this problem, one may consider the following least-squares problem given as 
\begin{equation}\label{def:firstlevelLS}
\begin{split}
&\begin{array}{ll}
\hbox{minimize } &   f(x)\triangleq \|Ax-b\|_2^2\cr
\hbox{subject to } &  x \in \mathbf{R}^n,
\end{array}
\end{split}
\end{equation}
In many applications arising from signal processing \cite{Friedlander07} and image reconstruction \cite{Garrigos17}, problem \eqref{def:firstlevelLS} is ill-posed. To address ill-posedness, we consider a \eqref{def:SL} model here, where in as objective function $h$, we minimize a desired regularizer. Here we assume this function is characterized by both $\ell_1$ and $\ell_2$ norms. More precisely, we consider the following model
\begin{align} \label{numeric upper}
	\mbox{minimize}\ \ &h(x)\triangleq  \frac{\mu_h }{2}\|x\|_2^2+ \|x\|_1\\
	\hbox{subject to}\ \ &x \in \argmin_{y \in \mathbf R^n } \|Ay-b\|_2^2, \nonumber
\end{align}
where $\mu_{h}>0$ is the strongly convex parameter. Note that in contrast with the work in \cite{ Beck14,Sabach17}, the function $h$ is nondifferentiable. To perform numerical experiments, similar to \cite{ Beck14,Sabach17}, we consider three inverse problems, namely ``Baart'', ``Philips'', and ``Foxgood'' where they differ in terms of the underlying method to generate $A$ and $b$. More information on these problems can be found on the website {http://www2.imm.dtu.dk/~pcha/Regutools/}. Table \ref{table:inverseProblem} summarizes the results of our experiments. Here for each class of the three inverse models, we vary the value of the initial point $x_0$ and the dimension $n$. We report the value of $|f(\bar x_N)-f^*|$, referred to as the feasibility gap, and the value of $|h(\bar x_N)-h^*|$, referred to as the optimality gap. For all different scenarios of these experiments, we let the algorithm stop after 250 seconds. We let $\mu_{h}=0.5$ in our experiments  and let $\g_k$ and $\lambda_k$ be given by the update rules in Theorem \ref{thm rate}. To evaluate $f^*$ and $h^*$, we consider a representation of problem \eqref{def:firstlevel} as the linear system equation $Ax=b$. Using this reformulation, we were able to evaluate both $f^*$ and $h^*$ directly using the \textit{quadprog} package in Matlab. We observe that while the feasibility gap is  very small, the optimality gap has approached to zero for almost all the scenarios. We note that for Foxgood when $n=500$ and $1000$, even though the optimality gap is not negligible, the relative optimality gap is as small as $0.4$\% and $0.6$\%, respectively. 

\begin{table}[t]
	\caption{Performance of IR-SMD method for linear inverse problems}
	\centering
	\scalebox{0.9}{
		\label{determinstic}
		\begin{tabular}{c|c|ccc|ccc}
			\hline\noalign{\smallskip}
			\multirow{2}{*}{\begin{tabular}[c]{@{}c@{}}Initial Point\\ $x_0$\end{tabular}} & \multirow{2}{*}{$n$} & \multicolumn{3}{c|}{\begin{tabular}[c]{@{}c@{}}Feasibility Gap\\ $|f(\bar x_N)-f^*|$\end{tabular}} & \multicolumn{3}{c}{\begin{tabular}[c]{@{}c@{}}Optimality Gap\\ $|h(\bar x_N)-h^*|$\end{tabular}} \\ \cline{3-8} 
			&                      & Baart                         & Foxgood                       & Phillips                      & Baart                       & Foxgood                       & Phillips                      \\ \hline\noalign{\smallskip}
			\multirow{5}{*}{$x_0=-10\times\mathbf{1}_n$ }                                               & 20                   & $3.12\mathrm{e}{-7}$          & $3.48\mathrm{e}{-6}$          & $7.87\mathrm{e}{-9}$          & 0.01                        & 0.07                          & 0.00                          \\
			& 100                  & $1.26\mathrm{e}{-6}$          & $1.28\mathrm{e}{-5}$          & $2.95\mathrm{e}{-8}$          & 0.02                        & 0.19                          & 0.00                          \\
			& 200                  & $4.04\mathrm{e}{-6}$          & $3.23\mathrm{e}{-5}$          & $7.96\mathrm{e}{-8}$          & 0.04                        & 0.41                          & 0.00                          \\
			& 500                  & $3\mathrm{e}{-5}$             & $1.48\mathrm{e}{-4}$          & $4.8\mathrm{e}{-7}$           & 0.18                        & 1.22                          & 0.00                          \\
			& 1000                 & $2.46\mathrm{e}{-4}$          & $8.49\mathrm{e}{-4}$          & $3.59\mathrm{e}{-6}$          & 0.74                        & 3.5                           & 0.01                          \\ \hline\noalign{\smallskip}
			\multirow{5}{*}{$x_0=\mathbf{0}_n$ }                                                  & 20                   & $3.15\mathrm{e}{-7}$          & $3.47\mathrm{e}{-6}$          & $7.84\mathrm{e}{-9}$          & 0.01                        & 0.07                          & 0.00                          \\
			& 100                  & $1.29\mathrm{e}{-6}$          & $1.26\mathrm{e}{-5}$          & $2.94\mathrm{e}{-8}$          & 0.02                        & 0.19                          & 0.00                          \\
			& 200                  & $4\mathrm{e}{-6}$             & $3.29\mathrm{e}{-5}$          & $7.99\mathrm{e}{-8}$          & 0.04                        & 0.41                          & 0.00                          \\
			& 500                  & $2.91\mathrm{e}{-5}$          & $1.45\mathrm{e}{-4}$          & $4.97\mathrm{e}{-7}$          & 0.18                        & 1.22                          & 0.00                          \\
			& 1000                 & $2.5\mathrm{e}{-4}$           & $8.3\mathrm{e}{-4}$           & $3.64\mathrm{e}{-6}$          & 0.75                        & 3.47                          & 0.01                          \\ \hline\noalign{\smallskip}
			\multirow{5}{*}{$x_0=10\times\mathbf{1}_n$ }                                                & 20                   & $3.15\mathrm{e}{-7}$          & $3.47\mathrm{e}{-6}$          & $7.96\mathrm{e}{-9}$          & 0.01                        & 0.07                          & 0.00                          \\
			& 100                  & $1.27\mathrm{e}{-6}$          & $1.26\mathrm{e}{-5}$          & $2.94\mathrm{e}{-8}$          & 0.02                        & 0.19                          & 0.00                          \\
			& 200                  & $3.94\mathrm{e}{-6}$          & $3.29\mathrm{e}{-5}$          & $8.16\mathrm{e}{-8}$          & 0.04                        & 0.41                          & 0.00                          \\
			& 500                  & $2.99\mathrm{e}{-5}$          & $1.51\mathrm{e}{-4}$          & $4.85\mathrm{e}{-7}$          & 0.18                        & 1.23                          & 0.00                          \\
			& 1000                 & $2.3\mathrm{e}{-4}$           & $8.35\mathrm{e}{-4}$          & $3.89\mathrm{e}{-6}$          & 0.72                        & 3.48                          & 0.01                          \\ \hline\noalign{\smallskip}
		\end{tabular}
		\label{table:inverseProblem}
	}
\end{table}
In the second part of this section, we present the performance of the IR-SMD method on a text classification problem. For this experiment, in \eqref{eqn:problem3} we let $\mathcal{L}$ to be a hinge loss function, i.e., $\mathcal{L}(\langle x,a\rangle, b) \triangleq \max \{0, 1-b \langle x,a\rangle\}$ and the pair $(a,b)$ be generated using the dataset Reuters Corpus Volume I (RCV1) (see \cite{Lewis04}). This dataset is a collection of articles produced by Reuters between 1996 and 1997 that are categorized into four groups including Corporate/Industrial, Economics, Government/Social and Markets. We consider binary classification of articles with respect to the Markets class. After the tokenization process, each article is represented by a sparse binary vector, where 1 denotes existence and 0 denotes nonexistence of a token in the article. We use a subset of the data with $150,000$ articles and $138,921$ tokens. To induce sparsity of the optimal solution, we consider the problem \eqref{def:SL}
\begin{align} \label{numeric upper2}
\mbox{minimize}\ \ &h(x)\triangleq  \frac{\mu_h }{2}\|x\|_2^2+ \|x\|_1\\
\hbox{subject to}\ \ &x \in \argmin_{y \in \mathbf R^n } \EXP{ \mathcal{L}(\langle y,a\rangle, b)}, \nonumber
\end{align}
where we let $\mu_h=0.1$. Figure \ref{fig:rcv} shows the simulation results. Here we vary the initial stepsize, initial regularization parameter, initial vector $x_0$, and the averaging parameter $r<1$. We let $\g_k$ and $\lambda_k$ be given by the rules in Theorem \ref{thm rate}, and report the logarithm of averaged of the loss function $\mathcal{L}$ using 15 sample paths of size $10,000$. The plots in Figure \ref{fig:rcv} support convergence of the IR-SMD method for the optimization problem \eqref{numeric upper2}.
\begin{table}[]
	\centering
	\scalebox{0.9}{
		\begin{tabular}{l |  c c c}
			$(\gamma_0,\lambda_0)$ & $x_0=-10\times\mathbf{1}_n$ &  $x_0=\mathbf{0}_n$  &  $x_0=10\times\mathbf{1}_n$  \\ \hline\\
			(10,1)
			&
			\begin{minipage}{.3\textwidth}
				\includegraphics[scale=.26, angle=0]{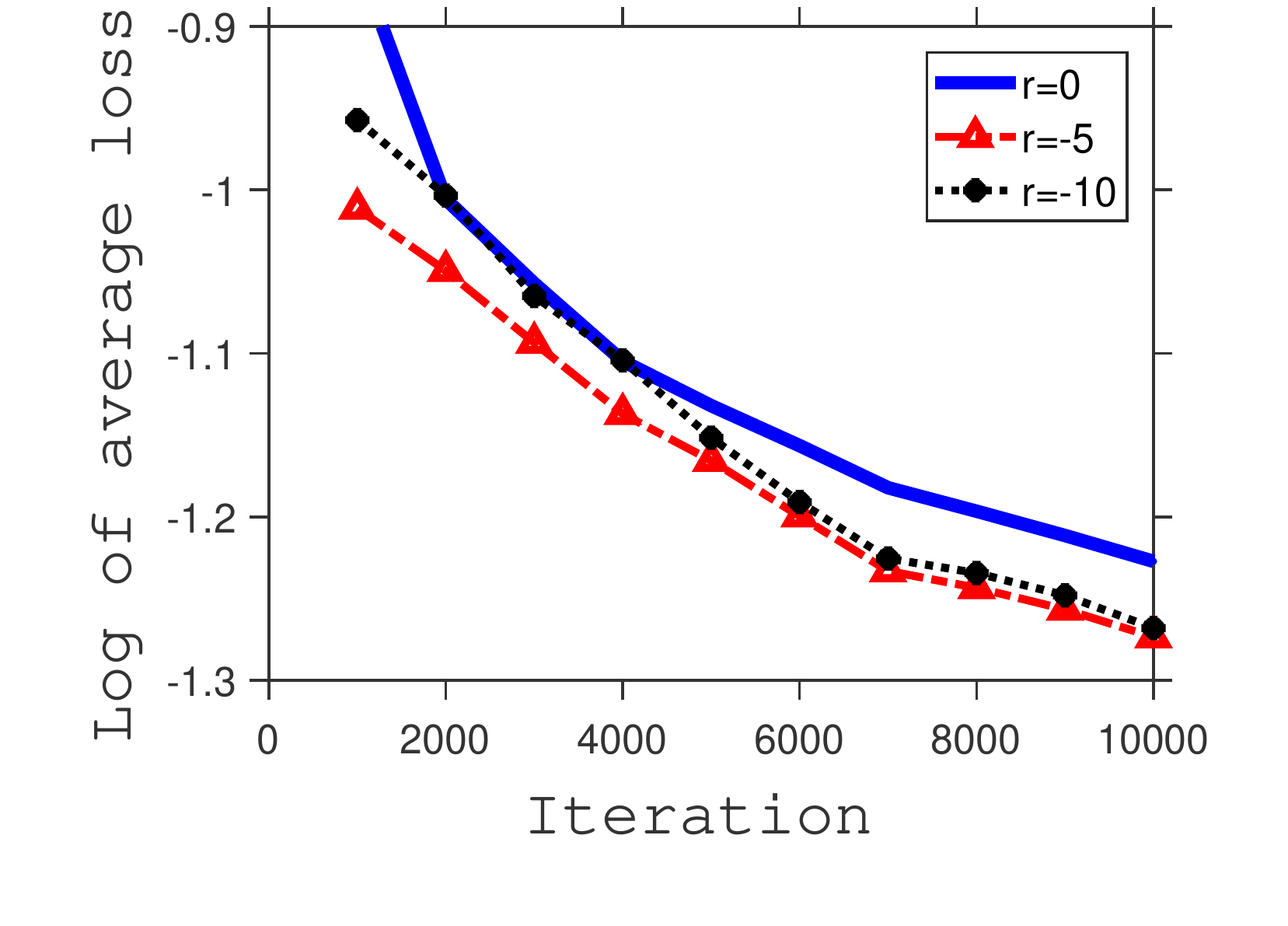}
			\end{minipage}
			&
			\begin{minipage}{.28\textwidth}
				\includegraphics[scale=0.26, angle=0]{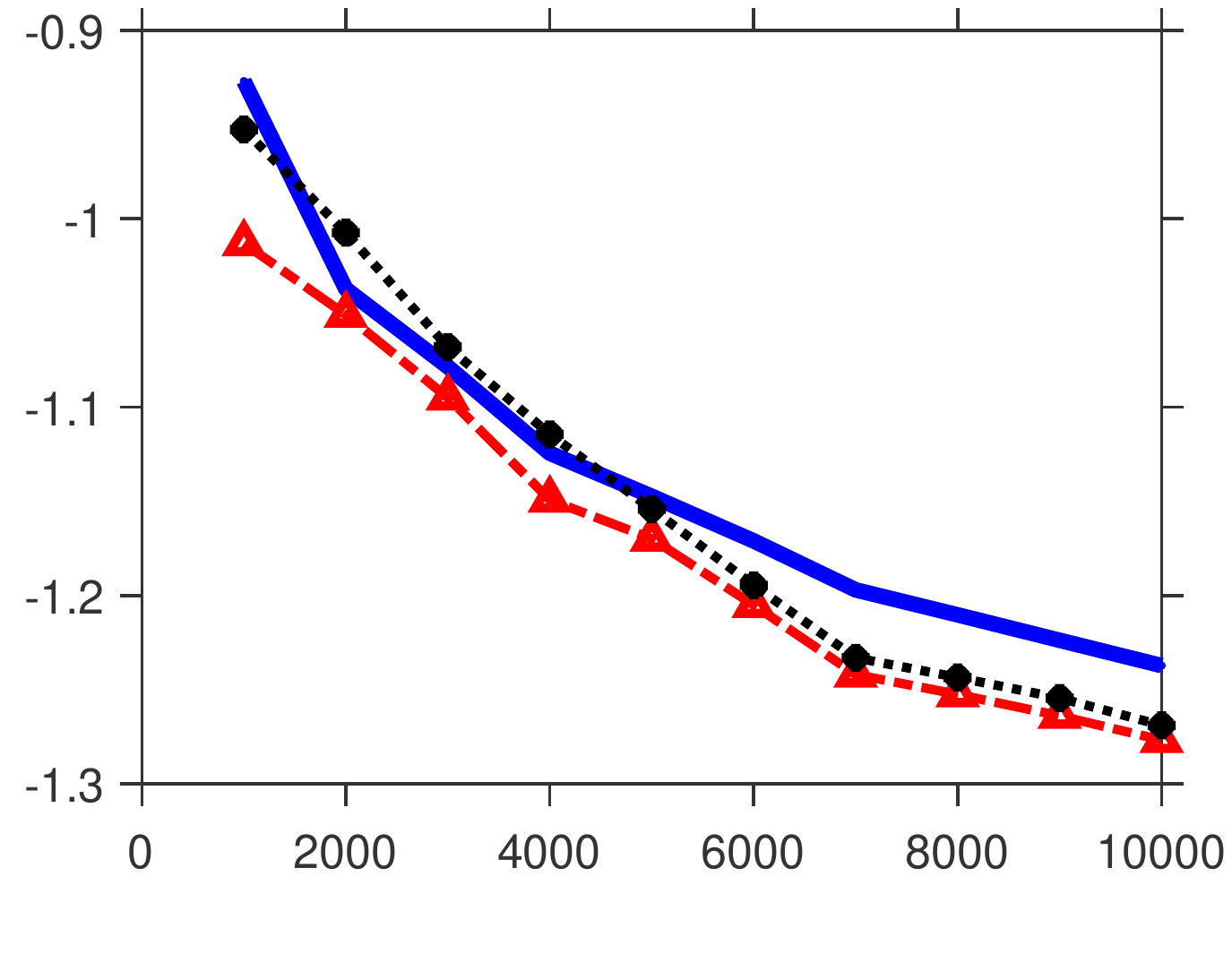}
			\end{minipage}
			&
			\begin{minipage}{.28\textwidth}
				\includegraphics[scale=.26, angle=0]{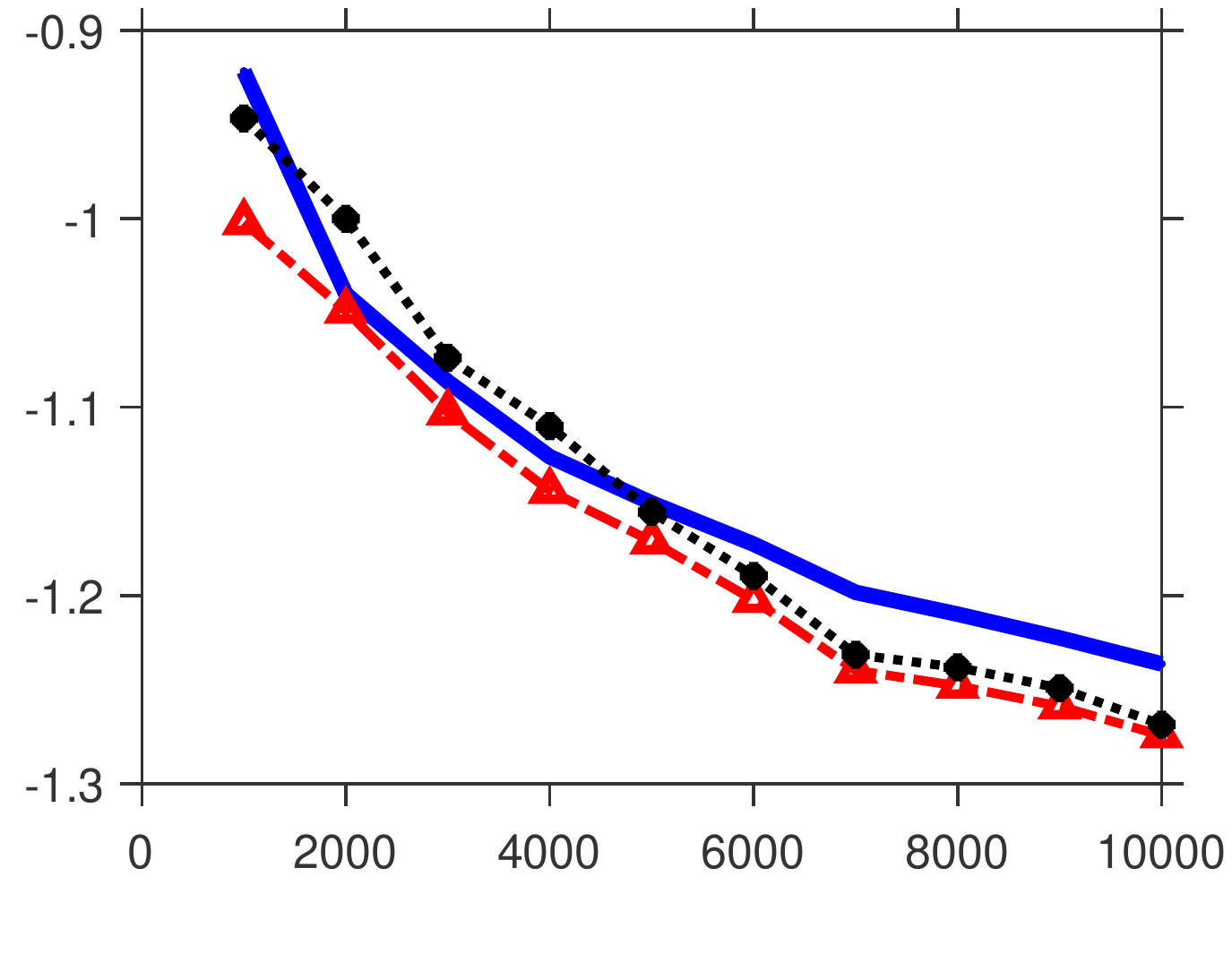}
			\end{minipage}
			\\
			(1,10)
			&
			\begin{minipage}{.3\textwidth}
				\includegraphics[scale=.26, angle=0]{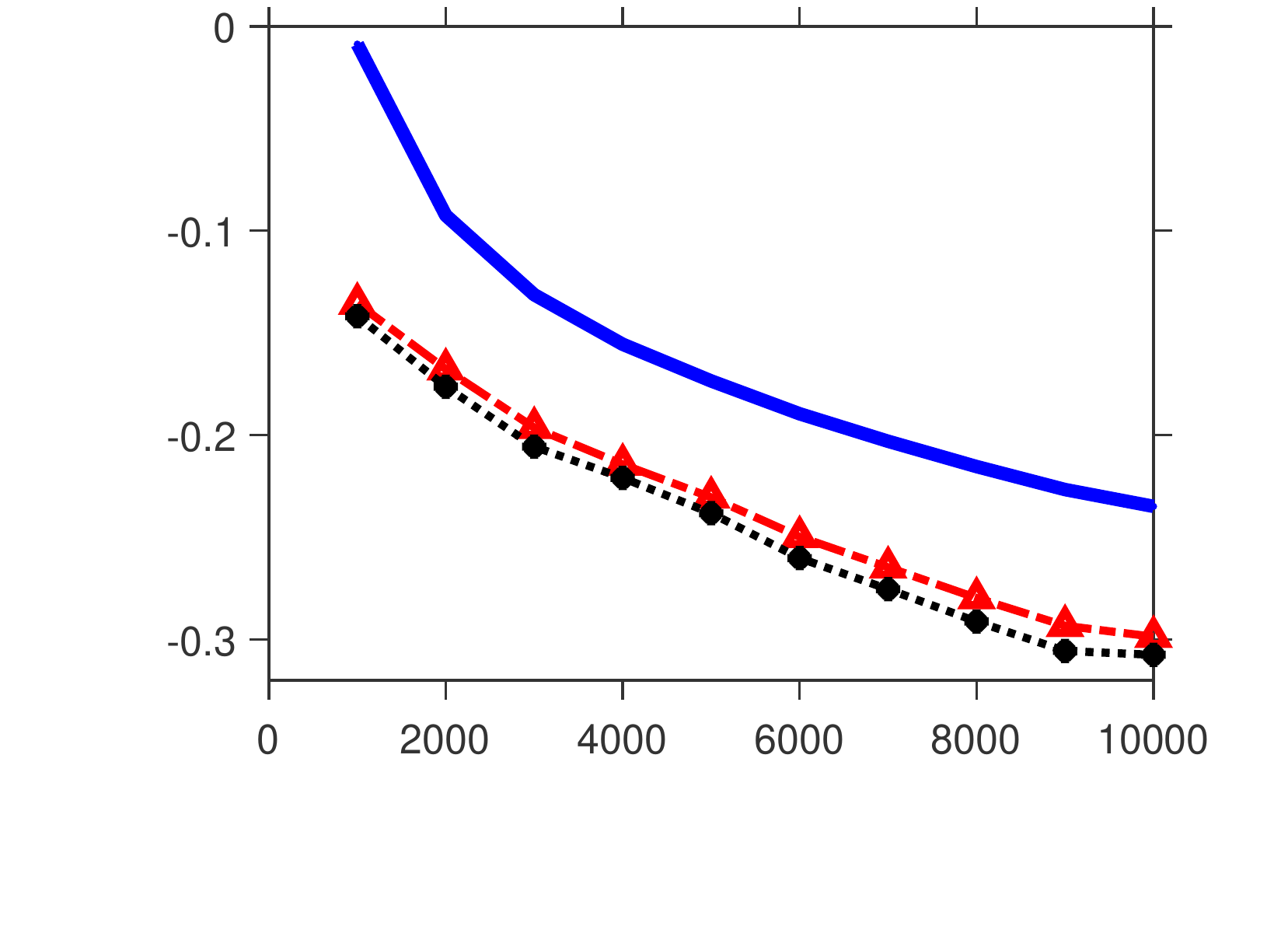}
			\end{minipage}
			&
			\begin{minipage}{.28\textwidth}
				\includegraphics[scale=.26, angle=0]{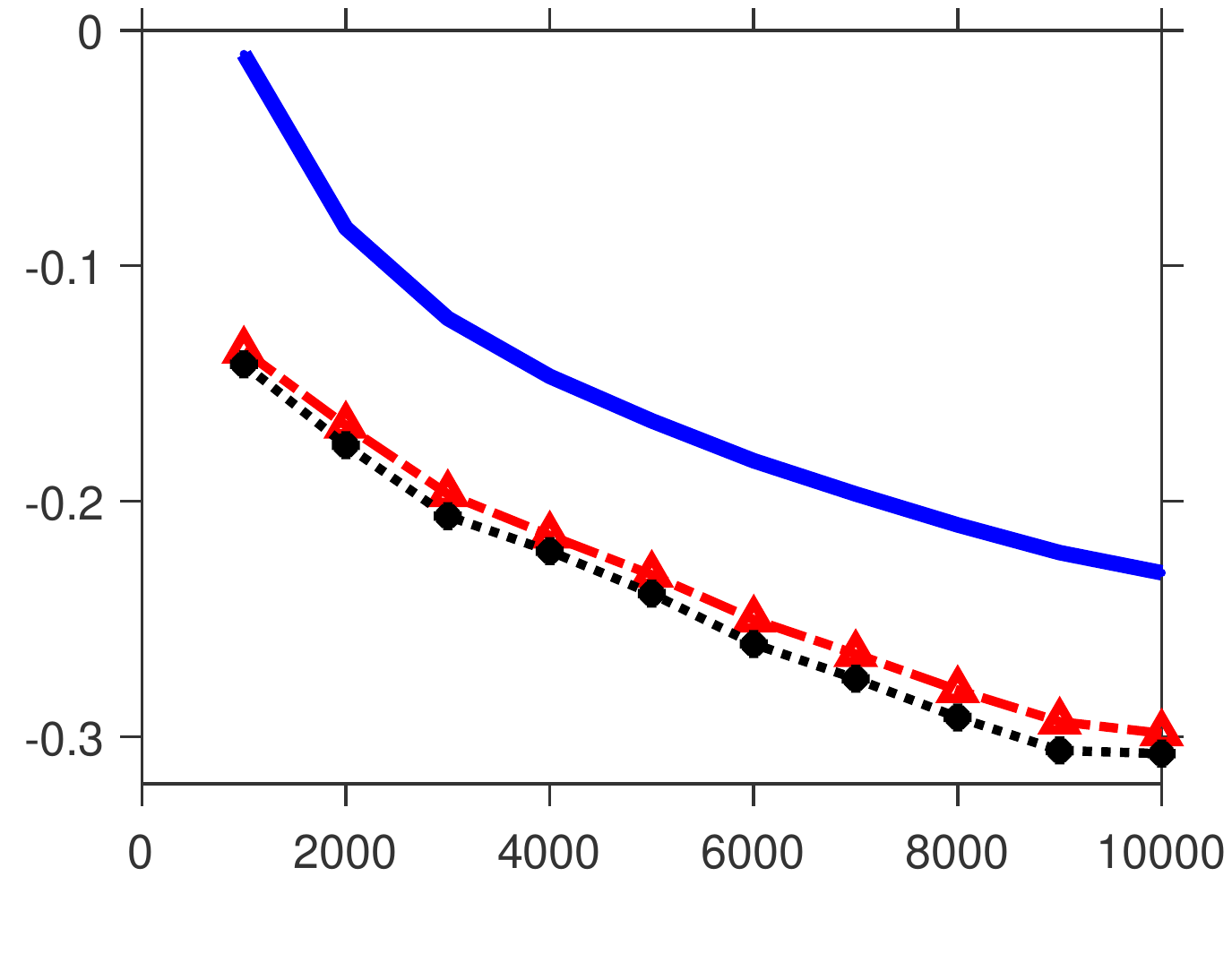}
			\end{minipage}
			&
			\begin{minipage}{.28\textwidth}
				\includegraphics[scale=.26, angle=0]{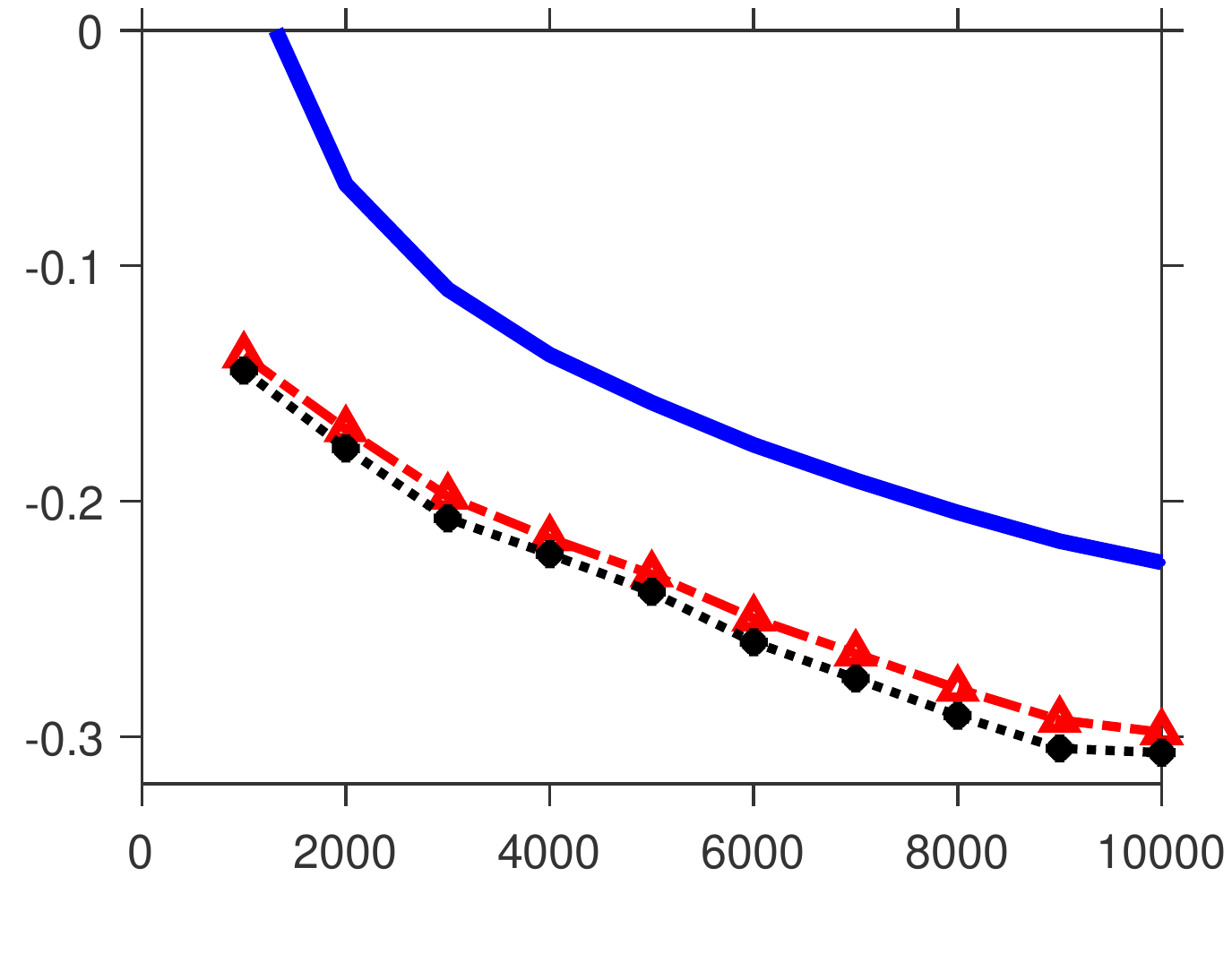}
			\end{minipage}
			\\
			(0.1,0.1) \  
			&
			\begin{minipage}{.3\textwidth}
				\includegraphics[scale=.26, angle=0]{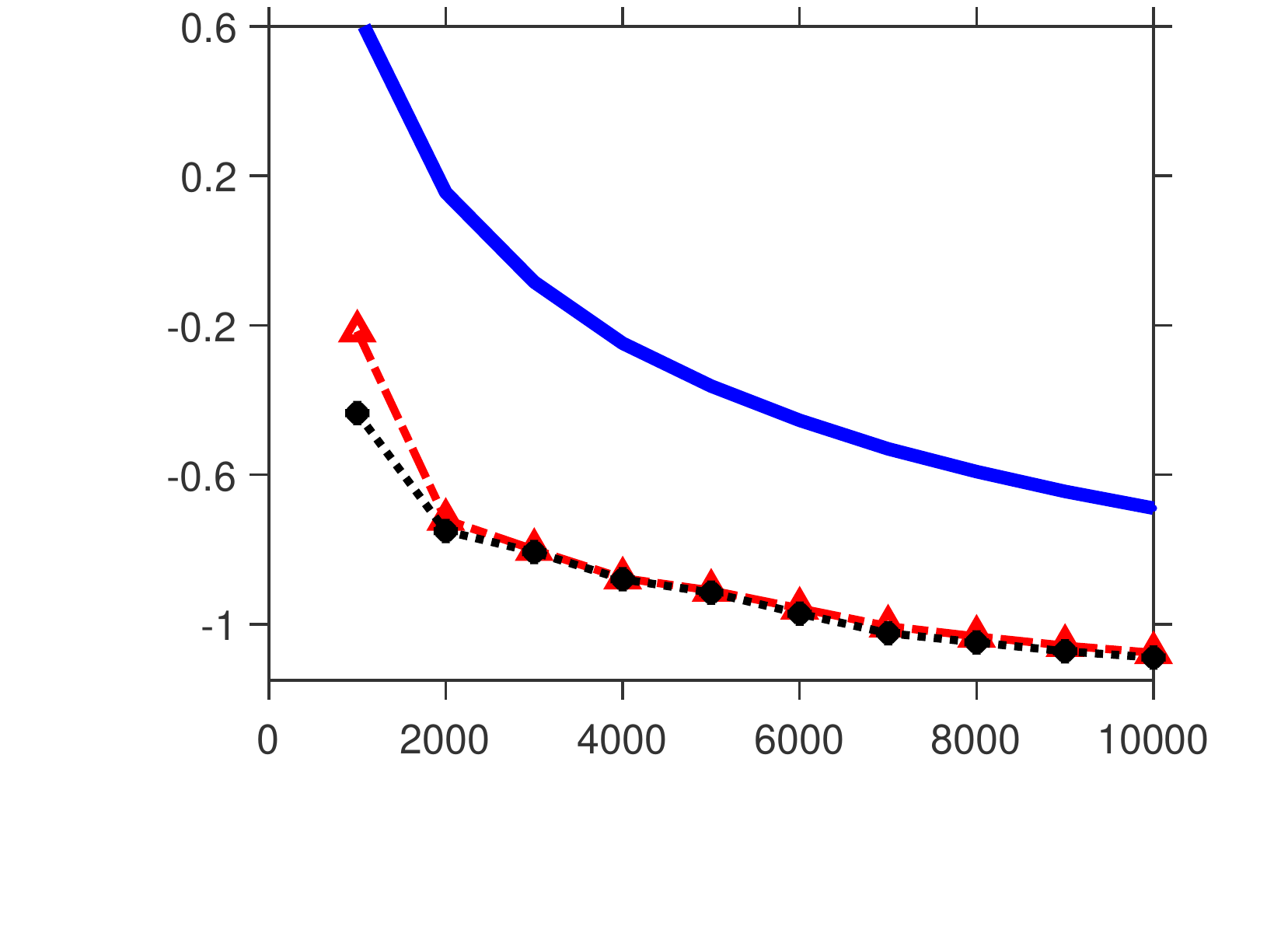}
			\end{minipage}
			&
			\begin{minipage}{.28\textwidth}
				\includegraphics[scale=.26, angle=0]{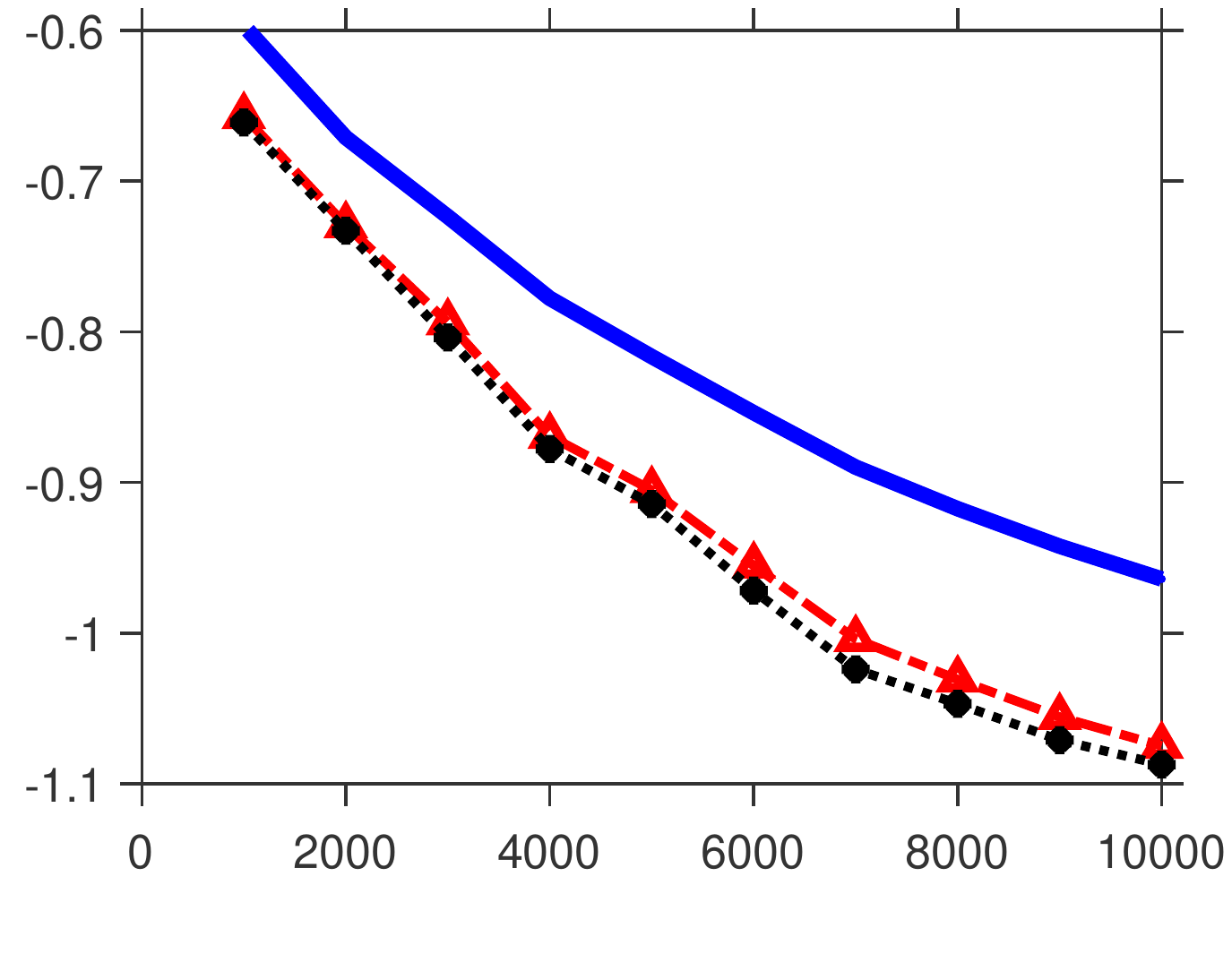}
			\end{minipage}
			&
			\begin{minipage}{.28\textwidth}
				\includegraphics[scale=.26, angle=0]{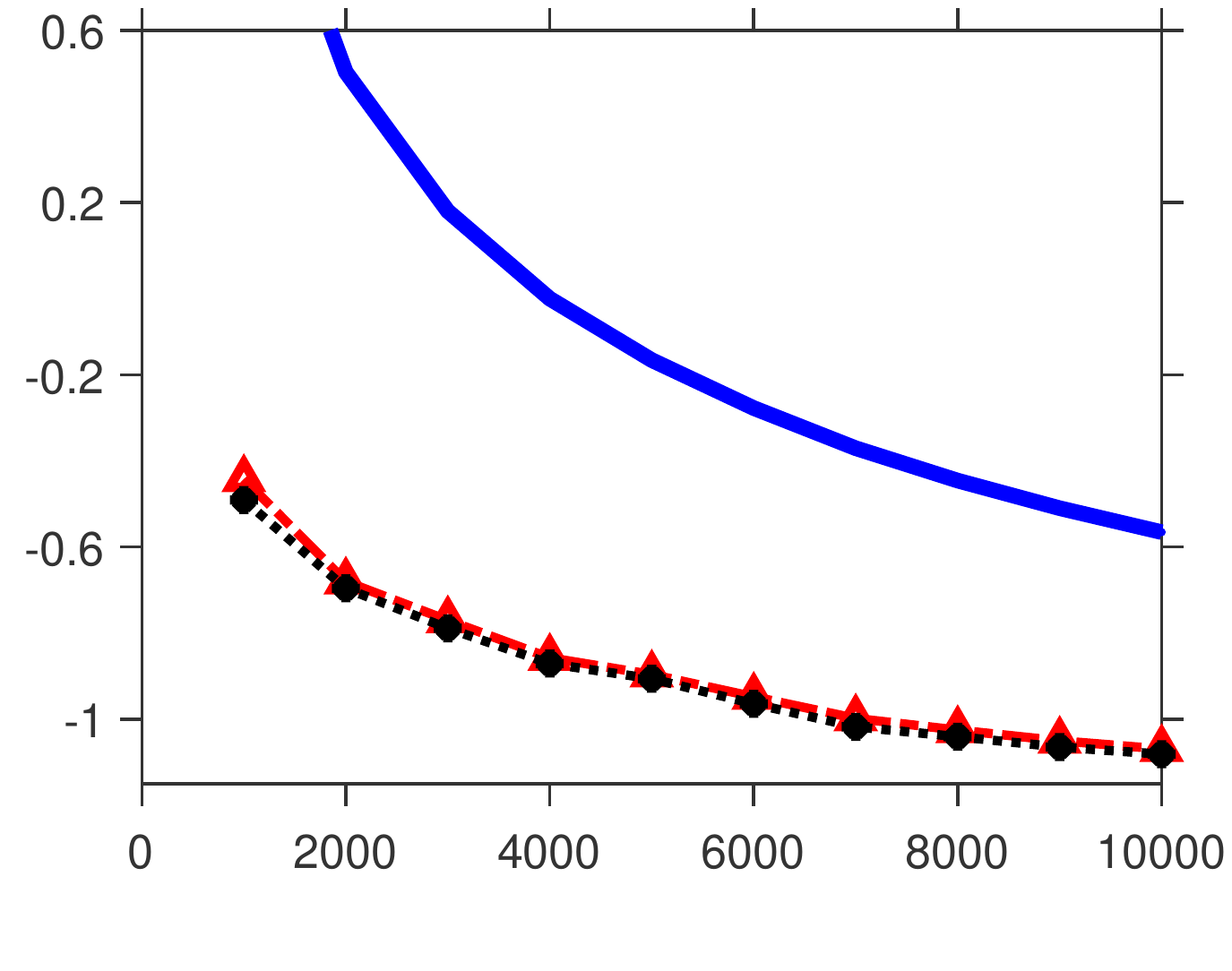}
			\end{minipage}
	\end{tabular}}
	\captionof{figure}{Performance of IR-SMD method for RCV1 dataset}
	\label{fig:rcv}
\end{table}
\section{Conclusions}\label{sec:rem}
Motivated by applications arising from machine learning and signal processing, we consider problem \eqref{def:SL}, where the optimal solution set of the problem \eqref{def:firstlevel} is the feasible set. We assume the objective function in \eqref{def:firstlevel} is convex and in the \eqref{def:SL} is strongly convex. We let both functions to be nondifferentiable and each one be given as an expected value of a stochastic function. We develop an iterative regularized stochastic mirror descent method (IR-SMD) where at each iteration, both stepsize and regularization parameters are updated. Our main contribution is two-fold: (i) We derive a family of update rules for the stepsize and the regularization parameter. We show that under this class of update rules, the sequence generated by the IR-SMD method converges to the unique optimal solution of problem \eqref{def:SL} in both an almost sure and a mean sense; (ii) To provide convergence rate statements, we show that under specific update rules for the stepsize and regularization sequences, the expected feasibility gap converges to zero with a near optimal rate of convergence. Moreover, under these update rules, the almost sure convergence and convergence in mean to the optimal solution of the bilevel problem are guaranteed. Our preliminary numerical results performed on three types of linear inverse problems and a binary text classification problem are promising.  

\bibliographystyle{amsplain}
\bibliography{reference1}
\appendix
\section{Proofs}\label{Appendix}
\subsection{\textbf{Proof of Lemma \ref{lemma 2-stage 2}}} \label{proof of lemma 2-stage 2}
\begin{proof}
	We prove this lemma in two steps.\\
	\noindent \textbf{Step1:} For any fixed $z\in Z$ we show that
	\begin{align} \label{minsum = summin}
	\min_{y\in Y(z)} \sum_{i=1}^{N} p_i q(y_i,\xi_i)= \sum_{i=1}^{N} p_i \min_{y_i \in Y_i(z)} q(y_i,\xi_i),
	\end{align}
	where the set $Y(z) \in \mathbf{R}^{m \times N}$ is defined as $Y(z)\triangleq \prod_{i=1}^N Y_i(z)$.
	For this, we denote the optimization problem on the left of the equation \eqref{minsum = summin} by $P_{lhs}$, and the minimization problem $\min_{y_i \in Y_i(z)} q(y_i,\xi_i)$ on the right hand side by $P_{rhs}^i$ for all $i=1,\cdots, N$. To prove the desired statement, we show that a vector ${y^*}^T\triangleq \left({y_1^*}^T,\cdots, {y_N^*}^T\right) \in \mathbf{R}^{m \times N}$ is an optimal solution to problem $P_{lhs}$ if and only if for all $i=1,\cdots, N$, $y_i^*$ is an optimal solution to problem $P_{rhs}^i$. First, let us assume $y^*$ solves $P_{lhs}$. Then, from the optimality condition we can write
	\begin{align} \label{optimality condition1}
	\left( p_1\nabla^Tq(y_1^*,\xi_1), \cdots , p_N\nabla^Tq(y_N^*,\xi_N) \right)^T(y-y^*) \geq 0, \qquad \hbox{for all } y\in Y(z),
	\end{align}
	where we use the notation $y^T=\left(y_1^T, \cdots,y_N^T\right)$. Then, we obtain
	\begin{align} \label{optimality condition2}
	\sum_{i=1}^{N} p_i \nabla^Tq(y_i^*,\xi_i) (y_i-y_i^*) \geq 0, \qquad \hbox{for all } y\in Y(z).
	\end{align}
	Consider any arbitrary $i \in \{1,\cdots, N\}$ and an arbitrary $y_i \in Y_i(z)$. Let us define ${\hat y(i)}^T=\left({y_1^*}^T,\cdots,{y_{i-1}^*}^T,{y_i}^T,{y_{i+1}^*}^T, \cdots,{y_N^*}^T\right)$. Clearly $\hat y(i) \in Y(z)$ and from \eqref{optimality condition1} we have
	\begin{align}
	0 \leq \left( p_1 \nabla^Tq(y_1^*,\xi_1), \cdots , p_N \nabla^T q(y_N^*,\xi_N) \right)^T(\hat y(i)-y^*)=p_i \nabla^T q(y_i^*,\xi_i) (y_i-y_i^*),
	\end{align}
	implying that $y_i^*$ is an optimal solutoin to $P_{rhs}^i$ (because $p_i\geq 0$). Since this holds for any $i=1,\cdots, N$, we conclude that if $y^*$ solves $P_{lhs}$, then $y_i^*$ solves problem $P_{rhs}$ for $i=1,\cdots, N$. 
	To show the converse, assume $y_i^* \in Y_i(z)$ solves problem $P_{rhs}^i$ for all $i=1,\cdots,K$. By optimality condition we have
	\begin{align}
	\nabla^T q(y_i^*,\xi_i) (y_i-y_i^*) \geq 0, \qquad \hbox{for all } y_i\in Y_i(z).
	\end{align}
	Multiplying both sides of the preceding inequality by $p_i$ and summing  over $i$, we can see that \eqref{optimality condition2} holds, suggesting that $y^*$ solves problem $P_{lhs}$.
	%\end{proof}
	
	%\begin{lemma} \label{lemma 2-stage 1}
	\noindent \textbf{Step2:} We consider the following problem
	\begin{align}\label{minmin} %\label{lemma 2-stage 1}
	\min_{z \in Z} \min_{y \in Y(z)} c(z)+q(y,\xi),
	\end{align}
	%where $ Y(z) \triangleq \{y \in \mathbb{R}_+^m | t(z)+w(y) \leq 0\}$, the set $X\subseteq  \mathbb{R}^n$ is nonempty and convex, functions $c:  \mathbb{R}^n \rightarrow  \mathbb{R}$ and $g:\mathbb{R}^n \rightarrow  \mathbb{R}^\ell$ are convex in $z$, and functions $f:  \mathbb{R}^m \rightarrow  \mathbb{R}$ and $h: \mathbb{R}^m \rightarrow  \mathbb{R}^\ell$ are convex in $y$. Suppose for any $z\in Z$, we have $Y(z)\neq \emptyset$. 
	where $Y(z)$ and $y$ are defined in Step1 and we denote $q(y,\xi) \triangleq \sum_{i=1}^{N} p_i q(y_i,\xi_i)$. We show that the optimal objective value of \eqref{minmin} is equal to the optimal objective value of the following problm 
	\begin{align} \label{minmin2} %\tag{$P_2$}
	\min_{z\in Z, y \in Y(z)} c(z)+q(y,\xi).
	\end{align}
	%\end{lemma}
	%\begin{proof}
	For the problem $\min_{y \in Y(z)} c(z)+q(y,\xi)$, let us define the Lagrangian function 
	\begin{align*}
	L(z,y,\nu)\triangleq c(z)+q(y,\xi)+ \sum_{i=1}^{N} \sum_{j=1}^{J} \nu _{ij}(t_j(z)+w_j(y_i,\xi_i)), \ \hbox{for } z \in Z, y_i \in Y,\ \nu \in \mathbf{R}_+^{I\times J}.
	\end{align*}
	%$L(z,y,\nu)\triangleq c(z)+q(y,\xi)+ \sum_{i=1}^{N} \sum_{j=1}^{J} \nu _{ij}(t_j(z)+w_j(y_i,\xi_i))$, for $z \in Z, y_i \in Y$ and $\nu_{ij} \geq 0$ and $i=1, \cdots, N$ , $j=1=1,\cdots, J$. 
	First, let us assume $z\in Z$ is arbitrary and fixed. Taking supremum with respect to $\nu$ and considering the definition of $Y(z)$ we can write
	\begin{align*}
	\sup_{\nu} L(z,y,\nu)=&\sup_{\nu} \left\{c(z)+q(y,\xi)+ \sum_{i=1}^{N} \sum_{j=1}^{J} \nu _{ij}(t_j(z)+w_j(y_i,\xi_i)) \right\}\\=&\begin{cases}
	c(z)+q(y,\xi) &\mbox{if } y \in Y(z) , \vspace{0.3cm}\\
	+\infty &\hbox{otherwise}.
	\end{cases}
	\end{align*}
	Taking infimum with respect to $y$ and taking into account that $Y(z) \subseteq Y^N$ we have
	\begin{align*}
	\inf_{y \in  Y} \sup_{\nu} L(z,y,\nu)=\inf_{y \in Y(z)} \{c(z)+q(y,\xi)\}.
	\end{align*}
	Now, by taking infimum with respect to $z \in Z$, we obtain
	\begin{align}\label{infinf}
	\inf_{z \in Z} \inf_{y \in  Y} \sup_{\nu} L(z,y,\nu)=\inf_{z \in Z} \inf_{y \in Y(z)} \{c(z)+q(y,\xi)\}.
	\end{align}
	Second, %let us define $x \triangleq (z,y)$ and $X\triangleq \{x | z \in Z, y \in  \mathbf{R}^m\}$. Aslo define $\phi(x)\triangleq c(z)+q(y,\xi)$, and $\theta(x)\triangleq t(z)+w(y)$.
	for any $z \in Z$, we have 
	\begin{align*}
	\sup_{\nu} L(z,y,\nu)&=\sup_{\nu} \left\{c(z)+q(y,\xi)+ \sum_{i=1}^{N} \sum_{j=1}^{J} \nu _{ij}(t_j(z)+w_j(y_i,\xi_i)) \right\}\\&=\begin{cases}
	c(z)+q(y,\xi) &\mbox{if } \sum_{i=1}^{N} \sum_{j=1}^{J} \nu _{ij}(t_j(z)+w_j(y_i,\xi_i)) \leq 0 , \vspace{0.3cm}\\
	+\infty &\hbox{otherwise}.
	\end{cases}
	\end{align*}
	Thus, we have 
	\begin{align*}
	\inf_{z \in Z, y \in  Y} \sup_{\nu}  L(z,y,\nu)=\inf_{z \in Z, y \in  Y,  \sum_{i=1}^{N} \sum_{j=1}^{J} \nu _{ij}(t_j(z)+w_j(y_i,\xi_i)) \leq 0}  \{ c(z)+q(y,\xi)\}.
	\end{align*}
	%From definitions of $x,X,\phi$ and $\theta$, 
	Considering the definition of $Y(z)$, we can write
	\begin{align}\label{infinf2}
	\inf_{z \in Z} \inf_{y \in  Y} \sup_{\nu } L(z,y,\nu)=\inf_{z \in Z, y \in Y(z)} \{c(z)+q(y,\xi)\}.
	\end{align}
	Equations \eqref{infinf} and \eqref{infinf2} imply that the following holds
	\begin{align*}
	\inf_{z \in Z} \inf_{y \in Y(z)} \{c(z)+q(y,\xi)\}=\inf_{z \in Z, y \in Y(z)} \{c(z)+q(y,\xi)\}.
	\end{align*}
	
	Now it can be easily seen  that by combining Step1 and Step2, model \eqref{single stage} can be writen as \eqref{compact two-stage}.
\end{proof}

\subsection{\textbf{Proof of Lemma \ref{two-stage2bilevel}}} \label{proof of lemma two-stage2bilevel}
\begin{proof}
	Since $\xi$ has a distribution with a finite support, we have
	\begin{align}
	\EXP{Q(z,\xi)}=\sum_{i=1}^{N} p_i Q(z,\xi_i).
	\end{align}
	Replacing this in \eqref{single stage}, and taking into account that  all the assumptions for applying Lemma \ref{lemma 2-stage 2} are met, we can write problem \eqref{single stage} in the form of \eqref{compact two-stage}.
	%\begin{align}\label{bilevel two-stage1}
	%\displaystyle \mbox{minimize}_{z,y_1,...,y_I}& \qquad  c(z)  + \sum_{i=1}^{N} p_i  q_i(y_i) \\
	%\mbox{subject to} &\qquad t_i(z) + w_k(y_i) \leq  0, \qquad i=1, \cdots, K, \nonumber \\
	%&\qquad u(z) \leq 0, \ z \in Z, \ y_i \in Y, \qquad  i=1, \cdots, K. \nonumber
	%\end{align}
	%let us define $y^T \triangleq (y_1^T;\cdots;y_N^T)$, $x^T \triangleq (z^T;y^T)$, and set $X \triangleq \left \{x | z \in Z, y \in  \prod_{i=1}^{N}Y \right \}$. Also
	%Let $H: \mathbb{R}^{n\times Nm} \rightarrow \mathbb{R}$ be defined as $H(x)=\sum_{i=1}^{N} p_i \left(\frac{c(z)}{Ip_i}+ q(y_i)\right)$. In addition, we define $F: \mathbb{R}^{n\times Nm} \rightarrow \mathbb{R}^\ell$ as
	Let $H$ and $F$ be as given in the statement of the lemma.
	%\begin{align*}
	%F_i(x)&\triangleq \max \left \{0,t_i(z)+w_i(E_i y) \right \} \qquad \hbox{for } i=1,\cdots, N, \hbox{where }\\ E_i^T &\triangleq \left(0_{m(i-1)}^T;\textbf{1}_{m}^T,0_{m(I-i)}^T\right) \qquad \hbox{for } i=1,\cdots, N.
	%\end{align*}
	Now, we show that $x^*\in X$ solves problem  \eqref{compact two-stage} if and only if it solves \eqref{bilevel two-stage2}.\\
	$(\Rightarrow)$ Assume  $x^*\in X$ solves problem  \eqref{compact two-stage}. Therefore, $x^*$ satisfies all the constraints of problem \eqref{compact two-stage}. Considering the definition of $F$, it is easy to see that $\EXP{F(x^*,\xi)}=0$. In addition, note that the definition of $F$ implies that $\EXP{F(x^*,\xi)}\geq 0$. Thus, we conclude that $x^* \in \argmin_{x \in X } \EXP{F(x,\xi)}$. This implies that $x^*$ is a feasible solution to problem \eqref{bilevel two-stage2}. To show optimality of $x^*$ for \eqref{bilevel two-stage2}, assume there is a feasible solution of problem \eqref{bilevel two-stage2}, $\hat x\neq x^*$ at which  $h(\hat x)< h(x^*)$, where $h(x)\triangleq \EXP{H(x,\xi)}$. Note that by feasibility of $\hat x$, we have $\hat x \in \argmin_{x \in X } \EXP{F(x,\xi)}$. We already know that $\min_{x \in X } \EXP{F(x,\xi)}$ is achieved at zero by $x^*$. Therefore, we have $\EXP{F(\hat x,\xi)}=0$. Using this and taking into account the definition of $F$ again, we have that $\hat x$ is a feasible solution to problem \eqref{compact two-stage}. Taking to account that problems \eqref{compact two-stage} and \eqref{bilevel two-stage2} have identical objective functions $h$, and that $h(\hat x)< h(x^*)$, the optimality of $x^*$ from problem \eqref{compact two-stage} is contradicted. As such, we conclude that if $x^*\in X$ solves problem \eqref{compact two-stage}, then it solves problem \eqref{bilevel two-stage2} as well.

	$(\Leftarrow)$  Now assume that $x^*$ solves problem \eqref{bilevel two-stage2}. Note that from the assumptions of Lemma \ref{lemma 2-stage 2}, problem \eqref{compact two-stage} is feasible. Therefore, there exists $x_0\in X$ that satisfies all the constraints of problem \eqref{compact two-stage}. This means that $\min_{x \in X } \EXP{F(x,\xi)}=0$. So, $\EXP{F(x^*,\xi)}=0$ by its feasibility for \eqref{bilevel two-stage2}. Taking this into account and using the definition of $F$, we can conclude that $x^*$ is a feasible solution for problem \eqref{compact two-stage}. Now, considering that problems \eqref{compact two-stage} and \eqref{bilevel two-stage2} have identical objective functions $h$, we conclude that $x^*$ is also an optimal solution for problem \eqref{compact two-stage}.
\end{proof}

\subsection{\textbf{Proof of Lemma \ref{lem:unique sol for h}}}\label{proof of lem:unique sol for h}
\begin{proof}
	\begin{itemize}
		\item[(a)] By Assumption \ref{assum:properties}(b,c), from convexity of $f$ and strong convexity of $h$, for all $x,y \in X$ we have
		\begin{align*}
		&f(x)+\langle g_f(x), y-x \rangle \leq f(y),\\
		&h(x)+\langle g_h(x), y-x \rangle + \frac{\mu_h}{2}\|x-y\|^2 \leq h(y).
		\end{align*}
		Multiplying the second inequality by the nonnegative parameter $\lambda$ and adding it to the first one, we will obtain strong convexity with parameter $\mu_h$ for the objective function of problem \eqref{regularized form}. Since by Assumption \ref{assum:properties}(a), set $X$ is compact and convex, the uniqueness will follow from subdifferentiability of $f$ in a similar fashion to the proof of Theorem $2.2.6$ of~\cite{Nesterov04}, page 85.
		\item[(b)] From Assumption \ref{assum:properties}(c), $h$ is strongly convex. We need to prove the set $X^*$, on which problem \eqref{def:SL} is defined, is compact and convex. We can write: $X^*=X \cap \{x\in \mathbf{R}^n | f(x)\leq f^*\}$.
		So, $X^*$ is the intersection of two compact and convex sets; i.e., $X$ and the $f^*$ sublevel set of a convex function, $f$. So $X^*$ is also compact and convex. The rest of proof follows from Theorem $2.2.6$ of~\cite{Nesterov04}, page 85.
	\end{itemize}
\end{proof}
\subsection{\textbf{Proof of Lemma \ref{results from convexity of f and strong convexity of h}}} \label{proof of results from convexity of f and strong convexity of h}
\begin{proof} { To show relation \eqref{result from convexity of f}, from convexity of $f$ we have
		\begin{align*}
		f(x_{\lambda_k}^*)+\langle g_f(x_{\lambda_k}^*),x-x_{\lambda_k}^* \rangle \leq f(x) \qquad \hbox{for all } x \in X, \hbox{where}\ g_f(x_{\lambda_k}^*) \in \partial f(x_{\lambda_k}^*),
		\end{align*}
		Similarly, we can write
		\begin{align*}
		f(x_{\lambda_{k-1}}^*)+\langle g_f(x_{\lambda_{k-1}}^*),y-x_{\lambda_{k-1}}^* \rangle \leq f(y) \qquad \hbox{for all } y \in X,\end{align*}
		where $g_f(x_{\lambda_{k-1}}^*) \in \partial f(x_{\lambda_{k-1}}^*)$.
		
		Let $x:=x_{\lambda_{k-1}}^*$ and $y:= x_{\lambda_k}^*$ in the preceding inequalities. Then, relation \eqref{result from convexity of f} is obtained by adding the resulting two inequalities.\\
		Nest, we show relation \eqref{result from strong convexity of h}. From strong convexity of $h$, we can write 
		\begin{align*}
		h(x_{\lambda_k}^*)+\langle g_h(x_{\lambda_k}^*),x-x_{\lambda_k}^* \rangle +\frac{\mu_h}{2} \|x_{\lambda_k}^*-x\|^2 &\leq h(x) \qquad \hbox{for all } x \in X, \end{align*} 
		where $g_h(x_{\lambda_k}^*) \in \partial h(x_{\lambda_k}^*)$,\\ 
		\begin{align*}
		h(x_{\lambda_{k-1}}^*)+\langle g_h(x_{\lambda_{k-1}}^*),y-x_{\lambda_k}^* \rangle +\frac{\mu_h}{2} \|x_{\lambda_{k-1}}^*-y\|^2 &\leq h(y) \qquad \hbox{for all } y \in X, \end{align*}
		where $g_h(x_{\lambda_{k-1}}^*) \in \partial h(x_{\lambda_{k-1}}^*)$.
		Now we let $x:=x_{\lambda_{k-1}}^*$ and $y:= x_{\lambda_k}^*$ in these inequalities. Relation \eqref{result from strong convexity of h} will be obtained by adding the resulting relations.
	}
\end{proof}

\subsection{\textbf{Proof of Proposition \ref{prop:xk_estimate}}} \label{proof of prop:xk_estimate}
\begin{proof}{
		\begin{itemize}
			\item[(a)] Let $k \geq 1$ be fixed. Since {$x_{\lambda_k}^*$ is the minimizer of problem \eqref{regularized form} at $\lambda=\lambda_k$. from optimality conditions we have}
			\begin{align*}
			\langle g_f(x_{\lambda_k}^*) + \lambda_k g_h(x_{\lambda_k}^*),x-x_{\lambda_k}^* \rangle \geq 0 \qquad  \hbox{for all } x \in X.
			\end{align*}
			Similarly, from the optimality conditions of problem \eqref{regularized form} at $\lambda=\lambda_{k-1}$, we can write
			\begin{align*}
			\langle g_f(x_{\lambda_{k-1}}^*) + \lambda_{k-1}g_h(x_{\lambda_{k-1}}^*),y-x_{\lambda_{k-1}}^* \rangle \geq 0 \qquad \hbox{for all } y \in X.
			\end{align*}
			Let $x:=x_{\lambda_{k-1}}^*$ and $y:= x_{\lambda_k}^*$ in the preceding two inequalities. By adding these relations we obtain
			\begin{align*}
			\langle g_f(x_{\lambda_k}^*) + \lambda_k g_h(x_{\lambda_k}^*),x_{\lambda_{k-1}}^*-x_{\lambda_k}^* \rangle + \langle g_f(x_{\lambda_{k-1}}^*) + \lambda_{k-1}g_h(x_{\lambda_{k-1}}^*),x_{\lambda_k}^*-x_{\lambda_{k-1}}^* \rangle \geq 0.
			\end{align*}
			Therefore, by rearranging the left-hand side we obtain
			\begin{align}\label{proof1.1}
			\langle g_f(x_{\lambda_k}^*) - g_f(x_{\lambda_{k-1}}^*) ,x_{\lambda_{k-1}}^*-x_{\lambda_k}^* \rangle + \langle \lambda_k g_h(x_{\lambda_k}^*) - \lambda_{k-1}g_h(x_{\lambda_{k-1}}^*),x_{\lambda_{k-1}}^*-x_{\lambda_k}^* \rangle \geq 0.
			\end{align}
			Note that from relation \eqref{result from convexity of f} in Lemma \ref{results from convexity of f and strong convexity of h}, we have \\ $\langle g_f(x_{\lambda_k}^*) - g_f(x_{\lambda_{k-1}}^*) ,x_{\lambda_{k-1}}^*-x_{\lambda_k}^* \rangle \leq 0$. Thus, from \eqref{proof1.1} we obtain
			\begin{align}\label{proof1.2}
			\langle \lambda_k g_h(x_{\lambda_k}^*) - \lambda_{k-1}g_h(x_{\lambda_{k-1}}^*),x_{\lambda_{k-1}}^*-x_{\lambda_k}^* \rangle \geq 0.
			\end{align}
			Adding and subtracting $\langle \lambda_k g_h(x_{\lambda_{k-1}}^*),x_{\lambda_{k-1}}^*-x_{\lambda_k}^* \rangle$, it follows that
			\begin{align*}
			& \langle \lambda_k g_h(x_{\lambda_k}^*) - \lambda_k g_h(x_{\lambda_{k-1}}^*),x_{\lambda_{k-1}}^*-x_{\lambda_k}^* \rangle \\&+ 	\langle \lambda_k g_h(x_{\lambda_{k-1}}^*) - \lambda_{k-1} g_h(x_{\lambda_{k-1}}^*),x_{\lambda_{k-1}}^*-x_{\lambda_k}^* \rangle \geq 0.
			\end{align*}
			Therefore, by rearranging the terms we have
			\begin{align*}
			(\lambda_k- \lambda_{k-1}) \langle g_h(x_{\lambda_{k-1}}^*),x_{\lambda_{k-1}}^*-x_{\lambda_k}^* \rangle \geq \lambda_k\langle g_h(x_{\lambda_{k-1}}^*) - g_h(x_{\lambda_k}^*),x_{\lambda_{k-1}}^*-x_{\lambda_k}^* \rangle.
			\end{align*}
			Combining the preceding inequality with \eqref{result from strong convexity of h}, we obtain
			\begin{align*}
			(\lambda_k- \lambda_{k-1}) \langle g_h(x_{\lambda_{k-1}}^*),x_{\lambda_{k-1}}^*-x_{\lambda_k}^* \rangle \geq \mu_h \lambda_k \|x_{\lambda_k}^* - x_{\lambda_{k-1}}^*\|^2.
			\end{align*}
			By definition of dual norm $\|\cdot\|_*$, since $\|a\|_* = \sup_{\|b\| \leq1}{\langle a,b\rangle} $, we have $\|a\|_* \geq \langle a,b\rangle$ for $\|b\| \leq1$, so
			\begin{align*}
			|\lambda_k- \lambda_{k-1}|\|g_h(x_{\lambda_{k-1}}^*)\|_*\|x_{\lambda_{k-1}}^*-x_{\lambda_k}^*\| \geq \mu_h \lambda_k \|x_{\lambda_k}^* - x_{\lambda_{k-1}}^*\|^2.
			\end{align*}
			From Assumption \ref{assum:properties}(e) and Remark \ref{assumptionandJensen}, we have $\|g_h(x)\|_* \leq C_H$ for all $g_h \in \partial h(x)$ and $x \in X$. Thus,
			\begin{align*}
			|\lambda_k- \lambda_{k-1}|C_H\|x_{\lambda_{k-1}}^*-x_{\lambda_k}^*\| \geq \mu_h \lambda_k \|x_{\lambda_k}^* - x_{\lambda_{k-1}}^*\|^2.
			\end{align*}
			Let us assume $ x_{\lambda_k}^* \neq x_{\lambda_{k-1}}^*$. Then
			\begin{align*}\|x_{\lambda_k}^*-x_{\lambda_{k-1}}^*\|\leq \frac{C_H}{\mu_h}\left	|1-\frac{\lambda_{k-1}}{\lambda_k} \right|.
			\end{align*}
			If $x_{\lambda_k}^* = x_{\lambda_{k-1}}^*$, then $\|x_{\lambda_k}^*-x_{\lambda_{k-1}}^*\|=0 \leq \frac{C_H}{\mu_h}\left	|1-\frac{\lambda_{k-1}}{\lambda_k} \right|$. Therefore the desired inequality holds.
			\item[(b)] Let $x^*$ be the minimizer of function $f$ over the set $X$ and $x_{\lambda_k}^*$ be the minimizer of \eqref{regularized form} at $\lambda=\lambda_k$. From the optimality conditions for this problem we have
			\begin{align*}
			\langle g_f(x_{\lambda_k}^*) + \lambda_k g_h(x_{\lambda_k}^*),x-x_{\lambda_k}^* \rangle \geq 0 \qquad \hbox{for all } x \in X.
			\end{align*}
			Similarly, we can write for any arbitrary $x^* \in X^*$
			\begin{align*}
			\langle g_f(x^*),y-x^* \rangle \geq 0 \qquad \hbox{for all } y \in X.
			\end{align*}
			Let $x:=x^*$ and $y:= x_{\lambda_k}^*$ in the preceding inequalities. Then by adding them we obtain
			\begin{align*}
			\langle g_f(x_{\lambda_k}^*) + \lambda_k g_h(x_{\lambda_k}^*)-g_f(x^*),x^*-x_{\lambda_k}^* \rangle \geq 0.
			\end{align*}
			Rearranging the inequality, we have
			\begin{align*}
			\langle \lambda_k g_h(x_{\lambda_k}^*),x^*-x_{\lambda_k}^* \rangle \geq \langle g_f(x^*) - g_f(x_{\lambda_k}^*),x^*-x_{\lambda_k}^* \rangle.
			\end{align*}
			By convexity of $f$, we know that $\langle g_f(x^*) - g_f(x_{\lambda_k}^*),x^*-x_{\lambda_k}^* \rangle \geq 0$. So from the preceding relation we obtain
			\begin{align} \label{proof b1}
			\langle g_h(x_{\lambda_k}^*),x^*-x_{\lambda_k}^* \rangle \geq 0.
			\end{align}
			By convexity of $h$, for all $x,y \in X$ we can also have 
			\begin{align*}
			h(x) \geq h(y)+ \langle g_h(y),x-y \rangle,
			\end{align*}
			where $g_h(y)\in \partial h(y)$. Now by letting $x:=x^*$ and $y:= x_{\lambda_k}^*$, we have
			\begin{align}\label{proof b2}
			h(x^*) \geq h(x_{\lambda_k}^*)+ \langle g_h(x_{\lambda_k}^*),x^*-x_{\lambda_k}^* \rangle.
			\end{align}
			Comparing \eqref{proof b1} and \eqref{proof b2}, we obtain that $	h(x^*) \geq h(x_{\lambda_k}^*)$. Note that $x_h^* \in X^*$ implying that $x_h^*$ is an optimal solution to problem \eqref{def:firstlevel}. Therefore, for $ x^* := x_h^*$ we obtain
			\begin{align}\label{proof b3}
			h(x_h^*) \geq h(x_{\lambda_k}^*) \qquad \hbox{for all } k \geq 0.
			\end{align}
			Now consider the sequence $\{x_{\lambda_k}^*\}$. We know $x_{\lambda_k}^* \in X$. Since $X$ is bounded, $\{x_{\lambda_k}^*\}$ has at least one accumulation point. Let $\hat{x}^*$ be an accumulation point of  $\{x_{\lambda_k}^*\}$. From optimality of $x_{\lambda_k}^*$ we have
			\begin{align*}
			f(x_{\lambda_k}^*) + \lambda_k h(x_{\lambda_k}^*)\leq f(x) + \lambda_k h(x) \qquad \hbox{for all } x \in X.  
			\end{align*}
			Taking limits along the convergent subsequence from both sides of the preceding inequality for all $x \in X$ and considering the assumption that $\lambda_k \rightarrow0$ and continuity of $f$ and $h$, we have
			\begin{align*}
			f(\hat{x}^*) \leq f(x) \qquad \hbox{for all } x \in X.  
			\end{align*}
			Thus, $\hat{x}^* \in X^*$ implying that any arbitrary accumulation point of $\{x_{\lambda_k}^*\}$ is an optimal solution to problem \eqref{def:firstlevel}.\\
			Now let $\{x_{\lambda_{k_i}}^*\}$ be an arbitrary convergent subsequence of $\{x_{\lambda_k}^*\}$ with accumulation point $\tilde x^*$. Taking limits of \eqref{proof b3} along $\{x_{\lambda_{k_i}}^*\}$ we have $	h(x_h^*) \geq h(\tilde x^*).$			
			But from Lemma \ref{lem:unique sol for h} we know that $x_h^*$ is the unique optimal solution of problem \eqref{def:SL}. Thus, $\tilde x^*=x_h^* $. Therefore, all the limit points of $\{x_{\lambda_k}^*\}$ converge to $x_h^*$. Hence, $\lim_{k \to \infty} {x_{\lambda_k}}$ exists and is equal to $x_h^*$. 
	\end{itemize} }
\end{proof}
\subsection{ \textbf{Proof of Lemma \ref{lemma general conv lemma}}} \label{proof of lemma general conv lemma}
\begin{proof}
	Let $e_k=\EXP{\nu_k}$ for all $k$. We prove this lemma by applying induction. At first we need to show that the result is true for $k=0$. By definition of $\tau$ we have $\tau \geq \frac{\EXP{\nu_1} \alpha_0}{\beta_0}$. So $\EXP{\nu_1}=e_1 \leq \frac{\beta_0}{\alpha_0} \tau$, and the result holds for $k=1$. Let us now assume that $e_k \leq \frac{\beta_{k-1}}{\alpha_{k-1}} \tau$. We need to show that $e_{k+1} \leq \frac{\beta_k}{\alpha_k} \tau$. By taking expectations from both sides of \eqref{conv lemma ineq} we have
	\begin{align*}
	e_{k+1} \leq(1-\alpha_k)e_k+\beta_k.
	\end{align*}
	By the inductive assumption and that $\alpha_k \leq 1$ we obtain
	\begin{align*}
	e_{k+1} \leq(1-\alpha_k)\frac{\beta_{k-1}}{\alpha_{k-1}} \tau+\beta_k.
	\end{align*}
	From the assumption $\frac{\beta_{k-1}}{\alpha_{k-1}} \leq \frac{\beta_k}{\alpha_k}(1+ \rho \alpha_k)$ and the preceding relation, we have
	\begin{align*}
	e_{k+1} \leq(1-\alpha_k)\frac{\beta_k}{\alpha_k}(1+ \rho \alpha_k) \tau+\beta_k.
	\end{align*}
	So we can write
	\begin{align*}
	e_{k+1} \leq\frac{\beta_k}{\alpha_k}\tau-(1-\rho)\beta_k\tau -\rho \alpha_k\beta_k\tau+\beta_k.
	\end{align*}
	Rearranging the terms, we obtain
	\begin{align*}
	e_{k+1} \leq \frac{\beta_k}{\alpha_k}\tau+\beta_k(-\tau(1-\rho)-\rho \tau \alpha_k+1) \leq  \frac{\beta_k}{\alpha_k}\tau+\beta_k\underbrace{(-\tau(1-\rho)+1)}_\text{\hbox{Term1}}.
	\end{align*}
	Since $\tau\geq \frac{1}{1-\rho}$ and $0<\rho<1
	$, Term1 is always nonpositive. So, $e_{k+1}\leq \frac{\beta_k}{\alpha_k} \tau$ and the proof is complete.
\end{proof}
\subsection{\textbf{Proof of Proposition \ref{prop condition for sequences}}} \label{proof of prop condition for sequences}
\begin{proof} \noindent (i) In the following, we show that conditions of Assumption \ref{assumptions for conv} are satisfied.
	\begin{itemize}
		\item[(a)] Note that $a,b>0$ and $\gamma_0\lambda_0 \leq \frac{L_\omega}{\mu_h}$ are sufficient for the sequences to be non-increasing and for Assumption \ref{assumptions for conv}(a) to be satisfied.
		\item[(b)] From the definition of $\gamma_k$ and $\lambda_k$ and that $a+b<1$, we have
		\begin{align*}
		\sum_{k=0}^{\infty}\gamma_k\lambda_k=\sum_{k=0}^{\infty}\frac{\gamma_0}{(k+1)^a}\frac{\lambda_0}{(k+1)^b}=\sum_{k=0}^{\infty}\frac{\gamma_0\lambda_0}{(k+1)^{a+b}}=\infty.
		\end{align*}
		\item[(c)] We have
		\begin{align*}
		\sum_{k=0}^{\infty}\frac{1}{\gamma_k\lambda_k}\left(\frac{\lambda_{k-1}}{\lambda_k}-1\right)^2=\sum_{k=0}^{\infty}\frac{(k+1)^{a+b}}{\gamma_0\lambda_0} \left(\underbrace{\left(1+\frac{1}{k}\right)^b-1}_\text{\hbox{Term1}}\right)^2.
		\end{align*}
		By Taylor series, we can write Term1$=(1+b/k+ b(b-1)/2k^2 + b(b-1)(b-2)/6k^3 + \cdots)-1={\cal
			O}\left(k^{-1}\right)$. So the preceding equality will be
		\begin{align*}
		\sum_{k=0}^{\infty}\frac{1}{\gamma_k\lambda_k}\left(\frac{\lambda_{k-1}}{\lambda_k}-1\right)^2=\sum_{k=0}^{\infty}\frac{(k+1)^{a+b}}{\gamma_0\lambda_0}{\cal
			O}\left(k^{-2}\right)=\sum_{k=0}^{\infty} {\cal O}\left(k^{-(2-(a+b))}\right).
		\end{align*}
		Since $a+b<1$, we have $2-(a+b)>1$. Therefore, the preceding series is summable implying that Assumption \ref{assumptions for conv}(c) is satisfied.
		\item[(d)] We have $\gamma_k^2 = \gamma_0^2/(k+1)^{2a} ={\cal O}\left(k^{-2a}\right)$. Since $a>0.5$, $\gamma_k^2$ is summable and Assumption \ref{assumptions for conv}(d) is met. 
		\item[(e)] In a similar fashion to part (c), we have
		\begin{align*}
		\lim_{k\to \infty} \frac{1}{\gamma_k^2\lambda_k^2}\left(\frac{\lambda_{k-1}}{\lambda_k}-1\right)^2=\lim_{k\to \infty} \frac{(k+1)^{2a+2b}}{\gamma_0\lambda_0}{\cal O}\left(k^{-2}\right).
		\end{align*}
		Since $a+b<1$, this limit goes to zero which implies that Assumption \ref{assumptions for conv}(e) is satisfied. 
		\item[(f)] The last condition in Assumption \ref{assumptions for conv} holds due to $a>b$.
	\end{itemize}	
	
	\noindent (ii) Next, we verify conditions of Assumption \ref{assumptions for general conv}.
	\begin{itemize}
		\item[(a)] The proof for this condition is identical to the proof given for Assumption \ref{assumptions for conv}.
		\item[(b)] By the analysis from part (1)(c) we have
		\begin{align*}
		\frac{1}{\gamma_k^3\lambda_k}\left(\frac{\lambda_{k-1}}{\lambda_k}-1\right)^2=\frac{(k+1)^{3a+b}}{\gamma_0\lambda_0}{\cal O}\left(k^{-2}\right)={\cal O}\left(k^{3a+b-2}\right).
		\end{align*}
		Note that since $3a+b<2$, the preceding term is bounded above by a constant. Therefore, there are constants $B_1$ and $k_1$ for which Assumption \ref{assumptions for general conv}(b) is satisfied.
		\item[(c)] We need to show that there are $0<\rho<1$ and $k_2$ such that the following holds:
		\begin{align*}
		\underbrace{\frac{\gamma_{k-1}\lambda_k}{\lambda_{k-1}\gamma_k} -1}_\text{\hbox{Term1}} \leq \rho\frac{\mu_h}{2L_\omega}\gamma_k\lambda_k \qquad \hbox{for all } k \geq k_2.
		\end{align*}
		Replacing $\gamma_k$ and $\lambda_k$ in Term1, we have Term1$=\left(1+1/k\right)^{a-b} -1 = {\cal O}\left(k^{-1}\right)$.
		By applying the same analysis as part (1)(c) and that $a>b$. Multiplying and dividing Term1 by $\rho\frac{\mu_h}{2L_\omega}\gamma_k\lambda_k$ in which $\rho$ can be any constant between 0 and 1, we obtain
		\begin{align*}
		\hbox{Term1}= \rho\frac{\mu_h}{2L_\omega}\gamma_k\lambda_k \frac{{\cal O}\left(k^{-1}\right)}{\rho\frac{\mu_h}{2L_\omega}\gamma_k\lambda_k}&=\rho\frac{\mu_h}{2L_\omega}\gamma_k\lambda_k \frac{{\cal O}\left(k^{-1}\right)}{\rho\frac{\mu_h}{2L_\omega}\gamma_0\lambda_0(k+1)^{-a-b}}\\&=\rho\frac{\mu_h}{2L_\omega}\gamma_k\lambda_k {\cal O}\left(k^{-1+a+b}\right).
		\end{align*}
		Since $a+b<1$, ${\cal O}\left(k^{-1+a+b}\right)$ converges to zero. So, there exists an integer $k_2$ such that for any  $k\geq k_2$ we have Term1 $\leq \rho\frac{\mu_h}{2L_\omega}\gamma_k\lambda_k$.  Thus, Assumption \ref{assumptions for general conv}(c) is satisfied. 
		\item[(d)] Condition (d) of Assumption \ref{assumptions for general conv} follows due to $a>b$.
	\end{itemize}
\end{proof} 
\end{document}